\documentclass[reqno, 12pt]{amsart}

\usepackage[algoruled,lined,noresetcount,norelsize]{algorithm2e}

\usepackage{graphicx}
\usepackage{amsmath,amssymb,amsthm}
\usepackage{mathrsfs}
\usepackage{mathabx}\changenotsign
\usepackage{dsfont}
\usepackage{xcolor}
\usepackage[backref]{hyperref}
\hypersetup{
	colorlinks,
	linkcolor={red!60!black},
	citecolor={green!60!black},
	urlcolor={blue!60!black}
}
\usepackage[T1]{fontenc}
\usepackage{lmodern}
\usepackage[babel]{microtype}
\usepackage[english]{babel}

\usepackage{todonotes}

\linespread{1.3}
\usepackage{geometry}
\geometry{left=22mm,right=22mm, top=25mm, bottom=25mm}

\numberwithin{equation}{section}
\numberwithin{figure}{section}

\newtheorem{theorem}{Theorem}

\newtheorem{claim}[theorem]{Claim}

\usepackage{tikz}
\usepackage{xcolor}
\usepackage{framed}
\usepackage{mathtools}
\usetikzlibrary{positioning}
\usetikzlibrary{decorations.pathreplacing,calligraphy}

\newcommand{\C}{\mathcal{C}}
\newcommand{\B}{\mathcal{B}}
\newcommand{\F}{\mathcal{F}}

\renewcommand{\P}{\mathcal{P}}
\renewcommand{\epsilon}{\varepsilon}

\newtheorem{thm}{Theorem}[section]
\newtheorem{prop}{Proposition}[section]

\newtheorem{lem}{Lemma}[section]

\newenvironment{claimproof}[1]{\par\noindent\textit{Proof of the claim.} #1}{\hfill $\blacksquare$\vspace{3pt}}

\title{Creating triangles in Constructor-Blocker games}

\author{Chloé Boisson, Yannick Mogge, Aline Parreau, Théo Pierron}

\thanks{This research was supported by the ANR project P-GASE (ANR-21-CE48-0001-01).}

\begin{document}
	
	\maketitle

	\begin{abstract}
		Generalized Tur\'an problems investigate the maximization of the number of certain structures (typically edges) under some constraints in a graph. We study a game version of these problems, the Constructor-Blocker game. We mainly focus on the case where Constructor tries to maximize the number of triangles in her graph, while forbidding her to claim short paths or cycles. We also study a variant of this game, where we impose some planarity constraints on Constructor instead of forbidding certain subgraphs. For all games studied, we obtain (precise) asymptotics or upper and lower bounds.
	\end{abstract}

	\section{Introduction}
	
	\subsection{The Turán problem}
	
	One of the most studied problems in extremal graph theory is the so called Tur\'an problem. Given a family $\mathcal{F}$ of graphs, a graph is called \emph{$\F$-free}, if it does not contain any graph $F \in \F$ as a subgraph. We can now ask the following question: what is the maximum number of edges an $\F$-free graph on $n$ vertices can have?
	
	This number is called the \emph{extremal number} $ex(n,\mathcal{F})$, or simply $ex(n,F)$ if $\F$ consists of a single graph $F$. In \cite{mantel1907vraagstuk}, Mantel determined this quantity for a triangle $F=K_3$, and Tur\'an generalized this result in \cite{turan1941egy} to all complete graphs. Since this first result, the extremal number 
	has been widely studied for variety of graphs and families of graphs; see for instance \cite{Simonovits1997} for a survey.
	
	\medskip
	
	A next step in understanding $\mathcal{F}$-free graphs can be the study of their subgraphs. Given a graph $H$, denote by $ex(n,H,\F)$ the maximal number of copies of $H$ an $n$-vertex $\F$-free graph can have. The classical Tur\'an problem corresponds to the case where $H = K_2$. Although some mentions of this problem in particular cases had appeared earlier (for instance in \cite{Erdös1969}), the study of $ex(n,H,\F)$ really intensified in the last decade, starting with Alon and Shikhelman in \cite{alon_many_2016}, and since then many beautiful results have been proven (see e.g.~\cite{luo_maximum_2018}).

	\subsection{Constructor-Blocker games}
	
	In this paper, we study a game version of the generalized Tur\'an problem, which was first introduced by Patk\'os, Stojakovi\'c, and Vizer~\cite{patkos_constructor-blocker_2022} under the name \emph{Constructor-Blocker game}. The rules of this game are as follows. Given two fixed graphs $H$ and $F$, two players, called \emph{Constructor} and \emph{Blocker}, alternately claim an unclaimed edge of the complete graph $K_n$. Constructor can only claim an edge in such a way that her graph remains $F$-free, while Blocker can claim any unclaimed edge without further restrictions. The game ends when Constructor cannot claim any more edges or when all edges have been claimed. The \emph{score} of the game is the number of copies of $H$ in Constructor's graph at the end of the game. Constructor aims to maximize the score while Blocker aims to minimize it. We denote the score of the game by $g(n,H,F)$, assuming optimal play by both players and Constructor starting the game. For a family $\F$ of graphs, we similarly define $g(n,H,\F)$.
	
	
	\medskip
	
	To get some intuition of the relation between $ex(h,H,\F)$ and, $g(n,H,\F)$, observe that the trivial bound $g(n,H,\F) \leq ex(n,H,\F)$ always holds. Moreover, in the case that $H$ is an edge, then Constructor can secretly pick an $\F$-free graph that maximizes the number of edges, and during the game she will be able to claim at least half of the edges of this graph. Therefore, we have $\frac12 ex(n,K_2,\F) \leq g(n,K_2,\F) \leq ex(n,K_2,\F)$, which tells us that $ex(n,K_2,\F)$ and $g(n,K_2,\F)$ are of the same order of magnitude. Actually, the asymptotics of $g(n,K_2,\F)$ only depend on the chromatic number of $\F$ and were determined in the context of Avoider-Enforcer games in~\cite{Balogh_Avoider}.
	
	So far, the quantity $g(n,H,F)$ has mainly been studied in two papers (\cite{balogh_constructor-blocker_2024,patkos_constructor-blocker_2022}).
	In \cite{patkos_constructor-blocker_2022}, Patk\'os, Stojakovi\'c, and Vizer deal with the cases where $F$ is a star and $H$ is a smaller star or a tree, and where both $H$ and $F$ are paths. 
	In \cite{balogh_constructor-blocker_2024}, Balogh, Chen, and English extended these results to the cases where $H$ is a complete graph and $F$ is a graph of high chromatic number, when both graphs are odd cycles, and also obtained bounds for $g(n,K_3,C_4)$.

	\subsection{Connection to other games}
	
	Let us now consider similarities and differences between Constructor-Blocker games and other types of games. Constructor-Blocker games are part of the broader category of \emph{positional games} (for a more comprehensive overview see e.g.~\cite{hefetz_positional_2014}), which are perfect information games between two players, who alternately claim elements of a finite set $X$ (called the \emph{board} of the game) equipped with a family $\mathcal{W}\subset \mathcal{P}(X)$ of \emph{winning sets}. The two most common settings are \emph{Maker-Maker games} and \emph{Maker-Breaker games}. In the Maker-Maker convention, the winner is the first player completing a winning set. In the Maker-Breaker convention, Maker wins if she is able to claim all of the elements of a winning set at the end of the game, Breaker wins otherwise. This also means, that in Maker-Maker games there can be a draw, but in Maker-Breaker games there has to be a winner.
	
	As an example for a positional game, we can consider the \emph{Maker-Breaker clique game}. Here, the board is the edge set of a complete graph ($X =E(K_n)$) and the winning sets are the edge sets of all $k$-cliques. Thanks to Ramsey's theorem, we know that, if $n$ is large enough, there must be a $k$-clique either in Maker's or in Breaker's graph at the end of the game. We can deduce, that Maker (since she is the first player) has a winning strategy, thanks to the concept of \emph{strategy stealing}. Indeed, one of the two players has to have a winning strategy, and if Breaker would have a winning strategy then Maker could apply this strategy (after playing her first move arbitrarily) and thus win. Therefore, we know that for $n$ large enough this game is a Maker's win. To analyse this game further, we can now consider either the smallest $n$ such that this game is a Breaker's win, or the largest number of $k$-cliques Maker can actually make.
	
	\medskip
	
	Going in the second direction, Bagan et al.~\cite{bagan_incidence_2024} introduced \emph{scoring positional games}. The two players alternately claim elements of the board until there are none left. In the Maker-Maker convention, the players get  a point whenever they complete a winning set and the score of the game is the difference between the number of points of the first and the second player. In the Maker-Breaker convention, the score is the number of winning sets completed by Maker. During the course of the game, Maker wants to maximize the number of winning sets, while Breaker wants to minimize it.
	
	\medskip
	
	We can relate Constructor-Blocker games very well to scoring positional games. Constructor takes over the role of Maker, Blocker takes over the role of Breaker, the board is the edge set of the complete graph $K_n$, and the winning sets are the subgraphs of $K_n$ isomorphic to $H$. But we also have to take into account that Constructor has further restrictions on how she is allowed to claim edges, since she has to avoid claiming a graph isomorphic to $F$.
	
	\medskip
	
	Let us also mention two other game settings that can transcribe this aspect of the Constructor-Blocker game -- that is, forbidding some subgraphs -- very well, but that do not take the maximization of the number of copies of $H$ into account.
	
	The first one we want to mention are \emph{Avoider-Enforcer games}. In an Avoider-Enforcer game, instead of winning sets we have a collection of losing sets. Avoider wins if she does not complete a losing set during the course of the game, Enforcer wins otherwise. Now let us look at a Constructor-Blocker game with $H = K_2$ and $F$ being some graph, then Constructor wants to maximize the number of edges in her graph, while at the same time not creating $F$ as a subgraph. The score of the game actually corresponds to the length of an Avoider-Enforcer game (where the losing sets are all subgraphs of $K_n$ isomorphic to $F$) before Avoider loses. As a sidenote, let us also mention, that this is actually the method how Balogh and Martin~\cite{Balogh_Avoider} determined $g(n,K_2,F)$ before Constructor-Blocker games were even introduced.
	
	The second game we want to mention are \emph{saturation games}. In a saturation game, both players alternately claim edges of a graph, but these edges do not belong to a specific player. Instead, both players are not allowed to claim some subgraph with the union of their edges. The first player wants to maximize the length of the game -- that is, the number of claimed edges when no player can claim any more edges -- while the second player wants to minimize it (see~\cite{Seress_1992} for an example).

	\subsection{Our results}
	
	We further extend the results in~\cite{balogh_constructor-blocker_2024} and~\cite{patkos_constructor-blocker_2022}, mainly focusing on the asymptotics of $g(n,K_3,\F)$ for several choices of $\F$ and comparing it to $ex(n,K_3,\F)$. Our first result considers the case where we relax the constraints on Constructor by not forbidding any subgraph at all (while maximizing the number of triangles). This setting actually corresponds to a regular scoring positional game. We get the following result: 
	
	
	\begin{thm}\label{thm : g(n,K_3,emptyset)}
		We have
		$$
		g(n,K_3,\emptyset) = (1+o(1)) \frac{n^3}{48} \, .
		$$
	\end{thm}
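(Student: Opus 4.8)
\textit{Proof proposal.} The plan is to establish the bounds $g(n,K_3,\emptyset)\le(1+o(1))\frac{n^3}{48}$ and $g(n,K_3,\emptyset)\ge(1-o(1))\frac{n^3}{48}$ separately, both built on one bookkeeping identity. Since here Constructor is never blocked, the game ends only when every edge of $K_n$ is claimed, so Blocker's final graph is $G_B=K_n\setminus G_C$, where $G_C$ is Constructor's. Writing $\operatorname{codeg}_C(u,v):=|N_{G_C}(u)\cap N_{G_C}(v)|$ and $t(\cdot)$ for the number of triangles, counting the cherries (paths on three vertices) of $G_C$ in two ways gives
$$
3\,t(G_C)=\sum_{w\in V}\binom{d_{G_C}(w)}{2}-\sum_{uv\in E(G_B)}\operatorname{codeg}_C(u,v)=\sum_{uv\in E(G_C)}\operatorname{codeg}_C(u,v).
$$
Constructor claims $\lceil\binom n2/2\rceil=(1+o(1))\frac{n^2}{4}$ edges, so $\sum_w d_{G_C}(w)=(1+o(1))\frac{n^2}{2}$ and, by convexity, $\sum_w\binom{d_{G_C}(w)}{2}\ge(1-o(1))\frac{n^3}{8}$ always. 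If $G_C$ behaves ``randomly'' (density $\tfrac12$), every codegree is $(1+o(1))\frac n4$ and $|E(G_B)|=(1+o(1))\frac{n^2}{4}$, so the subtracted term is $(1+o(1))\frac{n^3}{16}$ and $3\,t(G_C)=(1+o(1))\frac{n^3}{16}$, i.e.\ the claimed $\frac{n^3}{48}$. Both directions amount to making this heuristic rigorous, one inequality at a time.

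For the upper bound I would have Blocker play a \emph{balancing} strategy whose goal is that at the end $\operatorname{codeg}_C(u,v)\le(\tfrac14+o(1))n$ for every pair $u,v$. Granting this, the last display yields $3\,t(G_C)=\sum_{uv\in E(G_C)}\operatorname{codeg}_C(u,v)\le|E(G_C)|\cdot(\tfrac14+o(1))n\le(1+o(1))\frac{n^3}{16}$, hence $t(G_C)\le(1+o(1))\frac{n^3}{48}$. To keep codegrees down, Blocker reacts to each Constructor move $vw$ (which raises $\operatorname{codeg}_C(v,y)$ for $y\in N_{G_C}(w)$ and $\operatorname{codeg}_C(w,y)$ for $y\in N_{G_C}(v)$) by claiming an edge that most slows the growth of the currently most dangerous codegree, while also ensuring no vertex degree exceeds $(\tfrac12+o(1))n$; an attractive alternative is to exhibit a single pairing of $E(K_n)$ all of whose transversals are quasirandom of density $\tfrac12$ and let Blocker follow it.

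For the lower bound I would symmetrically have Constructor play so that $\operatorname{codeg}_C(u,v)\ge(\tfrac14-o(1))n$ for every pair; then $3\,t(G_C)=\sum_{uv\in E(G_C)}\operatorname{codeg}_C(u,v)\ge|E(G_C)|\cdot(\tfrac14-o(1))n\ge(1-o(1))\frac{n^3}{16}$. Constructor first guarantees $d_{G_C}(v)\ge(\tfrac12-o(1))n$ for all $v$ (a standard Maker-type degree game), and then, on top of that minimum degree, balances the quantities $e_{G_C}(S,T)$ (the number of edges of $G_C$ between disjoint sets $S,T$): because she can always secure at least half of the edges of $K_n$ lying in any region she decides to contest, Blocker cannot push any $e_{G_C}(S,T)$ below $(\tfrac12-o(1))|S||T|$, and a (weak) quasirandomness criterion then delivers the codegree lower bound.

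The crux — identical in spirit for both players — is that this quasirandomness cannot be enforced by purely local firefighting: summing codegrees shows that as many as $\Theta(n^2)$ pairs can legitimately carry codegree near the average $\tfrac n4$, so Blocker cannot merely chase the few worst pairs, nor can Constructor rescue a few starved ones; each player must globally prevent (resp.\ create) local density deviations. Turning this into a strategy whose analysis loses only a $(1+o(1))$ factor — whether via a potential function tracking $\sum_w\binom{d_{G_C}(w)}{2}$ together with the full codegree profile, or via an explicit quasirandom pairing — is where the real difficulty lies; I expect the pairing route, if it exists, to give the cleanest proof, with the verification that every transversal is quasirandom being the single hard lemma.
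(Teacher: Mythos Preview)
Your proposal is not a proof but an outline that stops precisely at the hard step. You correctly reduce both bounds to the statement ``each player can force every codegree to be $(1\pm o(1))n/4$'', and you correctly note that this cannot be achieved by local firefighting; but you then leave the construction and analysis of such a strategy open, and you yourself flag the pairing route as conditional (``if it exists''). In fact the pairing idea is problematic: for a fixed pairing there are $2^{\binom{n}{2}/2}$ transversals, and asking that \emph{every} transversal be quasirandom (or even just have at most $(1+o(1))n^3/48$ triangles) is a worst-case statement over exponentially many choices, not a concentration statement; it is far from clear that any pairing has this property, and you give no argument for it. Similarly, the degree-balancing / codegree-balancing strategies are only described by their goals, not by rules one could analyse.

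The paper bypasses all of this with a one-line application of the scoring version of the Erd\H{o}s--Selfridge criterion (Theorem~\ref{thm : ES}). Take the board $E(K_n)$, winning sets all triangles, and observe that two edges lie in at most one common triangle, so $\ell=1$. The theorem then gives directly
\[
g(n,K_3,\emptyset)\ \ge\ \binom{n}{3}2^{-3}-\frac{1}{8}\binom{n}{2}\ =\ (1+o(1))\frac{n^3}{48}
\]
when Constructor starts, and $g(n,K_3,\emptyset)\le \binom{n}{3}2^{-3}$ when Blocker starts; swapping who moves first costs at most $n-2$ triangles, which is negligible. So both bounds fall out of the same potential function $\sum_{T}2^{-|T\setminus \text{Maker}|}$ over triangles $T$, with no need to control degrees or codegrees at all. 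If you want to rescue your approach, the natural fix is not to chase quasirandomness but to adopt exactly this potential; your cherry-counting identity is correct but is the wrong invariant to track during play.
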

	
	Next, we consider the number of triangles while forbidding short paths. In \cite{patkos_constructor-blocker_2022}, Patk\'os, Stojakovi\'c, and Vizer considered $g(n,K_3,P_5)$ and showed that $g(n,K_3,P_5) = \frac{n}{4} - o(n)$. We extend the length of the forbidden path by one and get the following result:
	
	%
	%
	
	\begin{thm}\label{thm : g(n,K_3,P_6)}
		We have
		$$
		g(n,K_3,P_6) = (1 + o(1)) \frac{n}{2} \, .
		$$
	\end{thm}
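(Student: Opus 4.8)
The value $(1+o(1))\frac n2$ is, up to the error term, realised by a vertex‑disjoint union of diamonds $K_4^-$, and equivalently by one large \emph{friendship graph} (a star $K_{1,2m}$ together with a perfect matching on its leaves): both are $P_6$‑free and have $\sim n/2$ triangles. So the plan is two‑sided: show that Constructor can essentially build such a graph, and that Blocker can essentially force Constructor into one. I expect the upper bound to be the harder half.

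\emph{Lower bound.} Constructor will aim to build a vertex‑disjoint union of friendship graphs; note every subgraph of such a graph is $P_6$‑free, so all intermediate positions are legal. The naive single‑centre attempt fails: if Constructor commits to one centre $v$, Blocker spends all his moves on edges at $v$, leaving Constructor with only $\sim n/2$ leaves and hence $\sim n/4$ triangles. The remedy is to run $p=p(n)\to\infty$ with $p=o(n)$ centres $v_1,\dots,v_p$ at once, and, whenever a vertex $x$ is first touched, to assign it as a leaf to \emph{some} centre $v_i$ with $xv_i$ still free. To deny $x$ entirely Blocker must claim all $p$ edges $xv_1,\dots,xv_p$; with $O(n)$ moves he can do this for only $O(n/p)=o(n)$ vertices, so Constructor attaches $(1-o(1))n$ leaves, kept roughly balanced among the $v_i$. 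In a second phase, inside the leaf set $L_i$ of each centre Constructor plays a Maker--Breaker perfect‑matching game, every matching edge closing a triangle through $v_i$; since Blocker's total interference is $O(n)$ spread over $p$ centres, he cannot stop her from matching $(1-o(1))|L_i|$ leaves in each, giving $(1-o(1))\sum_i|L_i|/2=(1-o(1))n/2$ triangles. The care needed here is in the adaptive assignment (keeping centres balanced while Blocker starves some of them), the choice of $p$, and the robustness of the matching subgames.

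\emph{Upper bound.} Blocker must stop Constructor from exploiting the fact that $P_6$‑free graphs can be far denser in triangles than $n/2$ ($K_5$ has $2|V|$ triangles, books $B_k$ have $\sim|V|$). The structural input is: if $G$ is $P_6$‑free and contains \emph{no diamond} $K_4^-$, then any two triangles in a common component share a vertex (two vertex‑disjoint triangles in one component, joined by a shortest path, already force a $P_6$, and a third triangle avoiding the common vertex of a bowtie does too), so each component is a subgraph of a friendship graph, giving at most $\frac{|C|-1}2$ triangles and $\le\frac{n-1}2$ overall. Moreover $P_6$‑freeness forces $G[N(x)]$ to contain no $P_3$ disjoint from an edge, so it is a matching, a star, a triangle, or a triangle with pendants; the cases producing more than $|V|/2$ triangles all live on a bounded number of vertices — $K_4$, the book $B_3$, the fan over a $P_4$, and finitely many others. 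Blocker's strategy is therefore to block reactively the completion of any graph in this fixed finite family of ``over‑dense'' configurations — when a move leaves Constructor one edge from such a configuration, take that edge — while arguing that the simultaneous threats created in a single move can give rise to only $o(n)$ realised over‑dense pieces. Then the final graph is $P_6$‑free and within $o(n)$ edges of being diamond‑free, so the lemma yields $(1+o(1))n/2$ triangles.

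\emph{Main obstacle.} The hard part is the Blocker side: it is \emph{not} enough to bound local edge density (e.g. $B_3$ has only $7$ edges on $5$ vertices yet ratio $3/5$), so one needs the explicit finite list of forbidden over‑dense subgraphs together with a genuine argument that simultaneous near‑completions cannot accumulate — identifying that list cleanly and bounding the number of over‑dense pieces Blocker fails to kill is where the real work lies, together with making the Constructor‑side bookkeeping (balancing $p$ centres against an adversary) fully rigorous.
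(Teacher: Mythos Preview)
Your lower-bound plan is sound and differs from the paper only in organisation: the paper builds its stars \emph{sequentially} rather than in parallel, always choosing the next centre to be an isolated vertex of minimum Blocker-degree (so each star gets at least $n/(3\ln n)$ leaves), and then greedily matches the leaves. Your parallel $p$-centre scheme would also work but carries more bookkeeping for no real gain.

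The upper bound has a genuine gap. The assertion that the over-dense $P_6$-free graphs ``all live on a bounded number of vertices'' is false: the book $B_k$ (an edge $v_1v_2$ together with $k$ common neighbours) is $P_6$-free for every $k$, has $k$ triangles on $k+2$ vertices, and is thus over-dense for all $k\ge 3$. Even read charitably as ``the \emph{minimal} over-dense configurations are bounded'' (true: every over-dense $P_6$-free component contains a $B_3$), your conclusion still does not follow. Being ``within $o(n)$ edges of diamond-free'' says nothing about the triangle count, because a single edge --- the spine $v_1v_2$ of a book --- can lie in arbitrarily many triangles. If Blocker lets even one $B_3$ slip through and never revisits that component, Constructor can grow it into $B_k$ with $k\sim|C|$ triangles; a constant number of such components already gives $(1-o(1))n$ triangles, not $(1+o(1))n/2$.

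The paper therefore does \emph{not} try to keep $\mathcal C$ diamond-free or $B_3$-free. Blocker always answers in Constructor's current component, and once some edge $v_1v_2$ there lies in two triangles he switches to a pairing strategy: whenever Constructor plays $uv_i$, Blocker takes $uv_{3-i}$, so every new vertex becomes a pendant leg rather than a book page. Combined with a direct edge-count for components of size $\le 5$ and a short analysis of the moves before the first diamond forms, this yields at most $|C|/2$ triangles in every component and hence at most $n/2$ in total. The idea missing from your plan is exactly this: you must \emph{control the growth} of books, not prevent their birth.
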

	
	We also consider the cases, where instead of a path we forbid Constructor from claiming small stars (basically restricting the maximum degree in her graph). For the case that $F = S_4$, we get the following result:
	
	\begin{thm}\label{thm : g(n,K_3,S_4)}
		We have
		$$
		g(n,K_3,S_4) = (1+o(1)) \frac{n}{3} \, .
		$$
	\end{thm}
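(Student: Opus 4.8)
The plan is to prove $g(n,K_3,S_4)\le \lfloor n/3\rfloor+O(1)$ and $g(n,K_3,S_4)\ge (1-o(1))\,n/3$. Since forbidding $S_4$ means exactly that Constructor's graph always has maximum degree at most $3$, two triangles in it that share a vertex must in fact share an edge (otherwise that vertex would have degree at least $4$). Hence for the upper bound it suffices that Blocker force Constructor's triangles to be pairwise vertex-disjoint, which caps their number by $\lfloor n/3\rfloor$.

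\emph{Upper bound.} Call a length-two path with centre $y$ and ends $x,z$ in Constructor's current graph a \emph{live cherry} if $xz$ is unclaimed and $x,z$ both have Constructor-degree at most $2$, so that Constructor could legally claim $xz$ and finish the triangle $xyz$. Blocker's strategy: whenever Constructor's last move created at least one live cherry, claim the closing edge of one of them; otherwise play arbitrarily. One checks: (i) a single Constructor move creates at most two live cherries, and it creates two only when the move raises a degree-$2$ vertex $u$ to degree $3$ or joins two isolated edges into a $P_4$, in which cases the two threatened triangles overlap in an edge; (ii) therefore Constructor finishes a triangle only through such a double threat, and the triangle she finishes always contains a vertex that is saturated the moment it is completed; (iii) once a triangle $T$ is completed, no vertex of $T$ lies in a later triangle, since that would force Constructor to close a cherry that Blocker has already blocked or to add an edge at the saturated vertex of $T$. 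A short case analysis turns (i)--(iii) into: Constructor's triangles are pairwise vertex-disjoint.

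\emph{Lower bound.} Constructor builds triangles one at a time on ``fresh'' vertices (incident to no claimed edge), turning the double-threat phenomenon to her advantage. Base gadget: take fresh vertices $c,a_1,a_2$, claim $ca_1$ then $ca_2$ (Blocker is forced to block $a_1a_2$), then claim $cv$ for a fresh $v$; now both cherries with centre $c$ and ends $\{a_1,v\}$, resp.\ $\{a_2,v\}$, are live, Blocker kills only one, and Constructor closes the other to get a triangle on $\{c,a_i,v\}$. This consumes four fresh vertices and leaves one ``stranded'' vertex of degree $1$ (the arm whose closing edge was blocked). To cut the amortised cost to three, each later gadget recycles a stranded vertex $s$: Constructor again plays $ca_1,ca_2$ first (Blocker blocks $a_1a_2$), and only afterwards claims $cs$. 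Now the two live cherries created are the ones with centre $c$ and ends $\{a_1,s\}$ and $\{a_2,s\}$ — both incident to $s$ — so whichever one Blocker blocks, the triangle Constructor completes contains $s$. Thus $s$ is absorbed into a triangle while exactly one fresh arm becomes the new stranded vertex, so the pool of stranded vertices stays of size one and each gadget costs only three fresh vertices per triangle. Running gadgets until fresh vertices run out yields $(1-o(1))\,n/3$ triangles.

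The crux is this recycling step. Using the stranded vertex as an arm, or extending into it before the fresh cherry $a_1$-$c$-$a_2$ is in place, leaves one of the final two live cherries missing $s$; Blocker blocks that one and wastes $s$, and the resulting bookkeeping gives only about $2n/7$ triangles. Playing the fresh cherry first removes Blocker's choice. Minor points to handle: Blocker cannot sabotage a gadget in advance because $c,a_1,a_2,v$ are all picked fresh after Blocker's previous move (a small reserve of stranded vertices handles a pre-emptive claim of $cs$), and fresh vertices survive almost to the end since Constructor makes only $O(n)$ moves in total.
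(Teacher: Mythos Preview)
Your upper bound has a genuine gap: claim (iii) is false as stated, and the strategy ``close one of the newly created live cherries'' does not prevent diamonds. Here is a concrete run. Constructor plays $12$, $34$, $56$ (no cherries; Blocker plays far away each time), then $13$ (new live cherries closing $23$ and $14$; Blocker closes $23$), then $35$ (vertex $3$ now has degree $3$; new live cherries closing $15$ and $45$; Blocker closes $15$), then $45$ (triangle $345$; the only new live cherry closes $46$; Blocker closes $46$), then $14$ (triangle $134$). Constructor now has the diamond on $\{1,3,4,5\}$. The point is that the cherry $4$--$3$--$1$ created in round~4 survived: Blocker attended only to \emph{new} cherries and never got back to it, and closing it later does not require touching the saturated vertex of $345$. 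Even if you let Blocker close \emph{any} live cherry, the choice in round~5 between $15$ and $45$ is decisive: closing $15$ loses, closing $45$ wins. Your strategy gives no rule for this choice. The paper's proof hinges on exactly such a priority system (preferring the closing edge with an endpoint of lower Constructor-degree), and the long case analysis there is doing real work that your ``short case analysis'' does not.

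Your lower bound idea is different from the paper's (which first builds many isolated cherries, then chains them) and is plausible in spirit, but also under-argued. The step ``Blocker is forced to block $a_1a_2$'' is not forced: after you reveal $c$ and $a_1$ by playing $ca_1$, Blocker can immediately play $a_1s$ (the stranded vertex $s$ is known), and if he follows with $a_1a_2$ after your $ca_2$, then after $cs$ only the cherry $a_2$--$c$--$s$ is live, Blocker kills it with $a_2s$, and this gadget produces no triangle at all. You gesture at a reserve of stranded vertices, but making that rigorous requires tracking how many stranded vertices Blocker can poison per gadget and showing the pool does not drain; as written this is an assertion, not an argument. The paper's two-stage approach sidesteps this by making the cherries first (when there is nothing for Blocker to target) and only then linking them.
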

	
	We also considered the case $F = S_5$, but for this case the bounds that we obtained are not tight. Here, our result is the following:
	
	\begin{thm}\label{thm : g(n,K_3,S_5)}
		It holds that
		$$
		(1+ o(1)) \frac{2n}{3} \leq g(n,K_3,S_5) \leq (1+o(1)) n \, .
		$$
	\end{thm}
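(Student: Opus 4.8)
Since the Constructor‑optimal score lies, by the trivial bound $g(n,K_3,S_5)\le ex(n,K_3,S_5)$, below $ex(n,K_3,S_5)=(1+o(1))\,2n$ (attained by disjoint $K_5$'s, which are $S_5$‑free), the statement asks us to beat this trivial bound down to $(1+o(1))n$ for Blocker and to exhibit $(1+o(1))\tfrac{2n}{3}$ triangles for Constructor. The two bounds come from unrelated strategies, which I would develop as follows.

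\textbf{Lower bound.} The plan is to have Constructor build many pairwise vertex‑disjoint copies of a fixed $S_5$‑free gadget $G_0$ of triangle density exactly $2/3$. A convenient choice is the graph on six vertices $\{x,y,a,b,c,d\}$ in which $x$ and $y$ are each joined to all of $a,b,c,d$ and the only further edges are $ab$ and $cd$; this has maximum degree $4$ (so is $S_5$‑free) and its triangles are exactly $xab,xcd,yab,ycd$, i.e.\ four triangles on six vertices. Partition $V(K_n)$ into $\lfloor n/6\rfloor$ blocks of size six and let Constructor run an independent sub‑strategy inside each block, interleaving her moves across blocks; since each vertex lies in a single block and attains degree at most $4$ there, her graph stays $S_5$‑free automatically. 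Inside a block she plays a threat‑based strategy: she always keeps an ``open'' triangle alive --- a pair $uv,uw$ of her edges with $vw$ unclaimed --- so that Blocker must either destroy it, in which case she re‑routes (using that the two special edges $ab,cd$ of $G_0$ can be realised through several different pairs), or let her close it. A finite analysis of Blocker's replies should show she completes all four triangles of a block after investing only a bounded number of moves there, so Blocker must also invest a comparable number to spoil the block; an amortised count over the whole board then gives that all but $o(n)$ blocks are finished, for a total of $(1+o(1))\cdot\tfrac{4}{6}n=(1+o(1))\tfrac{2n}{3}$ triangles. I expect the delicate point to be the block sub‑strategy itself: one must fix the order of Constructor's moves and her response to each possible Blocker reply so that one gadget is genuinely forced up to $o(1)$ loss, and then rule out that Blocker amortises his interference across blocks faster than Constructor builds them.

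\textbf{Upper bound.} The plan is to have Blocker guarantee that at the end of the game all but $o(n)$ vertices of $K_n$ lie in at most three triangles of Constructor's graph; counting each triangle at its three vertices then yields $3\cdot(\#\text{triangles})\le 3n+o(n)$, hence at most $(1+o(1))n$ triangles. Since Constructor's graph has maximum degree at most $4$, a vertex $v$ lies in more than three triangles only if $\deg_C(v)=4$ and its Constructor‑neighbourhood spans at least four of its $\binom42=6$ possible edges, i.e.\ $v$ together with its neighbours contains $K_5$ minus at most two edges. Blocker's strategy is therefore to ``seal'' vertices before they reach this state: once a vertex approaches the danger zone --- say it has Constructor‑degree $3$ and already lies in two triangles --- Blocker claims one still‑unclaimed edge among its current or potential neighbours so as to cap the triangles through it at three, breaking ties by a fixed priority order. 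The step I expect to be hardest is the potential \emph{overload}: a single edge $uv$ claimed by Constructor raises the triangle count at every common Constructor‑neighbour of $u$ and $v$ simultaneously, so a purely reactive Blocker could fall behind. I would counter this by making Blocker proactive --- sealing vertices as soon as they get close, not when they are about to turn bad --- together with an accounting argument that producing $k$ truly dangerous vertices requires Constructor to have placed $\Omega(k)$ edges in their vicinity; as Blocker makes as many moves as Constructor, he can then seal all but $o(n)$ of the dangerous vertices.
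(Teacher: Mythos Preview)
Both halves of your proposal have genuine gaps, and the paper's arguments are structurally different from what you sketch.

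\textbf{Lower bound.} The fixed-partition approach is the weak point. If Constructor commits in advance to blocks of six vertices, Blocker can simply respond inside whichever block Constructor just played in. By the ``same-component'' counting (Lemma~\ref{lemma : play in the same cc}), this already limits Constructor to at most nine edges per block, one short of the ten your gadget requires; more seriously, the paper shows (Lemma~\ref{lemma : g(n,K_4,S_5)}) that Blocker can prevent any $K_4$ in $\C$, so the denser fallback configurations you might hope to use are also out of reach. Your sentence ``a finite analysis of Blocker's replies should show she completes all four triangles'' is exactly the step that fails. The paper's construction is entirely different and, crucially, adaptive: Constructor first reuses the $S_4$ strategy (Proposition~\ref{prop : lower bound g(n,K_3,S_4)}) to build a single long chain of $\frac{n}{3}-o(n)$ triangles, each containing a vertex $s_i$ of degree~$2$. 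In a second stage she pairs up triangles $i,j$ whose vertex sets are not yet connected by any Blocker edge, plays $s_is_j$, and then uses a simple edge-pairing to guarantee one of $\{s_il_j,s_ir_j\}$ and one of $\{s_jl_i,s_jr_i\}$, adding two triangles per pair while keeping $\Delta(\C)\le 4$. A pigeonhole argument on Blocker's at most $2n$ edges shows all but $O(\sqrt{n})$ triangles get paired. The feature your proposal lacks is that Constructor selects her targets \emph{after} seeing Blocker's graph, which is what lets the amortisation succeed.

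\textbf{Upper bound.} Your sealing idea points in the right direction but misses the organising reduction. The paper does not attempt to cap vertices one by one; instead, Blocker's entire effort goes into ensuring that no $K_4$ ever appears in $\C$ (Lemma~\ref{lemma : g(n,K_4,S_5)}), via a priority-based ``block the dangerous diamond'' rule together with a case analysis showing that two dangerous diamonds cannot be created by a single Constructor move. Once $K_4$ is excluded, any vertex lying in four triangles is forced to be the centre of a $W_4$; the paper then enumerates the five possible local structures around such a $W_4$ (types A--E in Figure~\ref{fig : W_4}), rules one out by edge-counting, and shows by a partitioning argument over the remaining types that $\frac{1}{3}\sum_v t(v)\le n$. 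Your overload concern is precisely why a direct per-vertex sealing strategy is hard to make rigorous; the $K_4$-prevention step sidesteps it by converting the problem into a static structural count at the end of the game rather than a dynamic resource race.
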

	
	Both of these theorems result from analysing all possible cases, which unfortunately did not help us to generalize our results to larger stars. However, we present an idea for a lower bound on $g(n,K_3,S_k)$ when $k$ goes to infinity (while growing not too fast):
	
	\begin{thm}\label{thm : g(n,K_3,S_k)}
		If $k(n) \underset{n\to \infty}{\longrightarrow}\infty$ and $k(n) = o(\sqrt{n})$, then 
		$$g
		(n,K_3,S_{k(n)+1}) \geq (1+o(1)) \frac{n\left(k(n)\right)^2}{48} \, .
		$$
	\end{thm}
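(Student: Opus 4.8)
The plan is to reduce the problem to Theorem~\ref{thm : g(n,K_3,emptyset)} by having Constructor play the unrestricted triangle game on many vertex-disjoint cliques. Write $k=k(n)$ and $t=\lfloor n/k\rfloor$, and fix pairwise disjoint sets $V_1,\dots,V_t\subseteq V(K_n)$ with $|V_i|=k$ for each $i$ (this uses $tk\le n$ vertices, discarding fewer than $k$). Constructor commits to claiming only edges lying inside one of the $V_i$. Since each $V_i$ spans a $K_k$, every vertex of her graph then has degree at most $k-1$, so her graph never contains $S_{k+1}=K_{1,k}$, and her final score is the total number of triangles she builds inside the classes $V_1,\dots,V_t$. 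Inside each class $V_i$ she follows an optimal Constructor strategy for the game realising $g(k,K_3,\emptyset)$; by Theorem~\ref{thm : g(n,K_3,emptyset)} and the assumption $k(n)\to\infty$, each such class, if played out honestly, is worth $(1+o(1))\tfrac{k^3}{48}$ triangles, so over the $t\sim n/k$ classes Constructor would collect $(1+o(1))\tfrac{n}{k}\cdot\tfrac{k^3}{48}=(1+o(1))\tfrac{nk^2}{48}$, as desired.

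What requires care is the interaction \emph{across} the classes, since on each of his turns Blocker chooses inside which $V_i$ (if any) he answers: he may waste moves on edges between classes, or dump many moves into a class that Constructor is not currently developing. Constructor therefore needs a scheduling rule — for instance, always advancing the non-full class in which the largest number of edges has so far been claimed — and two facts must be verified. First, advancing a single class $V_i$ with her single-board strategy while Blocker is free to ``pass'' (move outside $V_i$) or to have pre-seeded $V_i$ with a few edges still leaves her with $(1+o(1))\tfrac{k^3}{48}$ triangles there: passes only help her, and a single already-claimed edge of $K_k$ can only affect the $\le k-2$ triangles through it. Second — and this is the crux — one must show that Blocker cannot profitably ``deny'' classes: completely filling a class with his own edges costs $\binom{k}{2}$ moves, whereas spending those same moves contesting classes reduces Constructor's total by the much larger amount $\Theta(k^3)$ (the gap between the uncontested value $\binom{k}{3}$ of a class and its game value $\sim\tfrac{k^3}{48}$), so Blocker's best behaviour is essentially to contest every class fairly, which caps Constructor's loss to lower order. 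The hard part will be making this precise — ideally via a potential function over the classes, and perhaps via a robustified form of Theorem~\ref{thm : g(n,K_3,emptyset)} quantifying how slowly Constructor's guarantee degrades when edges are removed from the board or Blocker is given a head start on it; this is where I expect essentially all of the difficulty to sit.

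Granting this, we obtain $g(n,K_3,S_{k(n)+1})\ge t\cdot(1+o(1))\tfrac{k^3}{48}=(1+o(1))\tfrac{nk^2}{48}$. The hypothesis $k(n)\to\infty$ is what makes Theorem~\ref{thm : g(n,K_3,emptyset)} usable inside a class with a genuinely vanishing relative error, and what makes the $O(k)$-per-class corrections negligible against the main term $\tfrac{nk^2}{48}$; the hypothesis $k(n)=o(\sqrt n)$ ensures that $t\gg k$, i.e.\ that there are many classes relative to their size, so that the boundary effects of the scheduling argument — Constructor's opening move, the $<k$ leftover vertices, the parity of who opens each class, and any slack in the ``Blocker cannot afford to deny classes'' estimate — are absorbed into the $o(1)$. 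Everything apart from the cross-class analysis of the second paragraph is bookkeeping on top of Theorem~\ref{thm : g(n,K_3,emptyset)}.
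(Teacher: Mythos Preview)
Your high-level plan --- partition the vertices into $\sim n/k$ groups of size $k$ and invoke Theorem~\ref{thm : g(n,K_3,emptyset)} inside each --- is exactly the paper's approach, and you have correctly isolated the cross-class scheduling as the only genuine issue. But you have left that step open and are anticipating something hard (a potential function, a robustified version of Theorem~\ref{thm : g(n,K_3,emptyset)}). The actual fix is much simpler, and it is the one idea you are missing: Constructor's scheduling rule is purely \emph{reactive}. Whenever Blocker's last move landed inside some class, Constructor answers in that same class with the Erd\H{o}s--Selfridge move; whenever Blocker played between classes (or once a class is full), Constructor freely advances/starts another class. Under this rule every class is literally an independent game in which Constructor is never more than one move behind, so Theorem~\ref{thm : ES} applies per class with only an $O(k)$ correction. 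Your ``Blocker cannot afford to deny classes'' worry evaporates: Blocker can never accumulate a head start anywhere, because every in-class move of his is answered immediately. Your rule ``advance the class with the most claimed edges'' may also work, but proving it does is needlessly harder.

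Cosmetically the paper differs from your write-up: rather than a fixed partition into $K_k$'s, Constructor adaptively builds a star $K_{1,k}$ on $2k{+}1$ fresh vertices (chosen isolated in both $\C$ and $\B$), then plays the unrestricted triangle game on the $k$ leaves reactively; the $k$ moves spent building a star let Blocker seed at most $k$ edges among its leaves, an $O(k^2)$ loss. Your clique partition is arguably cleaner since it skips the star-building phase. In the paper the hypothesis $k=o(\sqrt{n})$ is genuinely used to guarantee that Constructor can keep finding $2k{+}1$ mutually $\B$-isolated vertices (the counting argument needs $\binom{\ell}{2k+1}>n\binom{\ell}{2k-1}$ with $\ell\ge 3k\sqrt{n}$); in your fixed-partition variant with the reactive rule it appears to be unnecessary, since the per-class error is $O(k)$ and the total error $O(n)$ is already $o(nk^2)$ once $k\to\infty$.
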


	\medskip

	Lastly, we wonder what would happen in the Constructor-Blocker game if, instead of forbidding some subgraph in Constructor's graph, we would require it to remain planar. As bounded-degreeness (corresponding to forbidding some stars), planarity is among the classical properties studied in a graph. A graph is \textit{planar} if it can be drawn on a plane without any edges crossing. For a graph to be planar it cannot have certain graphs as a subgraph (for instance a $K_5$), but planarity cannot be entirely described by the absence of a finitite family of subgraphs. Therefore, this question cannot be stated in terms of Constructor-Blocker games as defined earlier, and we need to extend our framework. More precisely, we will define two new games: the \emph{planar Constructor-Blocker game} (PCB) and the \emph{embedded Constructor-Blocker game} (ECB).
	
	\medskip
	
	In the planar Constructor-Blocker game, Constructor can only claim edges such that her graph remains planar, while Blocker has no further restrictions. This means, that at any time of the game, there has to exist an embedding of $\C$ into the plane with no crossing edges. The embedding of Constructor's graph into the plane is not fixed at the beginning of the game. Similar to before, we will assume optimal play by both players, and the score of the game remains the number of triangles in $\C$ at the end of the game, which we will denote by $g_{\text{PCB}}(n,K_3)$.
	
	\medskip
	
	We obtained that Constructor can asymptotically build as many triangles as possible under the planarity constraint.
	
	\begin{thm}\label{thm : PCB}
		We have
		$$
		g_{\mathrm{PCB}}(n,K_3) = (1+o(1)) 3n \, .
		$$
	\end{thm}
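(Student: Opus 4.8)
We treat the two bounds separately; the upper bound is easy and the lower bound is the real content.

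\medskip
\textbf{Upper bound.} Since Constructor's graph $\C$ is planar at the end of the game, $g_{\mathrm{PCB}}(n,K_3)$ is at most the maximum number of triangles in an $n$-vertex planar graph. It is classical that this maximum equals $3n-8$ for $n\ge 3$: in a plane embedding at most $2n-4$ triangles bound a face, and the number of separating triangles is $O(n)$. Hence $g_{\mathrm{PCB}}(n,K_3)\le 3n-8=(1+o(1))3n$.

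\medskip
\textbf{Lower bound.} We describe a strategy for Constructor guaranteeing $(1-o(1))3n$ triangles. Fix $T=T(n)$ with $T\to\infty$ and $T=o(n)$. Constructor maintains $T$ pairwise vertex-disjoint planar triangulations (``strips''), the $j$-th grown from a $K_4$ on fresh vertices and carrying two designated high-degree vertices $h_j,h_j'$, called \emph{hubs}. Her basic operation is a \emph{stacking move}: pick an unused vertex $v$, a hub $h$, and a triangular face $hxy$ of its strip with $vh,vx,vy$ all unclaimed, and over three turns claim $vh,vx,vy$; this creates the three new triangles $vhx$, $vhy$, $vxy$ and turns the face $hxy$ into three faces incident to $h$. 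The strips stay vertex-disjoint, so $\C$ remains planar throughout; and since $\C$ is a disjoint union of triangulations, the neighbours of each hub $h$ form a cycle (the link of $h$) whose consecutive pairs are exactly the faces at $h$. Performing $\approx n$ stacking moves would leave Constructor with a triangulation having $3n-8$ triangles, so it suffices to show she can carry this out in the presence of Blocker.

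\medskip
Blocker's replies are of two types. A \emph{spoil} is claiming $vx$ or $vy$ after Constructor has played $vh$. Constructor counters as follows: having seen Blocker claim some $vb$, she picks a face $hxy$ with $b\notin\{x,y\}$ and with $b$ not a link-neighbour of $x$ (at least $\deg(h)-3$ choices for $x$, abundant once $h$ is large), plays $vx$ --- which already creates the triangle $vhx$ --- and then, since Blocker's next claim blocks at most one of the two faces $hxy,hxy'$ on the edge $hx$ while neither of $y,y'$ equals $b$, she completes the surviving one. Thus each vertex is placed in three turns with all three of its new edges lying in triangles, so spoils cost Constructor nothing (apart from the first $O(1)$ stacks at each hub, when $\deg(h)$ is still tiny; these $O(T)=o(n)$ events are harmless). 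A \emph{hub-kill} is claiming $hv$ for every unused $v$, after which Constructor cannot stack at $h$; she then simply uses another hub, a stacking move at $h_j$ being unaffected by Blocker's moves at $h_j'$ or at other strips' hubs. Because Constructor never wastes a move, she claims only the $\le 3n-6$ edges of her final (near-)triangulation, so the game lasts $O(n)$ rounds and Blocker has only $O(n)$ moves --- whereas killing a hub while $k$ vertices remain unused costs him about $n-k$ moves. A short accounting then shows that Blocker can force at most $o(n)$ of Constructor's vertices to be left unstacked: killing hubs early is too expensive, and killing them late leaves him too few moves. Hence Constructor stacks $(1-o(1))n$ vertices, gaining three triangles each, and finishes with $(1-o(1))3n$ triangles, matching the upper bound.

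\medskip
The main obstacle is to make that final accounting rigorous: one must control simultaneously, throughout the game, the number of faces at each surviving hub, the number of unused vertices that are still ``clean'' (incident to few edges already claimed by Blocker), and the number of elapsed rounds, and verify that Blocker --- whose budget is only linear in $n$ and who must pay one move per spoil and roughly one move per remaining unused vertex for each hub he kills --- can never both run Constructor out of stackable faces and keep enough hubs alive. Equivalently, the delicate point is that Blocker's total number of moves is only a constant times $n$, so he can neither prolong the game nor locally exhaust a constant fraction of Constructor's choices.
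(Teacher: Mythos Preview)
Your outline has the right architecture---many hubs so that Blocker's $O(n)$ budget cannot isolate them all, and at each hub a three-move routine that gains three triangles whatever Blocker does---and this is essentially the paper's plan. But what you have written is a sketch, and you say so yourself: the ``short accounting'' is deferred and then called ``the main obstacle'' in your last paragraph. That accounting \emph{is} the proof. Two specific points where the sketch breaks: (i) your spoil-handling only reacts to Blocker's single reply $vb$, ignoring edges $vw_i$ that Blocker may have planted earlier with $w_i$ in the link of $h$; you allude to ``clean'' vertices but never show that Constructor can always pair a clean vertex with a live hub, and with growing links this simultaneous control is not automatic; (ii) you start from $T$ disjoint $K_4$'s without saying how Constructor builds even one---Blocker can kill any prescribed $K_4$ attempt by closing cherries.

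The paper avoids both difficulties by keeping its structures of \emph{bounded} size (each is a $W_4$, $W_5$, or $F_6$, so five to seven vertices) and declaring $u$ ``safe from $S$'' when $u$ has no Blocker edge to any vertex of $S$. The three-move routine (their ``Crucial claim'') then needs only safety, and the accounting becomes a two-line double count: each Blocker edge kills at most two $(u,S)$ pairs (since a vertex lies in at most two structures), Blocker has fewer than $3n$ edges, but the number of pairs exceeds $6n$ once $|\mathcal{S}|=n/105$ and at least $631$ isolated vertices remain. The initial $F_6$ is built explicitly from the pendant-leg lemma. Your growing-link hubs can probably be pushed through, but the bounded-structure trick is precisely what converts the idea into a proof.
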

	
	\medskip
	
	In the embedded Constructor-Blocker game, we impose even more restrictions on Constructor. When the game starts, we already fix an embedding of $K_n$ in the plane -- actually, we place the $n$ vertices as the $n^{th}$ roots of unity in the complex plane. Constructor is only allowed to claim edges inside of the unit disk, and her edges are not allowed to cross. She still aims to maximize the number of triangles in her graph. We will denote the score of this game by $g_{\text{ECB}}(n,K_3)$.
	
	\medskip
	
	For the embedded Constructor-Blocker game, we were able to provide both a lower and a non-trivial upper bound, but those do not match.
	
	\begin{thm}	\label{thm : ECB}
		It holds that
		$$
		(1+o(1))\frac{n}{2} \leq g_{\mathrm{ECB}}(n,K_3) \leq (1+o(1))\frac{2n}{3} \, .
		$$
	\end{thm}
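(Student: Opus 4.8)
The plan is to prove the two inequalities separately. Both rely on one structural fact: since the vertices lie in convex position and $\C$ is crossing-free, $\C$ is outerplanar, drawn with its unique outerplanar embedding, and any triangle of $\C$ bounds a region free of vertices and edges of $\C$, hence is a face. So the number of triangles in $\C$ equals its number of triangular faces, which is at most its total number of bounded faces, namely $|E(\C)| - n + c(\C)$, where $c(\C)$ is the number of components. In particular, if Constructor owns $b$ of the $n$ polygon sides and $m = |E(\C)| - b$ chords, then $c(\C) \le n-b$ and hence $t \le m+1$; and since no triangulation of the convex $n$-gon omits any of its sides, Blocker owning even one side already forces $t \le n-3$.

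For the upper bound, the approach is a Blocker strategy combined with the above counting. Blocker steals polygon sides and short chords (the $\{i,i+2\}$) so as to force $\C$ to have many large faces, while a charging argument caps the number of triangular faces it can still have: restricting $t \le |E(\C)| - n + c(\C)$ to the triangle-carrying components of $\C$ (each with at least $3$ vertices) and feeding in that Blocker has kept $\C$ from becoming dense, one wants each triangular face to be \emph{paid for} by about $\tfrac32$ of Constructor's edges, which should give $t \le \tfrac23 n + o(n)$. The delicate point is that Blocker can only remove one edge at a time and can never forbid a region, so forcing large faces requires him to anticipate which chords Constructor intends to use; squeezing the constant down to exactly $\tfrac23$ is where I expect the real work to lie, presumably a case analysis, and is why the bound need not be tight.

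For the lower bound, one first observes that a bounded-size gadget is useless, since Blocker simply answers Constructor inside it; so the strategy uses $\Theta(n/\ell)$ pairwise disjoint arcs of $\ell = \ell(n)$ consecutive vertices, with $\ell \to \infty$, $\ell = o(n)$, and aims for $(\tfrac12 - o(1))\ell$ triangles per arc. Constructor, moving first, works one arc at a time: she lays down its boundary path together with a closing chord, turning it into a sub-polygon, and then repeatedly completes a triangle by adding a short chord, re-routing to a neighbouring short chord whenever Blocker steals the one she wanted; the moment Blocker starts damaging the current arc she switches to a still-intact one, so Blocker can never spend his stealing pre-emptively. The count is amortised within an arc: completing a triangle costs Constructor one move, whereas killing the current triangle together with its local re-routings costs Blocker at least two, and, the arc being long, Constructor always has a legal move advancing it; balancing these gives the ratio $\tfrac12$, hence $(\tfrac12 - o(1))n$ overall.

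The main obstacle, in both directions, is quantitative robustness: for the lower bound, proving that Constructor can always keep a cheaply-completable triangle available against Blocker's stealing, so that the factor $\tfrac12$ really goes through; and for the upper bound, tuning Blocker's single-edge responses to the constant $\tfrac23$. That neither estimate is expected to be sharp is exactly why Theorem~\ref{thm : ECB} gives only the interval $\left[\tfrac12 n,\ \tfrac23 n\right]$.
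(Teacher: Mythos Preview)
Your proposal is a plan rather than a proof, and on both sides the key quantitative step is left unproven.

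\textbf{Upper bound.} Your charging idea (``each triangular face paid for by about $\tfrac32$ Constructor edges'') does not obviously lead to the constant $\tfrac23$, and you acknowledge as much. The paper's argument is quite different and much cleaner: Blocker simply claims $\lfloor n/3\rfloor$ pairwise non-adjacent \emph{outer} edges (sides of the $n$-gon), which he can do by a pairing strategy on consecutive triples of sides. Then one \emph{contracts} these Blocker edges: since no triangle of $\C$ contains any of them, every triangle of $\C$ survives as a triangle in the contracted outerplanar graph on $n-\lfloor n/3\rfloor$ vertices, giving at most $\tfrac{2n}{3}-2$ triangles. The non-adjacency is exactly what makes the contraction safe (two adjacent Blocker sides would also contract a potential Constructor edge). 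Your Euler-type bound $t\le m+1$ is correct but is not what drives the constant; the contraction trick is the missing idea.

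\textbf{Lower bound.} Your many-arcs approach is in the right spirit but the amortisation claim (``killing the current triangle together with its re-routings costs Blocker at least two'') is asserted, not proved, and is exactly the subtle point. The paper instead builds \emph{one} global polygon $\mathcal{P}$ of size $m\in[\tfrac n2,n]$ in Stage~I, arranged so that Blocker has at most $2m-n$ edges inside $\mathcal{P}$ afterwards; in Stage~II Constructor triangulates $\mathcal{P}$ by repeatedly playing the shortest available ``$k$-ear'', with a careful case split and a telescoping potential argument showing that whenever $k\ge 2$ (i.e.\ Constructor is forced to skip vertices) Blocker's edge count inside the current polygon drops by at least $2k$. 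This yields $T\ge m-\tfrac{l_0}{2}\ge \tfrac n2 - O(1)$. Your arc-switching scheme would need a comparable potential argument; as written it is not clear Constructor can absorb Blocker's moves at the claimed $2{:}1$ rate.
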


	\subsection{Organization of the paper}
	
	In Section \ref{sec : prelim}, notations and preliminary results are given that will be useful later in the paper. 
	We prove the results concerning the Constructor-Blocker game in Section \ref{sec : results CB}, starting with forbidding nothing, then a path and finally a star. Theorems \ref{thm : g(n,K_3,emptyset)} to \ref{thm : g(n,K_3,S_k)} are proven in this section. Section \ref{sec : results planarity} is devoted to PCB and ECB games and contain the proofs of Theorem~\ref{thm : PCB} and Theorem~\ref{thm : ECB}.

	\section{Preliminaries}\label{sec : prelim}

	\subsection{Notation}
	
	
	Most of the notation used in this paper is standard 
	and is chosen according to~\cite{west2001introduction}.
	We set $[n]:=\{k\in\mathbb{N}:~ 1\leq k\leq n\}$ for
	every positive integer $n$. 
	Let $G$ be a graph. Then we write $V(G)$ and $E(G)$ for the \emph{vertex set} and the \emph{edge set} of $G$, respectively. 
	Let $v,w$ be vertices in $G$.
	If $\{v,w\}$ is an edge in $G$,  
	we write $vw$ for short. 
	The \emph{neighbourhood} of $v$ is
	$N_G(v) : =\{w\in V(G): vw\in E(G)\}$,
	and we call its size the \emph{degree} of $v$.
	Moreover, the \emph{minimum degree} in $G$ is denoted $\delta(G)$, and the \emph{maximum degree} in $G$ is denoted $\Delta(G)$.
	Often, when the graph $G$ is clear from the context,
	we omit the subscript $G$ in all definitions above.
	
	Let $G$ and $H$ be graphs. 
	We say that $H$ is a \emph{subgraph} of $G$,
	denoted $H\subset G$, if both $V(H)\subset V(G)$ and 
	$E(H)\subset E(G)$ hold.
	The number of copies of $H$ in $G$ is the number of subgraphs of $G$ isomorphic to $H$.
	For a subset $A\subset V(G)$ we let $G-A$
	be the subgraph of $G$ 
	with vertex set $V(G)\setminus A$
	and edge set $\{vw\in E(G):~ v,w\in V(G)\setminus A\}$.
	
	We write $P_n$ for the \emph{path} on $n$ vertices, i.e.~with vertex set $V(P_n)=\{v_1,\ldots,v_n\}$
	and edge set $E(P_n)=\{v_iv_{i+1}:~ i\in [n-1]\}$. Additionally, we call a $P_n$ with one additional vertex that is connected to all $n$ vertices of $P_n$ a \emph{fan} of size $n$, denoted by $F_n$.
	Similarly,
	we write $C_n$ for the \emph{cycle} on $n$ vertices, i.e.~with vertex set $V(C_n)=\{v_1,\ldots,v_n\}$
	and edge set $E(C_n)=\{v_iv_{i+1}:~ i\in [n-1]\}\cup \{v_nv_1\}$, and call a $C_n$ with one additional vertex that is connected to all $n$ vertices of $C_n$ a \emph{wheel} of size $n$, denoted by $W_n$.
	We write $K_n$ for the \emph{complete graph} on $n$ vertices. In particular, $K_3$ is a triangle. A \emph{triangle with a pendant leg} is a triangle with one additional edge attached to one of its vertices.
	We denote a \emph{star} with $n$ leaves as $S_n$. Additionally, we call an $S_2$ a \emph{cherry} and call the central vertex its root.
	
	For functions $f,g:\mathbb{N} \rightarrow \mathbb{R}$, we write $f(n)=o(g(n))$ if $\lim_{n\rightarrow \infty} \left|\frac{f(n)}{g(n)}\right| = 0$ and $f(n) = O(g(n))$ if $\limsup_{n\rightarrow \infty} \frac{\left|f(n)\right|}{g(n)} < \infty$.
	
	Assume a Constructor-Blocker game is in progress, then we will denote by $\C$ (resp. $\B$) \emph{Constructor's} (resp. \emph{Blocker's}) \emph{graph}, that is the graph consisting of the edges claimed by Constructor (resp. Blocker). If an edge has not been claimed by either player, we call such an edge \emph{free}.

	\subsection{Relevant game tools.}
	
	The Erd\H{o}s-Selfridge Criterion~\cite{ERDOS1973298} is a classical result for Maker-Breaker games, which (if applicable) gives an easy argument for a win by Breaker. It is also often used in cases where Maker takes over the role of Breaker and proceeds to claim at least one element of every winning set. It was stated in its original form as follows:
	
	\begin{lem}[Erd\H{o}s-Selfridge-Criterion~\cite{ERDOS1973298}]
		Let $X$ be a finite set and let $\mathcal{W}\subseteq \mathcal{P}(X)$ satisfying
		$$
		\sum_{W \in \mathcal W} 2^{-|W| + 1} < 1 \, .
		$$
		Then in the Maker-Breaker game on board $X$ with winning sets $\mathcal{W}$, Breaker has a strategy to claim at least one element in each of the winning sets in $\mathcal{W}$.
	\end{lem}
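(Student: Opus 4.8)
The plan is to use the classical weight-function (potential) argument, assuming for concreteness that Maker moves first (the case where Breaker starts is only easier). Given a position of the game, call a winning set $W\in\mathcal{W}$ \emph{alive} if Breaker has claimed none of its elements, and let $u(W)$ denote the number of elements of $W$ not yet claimed by Maker; for an alive set these are exactly its free elements. I would work with the potential
\[
\Phi \;=\; \sum_{W\ \mathrm{alive}} 2^{-u(W)}\,.
\]
At the start $\Phi=\sum_{W\in\mathcal{W}}2^{-|W|}$, which by the hypothesis $\sum_W 2^{-|W|+1}<1$ is less than $\tfrac12$. The key point is that if at some moment Maker owns every element of some $W$, then $W$ is alive with $u(W)=0$, so $\Phi\ge 2^{0}=1$. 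Hence it suffices to give Breaker a strategy that keeps $\Phi<1$ throughout; then at the end Breaker owns an element of each winning set.

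Next I would record the effect of a single move. Call $D(x)=\sum_{W\ni x,\ W\ \mathrm{alive}}2^{-u(W)}$ the \emph{danger} of a free element $x$. When Maker claims $x$, every alive $W\ni x$ has $u(W)$ decrease by one, so $\Phi$ grows by exactly $D(x)$ (measured just before the move), and clearly $D(x)\le\Phi$. When Breaker claims $y$, every alive $W\ni y$ dies, so $\Phi$ drops by exactly $D(y)$ (measured just before the move); crucially, a Breaker move only removes sets from the defining sums, so it can only (weakly) decrease the danger of every element. Breaker's strategy will be the greedy one: always claim a free element of maximum current danger.

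The main obstacle, and really the only subtle point, is the bookkeeping showing this keeps $\Phi<1$. The naive approach — compare Maker's $k$-th move with Breaker's $k$-th reply and hope $\Phi$ does not rise over the round — is false, since Maker can grab a uniquely dangerous element before Breaker can take it. The fix is to pair Breaker's $k$-th move with Maker's \emph{next} move. First, after Maker's opening move $\Phi\le 2\Phi_0<1$, because that move raises $\Phi$ by $D(x_1)\le\Phi_0$. Now consider the block consisting of Breaker's $k$-th move followed by Maker's $(k+1)$-st move. Breaker claims a maximum-danger free element $y_k$, lowering $\Phi$ by $D(y_k)$. Then Maker claims some free element $x_{k+1}$; but $x_{k+1}$ was already free at the moment Breaker chose $y_k$, so at that instant its danger was at most $D(y_k)$, and after Breaker's move its danger is no larger. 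Hence Maker's increase is at most $D(y_k)$, i.e.\ at most Breaker's decrease, so $\Phi$ never rises above its value right after Maker's opening move. By induction $\Phi<1$ after every move; in particular Maker never completes a winning set, which is exactly the assertion.
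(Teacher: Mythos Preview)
Your proof is correct and is precisely the classical potential-function argument of Erd\H{o}s and Selfridge. Note that the paper does not give its own proof of this lemma: it is quoted as a known result from~\cite{ERDOS1973298}, so there is nothing to compare against beyond observing that your argument is the standard one.
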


	
	In~\cite{bagan_incidence_2024}, this criterion was adapted for scoring positional games, and we state here a version corresponding to the players claiming edges of a graph.

	\begin{thm}[Theorem 10 in~\cite{bagan_incidence_2024}]\label{thm : ES}
		Let $G = (V,E)$ be a graph on $n$ vertices with $m$ edges. Let $\mathcal{W} \subset \mathcal{P}(E)$ and consider the scoring Maker-Breaker positional game on the board $X=E$ in which the winning sets are the elements of $\mathcal{W}$. Set $\ell:= \underset{e\neq e'\in E}{\max}|\{W \in \mathcal{W}\,|\, e,e' \in W\}|$.
		If Maker starts, then the score of the game is at least $\sum_{W \in \mathcal{W}}2^{-|W|}-\frac{m\ell}{8}$.
		If Breaker starts, then the score is at most $\sum_{W \in \mathcal{W}}2^{-|W|}$.
	\end{thm}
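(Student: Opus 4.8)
The plan is to run the classical Erdős--Selfridge potential argument adapted to the scoring setting, tracking a single weight function whose initial value is $\sum_{W\in\mathcal W}2^{-|W|}$ and whose value at the end of the game equals the score. Throughout, call a winning set $W$ \emph{alive} if Breaker has not yet claimed any element of $W$ (so Maker could still in principle complete it), and let $u(W)$ denote the number of currently unclaimed elements of $W$. I would work with the potential
$$\Phi := \sum_{W\ \text{alive}} 2^{-u(W)}.$$
At the start every set is alive with $u(W)=|W|$, so $\Phi=\sum_{W}2^{-|W|}$; at the end (the board being fully claimed) a set is alive exactly when Maker owns all of it, in which case $u(W)=0$ and it contributes $1$, so $\Phi$ equals the number of winning sets completed by Maker, i.e. the final score. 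It therefore suffices to control how $\Phi$ evolves move by move.

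The key bookkeeping quantity is the \emph{danger} $d(e):=\sum_{W\ni e,\ \text{alive}}2^{-u(W)}$ of a free element $e$. A direct check shows that when Maker claims $e$ the potential increases by exactly $d(e)$ (each alive $W\ni e$ stays alive and its term doubles), while when Breaker claims $e$ the potential decreases by exactly $d(e)$ (each alive $W\ni e$ dies, losing its term). For the Breaker-starts upper bound I would have Breaker always claim a free element of maximum danger. The crucial observation is monotonicity: a Breaker move can only kill sets, hence can only decrease every $d(\cdot)$. Pairing moves as (Breaker, Maker), Breaker's move lowers $\Phi$ by $\max_e d(e)$, and Maker's subsequent move raises it by the (possibly updated, hence no larger) danger of her chosen free element, which is at most that maximum; thus each pair changes $\Phi$ by at most $0$. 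A trailing Breaker move, if $m$ is odd, only helps, so $\Phi_{\mathrm{end}}\le\Phi_{\mathrm{start}}=\sum_W 2^{-|W|}$.

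For the Maker-starts lower bound I would have Maker play the symmetric greedy strategy, always claiming a free element of maximum danger, and pair moves as (Maker, Breaker). Here the subtlety, and the source of the correction term, is that a Maker move can \emph{increase} dangers: after Maker claims $e$, the danger of another free element $f$ grows by exactly $\sum_{W\ni e,f,\ \text{alive}}2^{-u(W)}$. Since each such set contains the two still-unclaimed elements $e,f$ we have $u(W)\ge 2$, so every term is at most $\tfrac14$, and there are at most $\ell$ such sets by definition of $\ell$; hence the increase is at most $\ell/4$. Writing $\mu:=\max_e d(e)$ for the danger before Maker's move, Maker's move raises $\Phi$ by $\mu$, while Breaker's greedy reply lowers it by at most $\mu+\ell/4$, so each (Maker, Breaker) pair changes $\Phi$ by at least $-\ell/4$. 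There are at most $m/2$ such pairs (a trailing Maker move, if $m$ is odd, again only helps), giving a total drop of at most $m\ell/8$ and hence $\Phi_{\mathrm{end}}\ge\Phi_{\mathrm{start}}-\tfrac{m\ell}{8}=\sum_W 2^{-|W|}-\tfrac{m\ell}{8}$.

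The main obstacle I expect is not conceptual but in the careful accounting of the two one-sided estimates: verifying that Breaker's greedy move dominates any Maker response in the upper bound (which rests precisely on the monotonicity of $d$ under Breaker moves), and pinning down the exact per-round loss $\ell/4$ in the lower bound, including the observations that both $e$ and $f$ are unclaimed so that each shared-set term is at most $1/4$, and that at most $\ell$ sets are shared. Handling the parity of $m$ and confirming that any leftover move always acts in the favourable direction for the relevant player is routine but should be stated explicitly.
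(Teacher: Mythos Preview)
The paper does not prove this statement: it is quoted verbatim as Theorem~10 of \cite{bagan_incidence_2024} and used as a black box in the proof of Theorem~\ref{thm : g(n,K_3,emptyset)}. There is therefore no ``paper's own proof'' to compare against.

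That said, your argument is correct and is precisely the standard Erd\H{o}s--Selfridge potential method adapted to the scoring setting, which is how the result is proved in \cite{bagan_incidence_2024}. The key steps---defining $\Phi=\sum_{W\ \text{alive}}2^{-u(W)}$, identifying the per-move change as $\pm d(e)$, using monotonicity of dangers under Breaker moves for the upper bound, and bounding the post-Maker danger inflation by $\ell/4$ for the lower bound---are all accurate. Your handling of the inflation term is exactly right: before Maker moves, any alive $W$ containing both $e$ and the prospective Breaker element $f$ has $u(W)\ge 2$, so contributes at most $1/4$, and at most $\ell$ such sets exist. The parity remark is also correctly handled.
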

	
	
	Let us also prove a helpful lemma, which provides Constructor with a strategy to claim a triangle with a pendant leg while keeping the maximum degree in her graph at $3$. We prove this lemma here, since we will use it on two different occasions in Section~\ref{subsec : results K_3-S} and Section~\ref{subsec : pcb}.
	
	\begin{lem}
		\label{lemma : pendant leg}
		If $v_1,\dots,v_5$ are isolated in $\C$ and if there is no Blocker edge between these vertices, then Constructor can build a triangle with a pendant leg within four moves (if this subgraph is not forbidden).
	\end{lem}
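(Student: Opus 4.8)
The plan is to hand Constructor an explicit strategy that only ever touches edges among $v_1,\dots,v_5$. We may assume that Blocker also plays inside $\{v_1,\dots,v_5\}$: any move of his elsewhere can only help Constructor, and she simply ignores it. The four relevant moves are Constructor's, so the play runs $C,B,C,B,C,B,C$, giving Constructor four moves and Blocker three in between. The target structure is reached as follows: in her first three moves Constructor builds a star $S_3$ (a \emph{claw}) on four of the five vertices, with centre $c$ and leaves $a,b,d$, arranging that at least two of the three \emph{completion edges} $ab$, $ad$, $bd$ (the edges joining pairs of leaves) are still free; on her fourth move she claims a free completion edge, which turns two claw edges and the new edge into a triangle and leaves the third claw edge as a pendant leg.

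Concretely, Constructor first claims $v_1v_2$. Blocker replies with some edge $b_1\ne v_1v_2$, so $b_1$ is incident to at most one of $v_1,v_2$; after relabelling $v_1$ and $v_2$ if necessary, we may assume $b_1$ is not incident to $v_1$, and we take $v_1$ as the centre of the future claw. Constructor next claims $v_1v_3$, which is free since $b_1$ is not incident to $v_1$. After Blocker's second move $b_2$, Constructor adds a third claw edge $v_1v_x$ with $x\in\{4,5\}$, so that the claw has centre $v_1$ and leaves $v_2,v_3,v_x$, and its completion edges form the triple $C_x:=\{v_2v_3,\,v_2v_x,\,v_3v_x\}$. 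If $b_2\in\{v_1v_4,v_1v_5\}$ then $x$ is forced (to the value for which $v_1v_x$ is still free), but in that case $b_2\notin C_x$, so at most one of $b_1,b_2$ lies in $C_x$ and two completion edges remain free. Otherwise both $v_1v_4$ and $v_1v_5$ are still free and Constructor may choose $x$: since $C_4\cap C_5=\{v_2v_3\}$, the two distinct Blocker edges $b_1,b_2$ cannot be simultaneously contained in $C_4$ and in $C_5$, so Constructor can pick $x$ with $|C_x\cap\{b_1,b_2\}|\le 1$, again leaving at least two completion edges free.

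To finish, Blocker's last move destroys at most one of these two surviving completion edges, so Constructor claims a free one on her fourth move, and the component she has built on $\{v_1,\dots,v_5\}$ is exactly a triangle with a pendant leg. Legality is immediate: at every point the subgraph Constructor has created on the (initially isolated) vertices $v_1,\dots,v_5$ is contained in a triangle with a pendant leg, and in particular has maximum degree at most $3$, so as soon as a triangle with a pendant leg is not forbidden, none of her moves violates the rules. The only delicate step is the middle one — guaranteeing that two completion edges survive Blocker's two interfering moves — and this is exactly what the choice of the claw's centre $v_1$ (avoiding $b_1$) together with the adaptive choice of the second spare leaf $v_x$ (exploiting $C_4\cap C_5=\{v_2v_3\}$) is engineered to deliver.
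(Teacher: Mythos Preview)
Your proof is correct and follows the same broad plan as the paper's: hand Constructor an explicit local strategy on $\{v_1,\dots,v_5\}$, beginning with $v_1v_2$ and then $v_1v_3$. The execution diverges on the third move. The paper splits on whether Blocker's second move is $v_2v_3$; if not, Constructor closes the triangle $v_1v_2v_3$ immediately and only afterwards attaches a pendant leg from $\{v_4,v_5\}$. You instead \emph{always} complete a claw $S_3$ centred at $v_1$ first, choosing the third leaf $v_x\in\{v_4,v_5\}$ adaptively via the clean observation $C_4\cap C_5=\{v_2v_3\}$, and only close a triangle on the fourth move. Both strategies succeed; yours replaces the paper's two-branch case split with a uniform counting argument on the completion edges, and makes the legality check (every intermediate graph is a subgraph of the target) slightly more transparent.
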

	
	\begin{proof}
		We give a strategy for Constructor to achieve this goal. In her first move she plays $v_1v_2$. Blocker might play an edge between two vertices among $v_1,\dots,v_5$, without loss of generality assume this edge to be adjacent to $v_5$. Then Constructor plays $v_1v_3$.
		\begin{itemize}
			\item If Blocker plays $v_2v_3$, then Constructor plays $v_1v_4$. Blocker cannot prevent her from creating a triangle. Indeed, he might play $v_2v_4$ or $v_3v_4$, she will play the other one.
			\item Otherwise Constructor plays $v_2v_3$. Then even after Blocker's move she will be able to connect $v_4$ or $v_5$ to one of $v_1,v_2,v_3$.
		\end{itemize}
		An example is given in Figure \ref{fig : pendant leg strategy}.
	\end{proof}
	
	\begin{figure}[h!]
		\centering
		\input{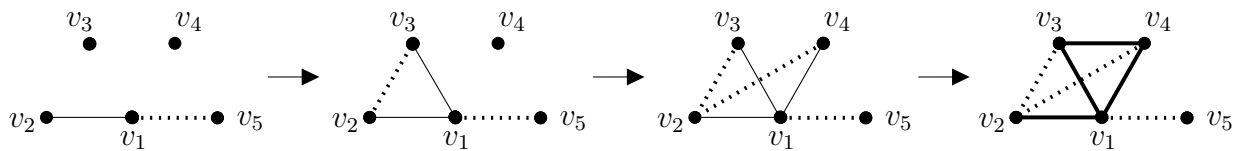}
		\caption{How to build a triangle with a pendant leg? An example of Constructor's (solid lines) and Blocker's (dotted lines) successive moves, Constructor being the first player.}
		\label{fig : pendant leg strategy}
	\end{figure}
	
	The idea behind this lemma is that Constructor cannot build a triangle on her first trial, but she can create many threats so that Blocker will not be able to handle all of them, thus Constructor will still be able to create triangles.
	
	\medskip
	
	Lastly, we will also prove a helpful lemma for Blocker, which helps to bound the number of Constructor's edges inside each of her components. Blocker can achieve this goal by answering in the component where Constructor played the move just before. It implies that, sometimes, following this strategy is enough for Blocker to prevent Constructor from achieving her goal, and we will use it later on.
	
	\begin{lem}\label{lemma : play in the same cc}
		Assume that Blocker always answers a move by Constructor in the same connected component of $\C$ if possible. If at the end of the game some connected component $C$ of $\C$ contains $r$ vertices, then at least $\frac{\binom{r}{2}-\lfloor \frac{r}{2}\rfloor}{2}$ edges between vertices of $C$ are not claimed by Constructor.
	\end{lem}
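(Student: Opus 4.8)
The plan is to reduce the statement to a clean inequality on Constructor's edge count and then prove that by an induction that follows the way $C$ is assembled during the game. Writing $r:=|V(C)|$, the claim is equivalent to saying that Constructor claims at most $\lfloor r^2/4\rfloor$ of the $\binom r2$ edges spanned by $V(C)$, because $\binom r2-\lfloor r^2/4\rfloor=\frac{\binom r2-\lfloor r/2\rfloor}{2}$; one checks this separately for $r$ even and $r$ odd (both sides equal $\frac{r(r-2)}4$ and $\frac{(r-1)^2}4$ respectively). So it suffices to bound Constructor's edges inside $V(C)$.

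To do that, I would organize Constructor's moves inside $V(C)$. Call a Constructor edge $uv$ with $u,v\in V(C)$ a \emph{merge edge} if $u$ and $v$ lie in different components of $\C$ just before it is played, and an \emph{internal edge} otherwise. Since components of $\C$ only grow and every vertex of $C$ ends up in $C$, at any moment the components meeting $V(C)$ partition it into pieces, and the merge edges (taken in chronological order) merge these pieces; hence they form a binary tree $\mathcal T$ with $r$ leaves and $r-1$ internal nodes, each node $\nu$ corresponding to a piece $P_\nu\subseteq V(C)$ with $s_\nu:=|P_\nu|$ vertices that is a component of $\C$ throughout an interval of time (its \emph{life}), which ends when $P_\nu$ is absorbed into its parent, or at the end of the game for the root. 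Let $x_\nu,y_\nu,z_\nu$ be the numbers of Constructor, Blocker, and free edges inside $P_\nu$ at the end of its life (so $x_\nu+y_\nu+z_\nu=\binom{s_\nu}2$), and set $w_\nu:=x_\nu+\lceil z_\nu/2\rceil$. Since $P_\rho=V(C)$ for the root $\rho$, it is enough to prove $w_\nu\le\lfloor s_\nu^2/4\rfloor$ for every node $\nu$, as this gives $x_\rho\le w_\rho\le\lfloor r^2/4\rfloor$.

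I would prove $w_\nu\le\lfloor s_\nu^2/4\rfloor$ by induction up $\mathcal T$; this is where Blocker's answering strategy enters. Leaves are trivial. For an internal node $\nu$ with children $\nu_1,\nu_2$ of sizes $a,b$ ($a+b=s_\nu$), Constructor's edges inside $P_\nu$ at the end of its life consist of her edges inside the two children, the merge edge of $\nu$, and the $K_\nu$ internal edges she plays in $P_\nu$ during its life. The key point is that right after the merge edge, and right after each internal move of Constructor in $P_\nu$, Blocker must answer inside $P_\nu$ as long as $P_\nu$ still has a free edge; this can fail only for the single move (if any) that turns $P_\nu$ into a complete graph. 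Counting the edges of $P_\nu$ claimed during its life — the $K_\nu+1$ Constructor moves, plus at least $K_\nu$ Blocker answers, and in fact at least $K_\nu+1$ unless a Constructor move completes $P_\nu$ — and using $\binom{s_\nu}2=\binom a2+\binom b2+ab$, one obtains $K_\nu\le\lfloor f_\nu/2\rfloor$, where $f_\nu$ is the number of free edges of $P_\nu$ just after its merge edge, and, after also controlling $z_\nu$, a recursion of the form $w_\nu\le w_{\nu_1}+w_{\nu_2}+1+\lfloor(ab-1)/2\rfloor$. Substituting $w_{\nu_i}\le\lfloor s_{\nu_i}^2/4\rfloor$, it then remains to verify the (sharp) inequality $\lfloor a^2/4\rfloor+\lfloor b^2/4\rfloor+1+\lfloor(ab-1)/2\rfloor\le\lfloor(a+b)^2/4\rfloor$ for all integers $a,b\ge1$, which follows from the identity $\lfloor(a+b)^2/4\rfloor=\lfloor a^2/4\rfloor+\lfloor b^2/4\rfloor+\lfloor ab/2\rfloor+\mathbf 1[a,b\text{ both odd}]$ by a short parity check.

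The main obstacle is the bookkeeping behind that recursion. One must split according to whether $P_\nu$ ever becomes a complete graph during its life, and if so whether this happens on a Constructor move or on a Blocker move; track whether Blocker's answer to the merge edge lands inside $P_\nu$ (it does precisely when $f_\nu\ge1$); and track whether Constructor's last internal move in $P_\nu$ is answered inside $P_\nu$ (it is, unless that move completes $P_\nu$). These are exactly the spots where a claimed edge can escape the pairing with a Blocker answer, and the definition $w_\nu=x_\nu+\lceil z_\nu/2\rceil$ — crediting half the leftover free edges to Constructor — is tuned so that these escapes are absorbed. Keeping $z_\nu\ge0$ and the floor/ceiling estimates consistent across these cases is the real work; the reduction, the tree structure, and the final arithmetic inequality are routine.
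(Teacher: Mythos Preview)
Your proposal is correct and follows the same underlying idea as the paper's proof: induct on the way $C$ is assembled from smaller components via Constructor's merge edges, and use that Blocker's answer stays inside the current component to pair moves. The paper carries this out more loosely, looking only at the final merge $C=C_1\cup C_2$ and invoking a ``without loss of generality Constructor follows Blocker's zone'' reduction to get the cross term $\lfloor r_1r_2/2\rfloor$; your version is a cleaner and more rigorous execution of the same induction, with the potential $w_\nu=x_\nu+\lceil z_\nu/2\rceil$ doing precisely the bookkeeping that the paper's WLOG step glosses over, and the reformulation as $x_\rho\le\lfloor r^2/4\rfloor$ making the arithmetic tidier.
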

	
	\begin{proof}
		We proceed by induction on $r$.
		
		For $r\in \{1,2\}$, the bound is 0, so this is trivially true. 
		
		Let $r\geq 3$ and assume this to be true for any $s<r$. Let $C$ be a connected component of $\C$ of size $r$. Look at the time when Constructor connects two disjoint connected components $C_1$ and $C_2$ of sizes $1 \leq r_1,r_2 < r$ with $C_1 \cup C_2 = C$ (and $r_1+r_2=r$). Up to this point, Blocker has played in $C_1$ whenever Constructor has done so, respectively in $C_2$. If Constructor plays to maximize the number of edges in $C$, without loss of generality, we can assume that from now on she plays in $C_1$ (resp. in $C_2$, resp. an edge between $C_1$ and $C_2$) whenever Blocker does so.
		
		This ensures that, in total, at least $ \frac{\binom{r_1}{2}-\left\lfloor \frac{r_1}{2}\right\rfloor}{2} + \frac{\binom{r_2}{2}-\left\lfloor \frac{r_2}{2}\right\rfloor}{2} + \left\lfloor \frac{r_1r_2}{2}\right\rfloor$ edges are not claimed by Constructor.
		
		If $r_1 = 2k_1$ and $r_2 = 2k_2$ with $k_1,k_2$ positive integers, then this gives
		$ k_1(k_1-1) + k_2(k_2-1) + 2k_1k_2 = (k_1+k_2)(k_1+k_2-1) =  \frac{\binom{r}{2}-\left\lfloor \frac{r}{2}\right\rfloor}{2}$.\\
		The other possibilities for the parities of $r_1$ and $r_2$ give the same result.
		
		Hence by induction this lemma is true for any $r\geq 1$.
	\end{proof}

	\section{Results on the Constructor-Blocker game}\label{sec : results CB}
	
	In this section we will give proofs for the cases when Constructor has no restrictions (Theorem~\ref{thm : g(n,K_3,emptyset)}), when Constructor is not allowed to claim short paths (Theorem~\ref{thm : g(n,K_3,P_6)}), and when Constructor is not allowed to claim small stars (Theorem~\ref{thm : g(n,K_3,S_4)} and Theorem~\ref{thm : g(n,K_3,S_5)}) or large stars (Theorem~\ref{thm : g(n,K_3,S_k)}).
	
	\subsection{Not forbidding anything}\label{subsec : res K_3-empty}
	In this subsection we will prove Theorem~\ref{thm : g(n,K_3,emptyset)}. In this setting, both players are allowed to claim edges without restrictions, and at the end of the game we count the number of triangles in Constructor's graph. This setting actually corresponds to a scoring positional game. The board is the edge set of a complete graph, and the winning sets are all triangles. Before we prove our result, we will first compute the maximum number of triangles a graph on $n$ vertices can have.
	
	\begin{prop}
		The maximum number of triangles in an $n$-vertex graph is
		$$
		ex(n,K_3,\emptyset) = \binom{n}{3} = (1+o(1))\frac{n^3}{6} \, .
		$$
	\end{prop}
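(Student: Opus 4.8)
The plan is to observe that this is essentially a counting triviality: every triangle in an $n$-vertex graph $G$ is determined by a $3$-element subset of $V(G)$, and distinct triangles correspond to distinct triples, so the number of triangles in $G$ is at most $\binom{n}{3}$. First I would note that when $\F = \emptyset$, the family of $\F$-free graphs on $n$ vertices is simply all graphs on $n$ vertices, and in particular $K_n$ is $\F$-free. Since every triple of vertices of $K_n$ spans a triangle, $K_n$ contains exactly $\binom{n}{3}$ triangles, which matches the upper bound and hence is extremal; therefore $ex(n,K_3,\emptyset) = \binom{n}{3}$.

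For the asymptotic statement, I would expand $\binom{n}{3} = \frac{n(n-1)(n-2)}{6} = \frac{n^3}{6}\left(1 - \frac{1}{n}\right)\left(1 - \frac{2}{n}\right)$ and note that the product of the last two factors tends to $1$ as $n \to \infty$, so $\binom{n}{3} = (1+o(1))\frac{n^3}{6}$.

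There is no real obstacle here; the only thing to be slightly careful about is making explicit that "number of copies of $H$" as defined in the preliminaries (number of subgraphs isomorphic to $H$) coincides, for $H = K_3$, with the number of vertex-triples spanning a triangle, which is immediate because a triangle has trivial automorphism-to-vertex-set correspondence (each $3$-set carries at most one triangle).
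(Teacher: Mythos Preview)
Your proposal is correct and follows essentially the same approach as the paper: both argue that triangles correspond to $3$-element vertex subsets, giving the upper bound $\binom{n}{3}$, with equality achieved by $K_n$. Your write-up is slightly more detailed on the asymptotic expansion and the triangle-to-triple correspondence, but the argument is the same.
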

	
	This result comes from the fact that computing the number of triangles in a graph is exactly computing the number of sets of three different vertices $u,v,w$ such that all the edges $uv,uw,$ and $vw$ are present in the graph. Thus there are at most $\binom{n}{3}$ triangles in a graph on $n$ vertices, and there is equality if and only if the graph is complete.
	
	In the game version, Constructor will not be able to claim all the edges of the complete graph -- she will basically claim half of them. Therefore, maybe a more relevant question would be the maximum number of triangles that a graph on $n$ vertices with at most half of the edges can have. This was answered by Rivin in~\cite{RIVIN2002647} who found that this number is (asymptotically) $\frac{n^3}{12\sqrt{2}}$ and is achieved by a clique of size $\frac{n}{\sqrt{2}}$. In the Constructor-Blocker game, we obtain the same order of magnitude, but our constant factor is smaller.

	\begin{proof}[Proof of Theorem \ref{thm : g(n,K_3,emptyset)}]
		In order to prove our result, we make use of Theorem~\ref{thm : ES}. We apply this theorem to the complete graph $K_n$, with $\mathcal{W}$ be the set of all triangles, Constructor playing as Maker and Blocker playing as Breaker. Since a fixed pair of edges can only be in at most one triangle, $\ell=1$. Constructor starts, thus
		$$
		g(n,K_3,\emptyset) \geq \binom{n}{3}\cdot 2^{-3} -\frac{n(n-1)}{2\cdot 8} = (1+o(1))\frac{n^3}{48} \, .
		$$	
		In order to get an upper bound, assume temporarily that Blocker starts. Theorem \ref{thm : ES} states that the score of the game is at most $\binom{n}{3}\cdot 2^{-3} = (1+o(1)) \frac{n^3}{48}$. But the edge Blocker claimed at the beginning blocks at most $n-2$ triangles, which is negligible with respect to $n^3$. Thus this minor change on the first player does not affect our asymptotic equivalent and therefore we have
		$$
		g(n,K_3,\emptyset) =  (1+o(1)) \frac{n^3}{48} \, .
		$$
	\end{proof}
	
	\medskip
	
	Note that $g(n,K_3,\emptyset)$ is the expected number of triangles if both players play randomly during the whole game -- saying it differently, the number of triangles in Constructor's graph at the end of the game is the expected number of triangles in a uniform random graph.

	\medskip
	
	\subsection{Forbidding a path}\label{subsec : results K_3-P}
	
	Next, we turn to the case where we forbid Constructor from claiming a (short) path. Before considering the game, we give the extremal numbers when forbidding a path:
	
	\begin{prop}[Corollary 1.7 in~\cite{luo_maximum_2018}]\label{luo_maximum}
		For $n \geq k \geq 4$ and $s \geq 2$ the following holds:
		$$
		ex(n,K_s,P_k) = \frac{n}{k-1}\cdot \binom{k-1}{s} \, .
		$$
	\end{prop}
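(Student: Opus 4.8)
The plan is to establish the lower and upper bounds separately; the lower bound is immediate and the upper bound is a short induction resting on one classical fact. For the lower bound, when $(k-1)\mid n$ the disjoint union of $\frac{n}{k-1}$ copies of $K_{k-1}$ is $P_k$-free, since each component has only $k-1<k$ vertices, and it contains exactly $\frac{n}{k-1}\binom{k-1}{s}$ copies of $K_s$. For the upper bound I would prove, by induction on $n$, that every $P_k$-free graph $G$ on $n$ vertices contains at most $\frac{n}{k-1}\binom{k-1}{s}$ copies of $K_s$; denote this number by $N_s(G)$. If $G$ is disconnected, summing the induction hypothesis over its components gives the bound, so we may assume $G$ connected. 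If $n\le k-1$, then $N_s(G)\le\binom{n}{s}$, which is at most $\frac{n}{k-1}\binom{k-1}{s}$ because $t\mapsto\binom{t}{s}/t=\frac{1}{s}\binom{t-1}{s-1}$ is nondecreasing in $t$.

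The substantive case is $G$ connected with $n\ge k$. Here I would invoke the Erd\H{o}s--Gallai lemma: a connected graph with minimum degree $\delta$ contains a path on at least $\min\{n,2\delta+1\}$ vertices (take a longest path $v_0\cdots v_p$, note that all neighbours of $v_0$ and of $v_p$ lie on it, and apply the usual rotation argument to get $p\ge 2\delta$ unless the path is Hamiltonian). Since $G$ is $P_k$-free, its longest path has at most $k-1$ vertices; as $n\ge k>k-1$ this forces $2\delta(G)+1\le k-1$, so some vertex $v$ has $\deg(v)\le d_0:=\lfloor\frac{k-2}{2}\rfloor$. Deleting $v$, the graph $G-v$ is $P_k$-free on $n-1$ vertices, and each copy of $K_s$ through $v$ is pinned down by its other $s-1$ vertices, which lie in $N(v)$; hence
\[
N_s(G)\ \le\ N_s(G-v)+\binom{\deg(v)}{s-1}\ \le\ \frac{n-1}{k-1}\binom{k-1}{s}+\binom{d_0}{s-1}.
\]
The induction therefore closes provided $\binom{d_0}{s-1}\le\frac{1}{k-1}\binom{k-1}{s}$.

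Finally I would verify this inequality. Using $\frac{1}{k-1}\binom{k-1}{s}=\frac{1}{s}\binom{k-2}{s-1}$, it is equivalent to $s\binom{d_0}{s-1}\le\binom{k-2}{s-1}$, which is trivial unless $s-1\le d_0$; in that range the factors below are positive and
\[
\frac{\binom{k-2}{s-1}}{\binom{d_0}{s-1}}=\prod_{i=0}^{s-2}\frac{(k-2)-i}{d_0-i}\ \ge\ 2^{s-1}\ \ge\ s,
\]
since $2d_0\le k-2$ makes each factor at least $2$. This completes the induction, so $ex(n,K_s,P_k)\le\frac{n}{k-1}\binom{k-1}{s}$ for all $n$, with equality when $(k-1)\mid n$ by the construction above (for other $n$ one takes $\lfloor n/(k-1)\rfloor$ copies of $K_{k-1}$ plus a clique on the leftover vertices). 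The only external ingredient is the Erd\H{o}s--Gallai path lemma; the one delicate point is calibrating the budget $\binom{d_0}{s-1}$ charged to the deleted vertex so that the recursion balances, which is exactly the factor-of-two slack between $2d_0$ and $k-2$. Luo instead derives this corollary from a Kopylov-type analysis of $2$-connected graphs of bounded circumference, which is more powerful but considerably heavier than the path case requires.
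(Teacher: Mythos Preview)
The paper does not prove this proposition; it is quoted verbatim as Corollary~1.7 of Luo~\cite{luo_maximum_2018} and used only as a benchmark to compare $g(n,K_3,P_k)$ against. So there is no ``paper's own proof'' to match.

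Your argument is correct and self-contained. The induction on $n$ with the Erd\H{o}s--Gallai path lemma supplying a vertex of degree at most $\lfloor(k-2)/2\rfloor$ is the natural elementary route, and your verification of the budget inequality $\binom{d_0}{s-1}\le\frac{1}{k-1}\binom{k-1}{s}$ via the factor-of-two slack $2d_0\le k-2$ is clean. You are also right that the stated equality holds exactly only when $(k-1)\mid n$; for general $n$ your proof gives the upper bound, and the disjoint-cliques construction shows it is asymptotically tight. As you note, Luo's original derivation goes through a Kopylov-style structural analysis of $2$-connected graphs with bounded circumference, which yields stronger cycle-type results but is heavier machinery than the path case needs; your direct approach is preferable here.
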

	
	In particular, this tells us that $ex(n,K_3,P_4) = \frac{n}{3}$ 
	
	Turning to games, since $P_3$ is a subgraph of $K_3$, we will start looking at $P_k$ for $k=4$. In that case, Blocker can prevent Constructor from building a triangle.
	
	\begin{prop}
		We have
		$$
		g(n,K_3,P_4) = 0 \, .
		$$
	\end{prop}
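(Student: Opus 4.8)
The plan is to give Blocker an explicit strategy guaranteeing that Constructor's graph $\C$ never contains a triangle, so that $g(n,K_3,P_4)=0$ (the inequality $g(n,K_3,P_4)\ge 0$ being trivial).

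The starting point is a structural observation: since $\C$ must remain $P_4$-free, every connected component of $\C$ is, at all times, either a star or a triangle (immediate by considering a longest path in a component). The key consequence is the following. Suppose $\C$ is currently triangle-free and Constructor plays an edge $uv$ that creates a triangle $uvw$. Then in $\C$ just before this move, none of $u,v,w$ can have a neighbour outside $\{u,v,w\}$: for instance a neighbour $x\notin\{u,v\}$ of $w$ would make $x-w-u-v$ a $P_4$ in $\C+uv$, so the move would be illegal; the cases of $u$ and $v$ are symmetric. Hence $u-w-v$ must be a whole connected component of $\C$, i.e.\ a cherry (with centre $w$) whose leaves $u,v$ are non-adjacent. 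In short: \emph{a triangle can only ever be completed by adding the missing ``closing edge'' to a cherry-component.}

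Blocker's strategy is then to maintain the invariant: for every cherry-component $x-y-z$ of $\C$, the edge $xz$ has already been claimed by Blocker. After each Constructor move I would first check that it creates \emph{at most one} new cherry-component: a legal Constructor move either joins two isolated vertices, or attaches a pendant edge at an endpoint of a single-edge component, or enlarges/merges stars (it cannot touch a triangle, nor a leaf of a star $K_{1,t}$ with $t\ge 2$, since either would create a $P_4$); and only the second type produces a new cherry-component, say $w-u-v$, coming from an old single edge $uw$ together with the freshly played $uv$. In that situation Blocker replies by claiming the closing edge $wv$. This edge is available: it is not in $\C$ (otherwise $\C+uv$ would contain the triangle $uvw$, excluded below), and if it already lay in $\B$ then the invariant is already satisfied for this component and Blocker may instead play an arbitrary free edge. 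If Constructor's move created no new cherry-component, Blocker simply plays any free edge.

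Finally I would conclude by induction on the number of rounds. The invariant holds for the empty graph. Assuming it holds before a Constructor move, the observation of the second paragraph together with the invariant shows that Constructor cannot complete a triangle on that move — the would-be closing edge of the relevant cherry is already Blocker's; in particular $\C$ stays triangle-free, which is exactly what was used above to see $wv\notin\C$. Blocker's reply then restores the invariant (old cherry-components keep their closing edges since $\B$ only grows, and the at most one new cherry-component is handled). Therefore $\C$ is triangle-free when the game ends, and $g(n,K_3,P_4)=0$. The only genuinely delicate point is the claim that a single Constructor move creates at most one new cherry-component — i.e.\ that Constructor cannot set up two independent triangle threats in one move — so that a single Blocker reply per round is enough; the rest is routine bookkeeping.
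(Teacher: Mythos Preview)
Your proof is correct and follows essentially the same approach as the paper: observe that in a $P_4$-free graph any triangle must be an isolated component, hence can only arise by closing a cherry-component, and have Blocker claim the closing edge of any new cherry. Your version is simply more explicit than the paper's two-line argument---in particular you carefully justify that a single Constructor move creates at most one new cherry-component, a point the paper leaves implicit.
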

	
	\begin{proof}
		The only possibility for a $K_3$ to appear in a $P_4$-free graph is as an isolated triangle. Blocker can easily prevent such isolated triangles with the following strategy: whenever an isolated $P_3$ appears in $\C$, Blocker plays the edge that would complete a triangle, otherwise Blocker plays arbitrarily. Therefore $g(n,K_3,P_4) = 0$.
	\end{proof}
	
	

	Now let us turn to $P_6$. This case is proved similarly to the case of $P_5$, which was proved by Patk\'os, Stojakovi\'c, and Vizier in \cite{patkos_constructor-blocker_2022} by looking at the possibilities of how components of a $P_5$-free graph could look like. For our case, these configurations are shown in Figure~\ref{fig : K_3-P_6 : cc}. Either there exists an edge which is in several triangles (case (a)) -- Blocker will have to be careful about this situation and basically make sure that there are some vertices of the component that are not in any triangle to balance -- or every edge is in at most one triangle (case (b)). In the latter case, only one vertex of the component can be in several triangles and the number of triangles is a bit less than half of the number of vertices of the component.
	
	\begin{figure}[h!]
		\centering
		\input{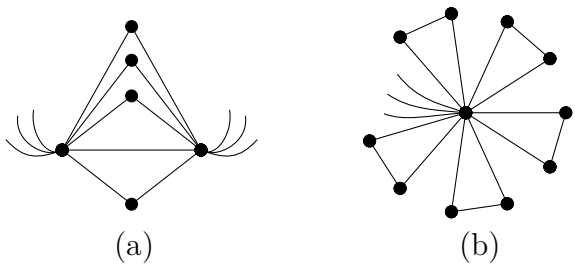}
		\caption{How connected components with many triangles in a $P_6$-free graph can look like. The curved lines mean that there might or might not be such edges.}
		\label{fig : K_3-P_6 : cc}
	\end{figure}

	\begin{proof}[Proof of Theorem \ref{thm : g(n,K_3,P_6)}]
		We will start with a strategy for Constructor. Her strategy will be to create components of case (b), i.e.~she will create components of many triangles intersecting in a single vertex.
		We will show that with this strategy for large enough $n$ Constructor will claim at least $\frac{n}{2}(1-\frac{2}{\ln(n)}) = (1+o(1))\frac{n}{2}$ triangles. Her strategy is split into two stages.
		
		\medskip
		
		{\bf Stage I:} While there are more than $\frac{n}{\ln(n)}$ isolated vertices in $\C$, Constructor constructs big stars. She starts by picking an arbitrary vertex $v \in G$ and then afterwards she claims every available edge incident to $v$. Once there are no more edges available, Constructor picks an isolated vertex in $\C$ that has the lowest degree in $\B$ and repeats this process, until the number of isolated vertices in $\C$ is at most $\frac{n}{\ln(n)}$. Then she proceeds to Stage II.
		
		\medskip
		
		{\bf Stage II:} At the end of Stage I, Constructor's graph will consist of stars. During this stage, for each star Constructor creates a matching of the leaves by claiming arbitrary edges that extend her matching.
		
		Once there are no more free edges available that would extend her matching, she stops playing.

		\medskip
		
		These stages are illustrated in Figure \ref{fig : K_3-P_6 : C strat}.
		
		\begin{figure}[h!]
			\centering
			\input{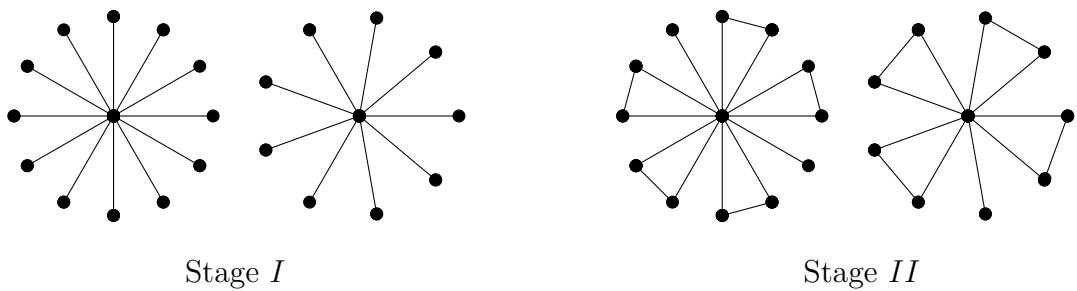}
			\caption{Constructor's strategy when forbidding $P_6$}
			\label{fig : K_3-P_6 : C strat}
		\end{figure}
		
		\medskip
		
		{\bf Strategy discussion:}
		
		We will first look at Stage I and show that all of the stars built by Constructor will have almost linear size.
		
		\begin{claim}
			If Constructor follows her strategy, then, for $n$ large enough, at the end of the first stage, every star in $\C$ will have size at least $\frac{n}{3\ln(n)}$.
		\end{claim}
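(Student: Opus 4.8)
The plan is to show that when Constructor builds her $i$-th star, centred at some vertex $v_i$, the number of leaves $|S_i|$ she obtains satisfies
\[
|S_i| \;\ge\; \tfrac12\big(|I_i| - 1 - \deg_{\B}(v_i)\big),
\]
where $I_i$ denotes the set of vertices isolated in $\C$ at the moment she starts this star and $\deg_{\B}(v_i)$ is its value at that same moment. To see this, first note that throughout Stage~I Constructor only ever joins her current centre to vertices that are isolated in $\C$, so $\C$ stays a disjoint union of stars and the vertex sets of these stars are pairwise disjoint. During the construction of the $i$-th star, each move of Constructor claims one edge from $v_i$ to a currently isolated vertex, and the phase ends precisely when every remaining isolated vertex $w$ satisfies $v_iw \in \B$. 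If $I'$ denotes the set of isolated vertices at that moment, then $|I'| = |I_i| - 1 - |S_i|$ and $\deg_\B(v_i) \ge |I'|$ at the end of the phase; on the other hand $\deg_\B(v_i)$ increased during the phase by at most the number of Blocker moves made in it, which is at most $|S_i|$. Combining, $|I_i| - 1 - |S_i| \le \deg_\B(v_i) + |S_i|$, which rearranges to the displayed inequality.

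Next I would bound $\deg_\B(v_i)$ by an averaging argument. Since the stars built so far in Stage~I are vertex-disjoint, Constructor has claimed at most $n$ edges before starting the $i$-th star, hence so has Blocker, and therefore $\sum_{w} \deg_\B(w) \le 2n$. Constructor picks $v_i$ to be a vertex of $I_i$ of minimum Blocker-degree (for the very first star $\deg_\B(v_1)=0$ trivially), so
\[
\deg_\B(v_i) \;\le\; \frac{1}{|I_i|}\sum_{w\in I_i}\deg_\B(w) \;\le\; \frac{2n}{|I_i|}.
\]
As Stage~I has not yet ended when the $i$-th star is started, $|I_i| > \tfrac{n}{\ln n}$, and hence $\deg_\B(v_i) < 2\ln n$.

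Finally, plugging $|I_i| > \tfrac{n}{\ln n}$ and $\deg_\B(v_i) < 2\ln n$ into the first inequality yields
\[
|S_i| \;>\; \frac12\left(\frac{n}{\ln n} - 1 - 2\ln n\right) \;=\; \frac{n}{2\ln n} - \frac12 - \ln n ,
\]
and since $\frac{n}{2\ln n} - \frac{n}{3\ln n} = \frac{n}{6\ln n}$ exceeds $\frac12 + \ln n$ for all sufficiently large $n$, we get $|S_i| \ge \frac{n}{3\ln n}$, as claimed. The one point that needs care is the first step: one must check that Blocker can destroy at most $\deg_\B(v_i) + |S_i|$ of the potential star-edges at $v_i$, i.e.\ that playing outside the current component never lets Blocker prolong this phase. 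Everything else is bookkeeping about vertex-disjointness and a routine asymptotic comparison.
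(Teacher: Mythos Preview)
Your proof is correct and follows essentially the same route as the paper's: an averaging argument over the at most $n$ Blocker edges to bound $\deg_{\B}(v_i) < 2\ln n$, followed by the observation that from the $> n/\ln n$ initially isolated vertices, Constructor secures at least half of the $\approx n/\ln n - 2\ln n$ potential leaves because each Blocker response can destroy at most one. The paper's version is simply more compressed; your explicit inequality $|S_i| \ge \tfrac12(|I_i| - 1 - \deg_\B(v_i))$ and its derivation make the ``half of the remaining'' step precise, which the paper leaves implicit.
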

		
		\begin{claimproof}
			Stage I lasts as most $(1-\frac{1}{\ln(n)})n$ turns, therefore Constructor and Blocker each will have played at most $(1-\frac{1}{\ln(n)})n$ edges at the end of Stage I. Thus, while there are more than $\frac{n}{\ln(n)}$ isolated vertices in $\C$, there is at least one vertex $v \in \B \setminus \C$ with $d_{\B \setminus \C}(v) < 2\ln(n)$. This means that all of the stars Constructor builds will have at least $\frac{1}{2}\left(\frac{n}{\ln(n)} -2\ln(n)\right) \geq \frac{n}{3\ln(n)}$ leaves for $n$ large enough.
		\end{claimproof}
		
		\medskip
		
		This claim also tells us, that there are at most $3\ln(n)$ star centers in Constructor's graph. Now, we argue that the matchings that Constructor will build during Stage II will cover most of the leaves. For each star, Constructor can match all of the leaves except at most $2\sqrt{n}$ of them. Indeed, if Constructor cannot add an edge between the $k$ remaining leaves of a star, it means that Blocker has blocked all of the $\binom{k}{2}$ edges, but during the whole game Blocker claims less than $\frac{3n}{2}$ edges. We conclude that $\binom{k}{2} < \frac{3n}{2}$ has to hold, and for $n$ large enough we have $k\leq 2\sqrt{n}$.
		
		\medskip
		
		Finally, let us count the number of triangles in $\C$ at the end of the game. There are at most $\frac{n}{\ln(n)}$ isolated vertices and $\frac{(1-\frac{1}{\ln(n)})n\cdot 3\ln(n)}{n} \cdot 2\sqrt{n} < 3\ln(n) \cdot 2\sqrt{n}$ vertices that are leaves of stars but not in the matchings. Then there is one triangle every two vertices that are both leaves of a star and in the matching. This gives us at least $\frac{1}{2}(n-\frac{n}{\ln(n)} - 3\ln(n) - 3\ln(n) \cdot 2\sqrt{n}) \leq \frac{n}{2}(1- \frac{2}{\ln(n)}) = (1+o(1))\frac{n}{2}$ triangles.
		
		\medskip
		
		We now turn to the upper bound and provide a strategy for Blocker ensuring that Constructor does not create more than $\frac{n}{2}$ triangles.
		
		If possible, Blocker always plays in the same connected component $C$ in which Constructor just played, according to the following rules:
		\begin{enumerate}
			\item If there exists an edge $v_1v_2$ which is in two triangles in $C$:
			\begin{itemize}
				\item If Constructor plays $vv_i$ ($i \in \{1,2\}$) for some $v \in G$ and if $vv_{3-i}$ is free, then Blocker claims $vv_{3-i}$.
				\item Otherwise, if for some $v \in \C$ the edge $vv_i$ ($i \in \{1,2\}$) is in $\C$ and if $vv_{3-i}$ is free, then Blocker claims $vv_{3-i}$.
				\item If none of the two previous cases apply, Blocker plays arbitrarily.
			\end{itemize}
			This corresponds to Figure \ref{fig : K_3-P_6 : cc}.a.
			\item Otherwise, if Constructor completes the first $C_4$ of $C$ and at least one diagonal is free, then Blocker plays this diagonal, prioritizing the diagonal with a vertex of largest Constructor degree out of the four vertices.
			\item Otherwise, if there is a free edge that could close a triangle in $C$, then Blocker takes it, prioritizing an edge adjacent to the edge Constructor just played.
			\item Otherwise Blocker plays an arbitrary edge.
		\end{enumerate}
		
		We show that this strategy prevents Constructor from creating too many triangles. We will start with the following claim:
		
		\begin{claim}
			At any time of the game, the number of triangles in a connected component $C$ of $\C$ is at most $\frac{|C|}{2}$.
		\end{claim}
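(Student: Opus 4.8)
The plan is to prove the claim by induction on the number of rounds played, keeping the inequality $t(C)\le |C|/2$ for every connected component $C$ of $\C$ as an invariant, where $t(C)$ denotes the number of triangles of $C$; the base case (no claimed edges) is trivial. For the inductive step, suppose Constructor claims an edge $e=xy$ and let $C$ be the component of the updated graph containing $e$. If $x$ and $y$ lay in distinct components $C_1,C_2$ of the previous graph (one of which may be a single vertex), then $e$ closes no triangle and $t(C)=t(C_1)+t(C_2)\le\frac{|C_1|}{2}+\frac{|C_2|}{2}=\frac{|C|}{2}$, so we are done. Hence we may assume $x$ and $y$ were already in the same component $C$; then $|C|$ is unchanged and it remains to control the triangles newly closed by $e$.

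The first ingredient is a \emph{locality} property of $P_6$-free graphs: if $e=xy$ closes a triangle $xyz$, then every triangle of $C$ must meet $\{x,y,z\}$, since otherwise a shortest path joining a disjoint triangle to $xyz$ produces a $P_6\subseteq\C$. Together with the two shapes of Figure~\ref{fig : K_3-P_6 : cc}, this yields a dichotomy. If no edge of $C$ lies in two triangles (case~(b)), a short structural argument shows that all triangles of $C$ pass through one common vertex and are therefore pairwise edge-disjoint, so that $2\,t(C)\le |C|-1$ and the invariant holds automatically. If some edge $v_1v_2$ of $C$ lies in at least two triangles (case~(a)), the naive count fails — a book $B_k$ has $k$ triangles on only $k+2$ vertices and a $K_4$ has $4$ triangles on $4$ vertices — so this is exactly where Blocker's strategy is used. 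I would first describe the possible shape of such a component (essentially $v_1,v_2$, their common ``pages'', a few edges among consecutive pages, and short pendant trees hanging off $v_1$ or $v_2$), and then argue that Blocker's rules keep it harmless: rule~(2) prevents any $C_4$ of $C$ from becoming a $K_4$, while rule~(1) prevents the doubly-covered edge $v_1v_2$ from ever acquiring a third triangle — indeed, as soon as a new vertex starts being joined to $\{v_1,v_2\}$, the corresponding clause of rule~(1) forces into $C$ a vertex adjacent in $\C$ to exactly one of $v_1,v_2$, which can lie in no triangle. Accounting these ``inert'' vertices against the surplus of triangles then gives $t(C)\le|C|/2$.

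The step I expect to be the main obstacle is precisely this case~(a) analysis: one must pin down exactly which connected $P_6$-free graphs contain an edge in two triangles, and then choose a strengthened invariant (recording, per such component, the inert vertices Blocker has already produced and that Blocker always has a preserving move available) so that it survives \emph{every} Constructor move — in particular ruling out that Constructor evades rule~(1) by preparing several page-threats at once, or inflates $t(C)$ by merging a book-heavy component with another triangle-rich component, which the locality observation rules out once one also tracks in which component Blocker last answered. The merging case and case~(b) should be routine once the $P_6$-free locality property is in hand.
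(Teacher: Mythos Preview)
Your dichotomy into cases (a) and (b) matches the paper, and your handling of case~(b) and of the merging step is correct. The gap is in case~(a), and you have diagnosed it yourself: the bare invariant $t(C)\le |C|/2$ is too weak to carry an induction on rounds (a component sitting exactly at $t(C)=|C|/2$ would be broken by a single new triangle), and you never supply the strengthened invariant you say is needed.

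The paper sidesteps this entirely. Rather than maintaining an invariant round by round, it argues retrospectively: fix a component $C$ with $|C|\ge 6$ containing an edge $v_1v_2$ in at least two triangles, observe that every other vertex $u_i$ of $C$ is adjacent only to $\{v_1,v_2\}$ (else a $P_6$ appears), and then ask \emph{at what moment Constructor claimed $v_1v_2$}. There are three sub-cases --- $v_1v_2$ was the first edge of $C$; a $C_4$ already existed in $C$; or some $v_iu_j$ existed but no cycle --- and in each, rules~(2) and~(3) force that by the time $v_1v_2$ lies in two triangles, Blocker has already claimed some $v_iu_j$. This single retrospective step is exactly what your sketch lacks: it secures one ``inert'' $u_j$ \emph{before} rule~(1) ever activates. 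From that point on, rule~(1) blocks at least $\lfloor(|C|-3)/2\rfloor$ of the remaining $u_j$'s, and the bound $t(C)\le\lfloor |C|/2\rfloor$ follows. Small components $|C|\le 5$ are handled separately via the edge count of Lemma~\ref{lemma : play in the same cc}.

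One further correction: your reading of rule~(1) is off. It does not ``prevent $v_1v_2$ from acquiring a third triangle''; on the contrary, $v_1v_2$ may lie in many triangles. Rule~(1) only guarantees that, once $v_1v_2$ is doubly covered, each Constructor move $v_iu$ is answered by $v_{3-i}u$, so that this particular $u$ never reaches both $v_1$ and $v_2$. The bound comes from counting these blocked $u$'s against the triangle-bearing ones, not from capping the multiplicity of $v_1v_2$.
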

		
		\begin{claimproof}
			We first deal with the small connected components. Let us assume, that Blocker plays according to the strategy in Lemma~\ref{lemma : play in the same cc}. Since Blocker always answers in the same connected component as the one  Constructor just played an edge in (when possible), the following holds:
			\begin{itemize}
				\item If $|C| = 3$, then $C$ contains no triangle.
				\item If $|C| = 4$, then Constructor has at most four edges, thus at most $1$ triangle.
				\item If $|C| = 5$, then Constructor has at most six edges, thus at most two triangles (this can be shown by an easy case distinction).
			\end{itemize}
			
			Now assume that $|C| \geq 6$ and assume that there is an edge $v_1v_2$ that is in several triangles. The other vertices of $C$ will be denoted by $u_1,u_2,\dots$ (see Figure \ref{fig : K_3-P_6 : B strat}). In order to have at least three triangles without any $P_6$, Constructor cannot have any edge between some $u_i$ and $u_j$ for $i\neq j$, thus every $u_i$ is connected to $v_1,v_2$, or both. Let us look back at the time when the edge $v_1v_2$ was claimed by Constructor.
			
			\begin{itemize}
				\item Either it was the first edge of $C$ played by Constructor. Then when Constructor played another edge in $C$ (some $v_1u_i$ ou $v_2u_i$), Blocker played (or had already played) $v_2u_i$ or $v_1u_i$.
				\item Or there was already a $C_4$ in $C$, say $v_1u_1v_2u_2$ was the first one. Let us show that this is actually impossible. When this cycle was completed by Constructor, one of the following happened:
				\begin{itemize}
					\item Either $v_1$ and $v_2$ both had degree 2. Then Blocker already had claimed one of the diagonals of the $C_4$, and claimed the other one, thus $v_1v_2 \in \B$.
					\item Or $v_1$ or $v_2$ had degree at least 3. Then, because of Rule (2) in her strategy Blocker blocked $v_1v_2$.
				\end{itemize}
				Thus it is impossible that Constructor completed a cycle in $C$ before claiming $v_1v_2$.
				\item Or there was at least one edge $v_1u_i$ or $v_2u_i$ and no cycle. When Constructor played $v_1v_2$, then because of Rule (3) in her strategy Blocker played an edge of the form $v_2u_i$ or $v_1u_i$.
			\end{itemize}
			
			\begin{figure}[h!]
				\centering
				\begin{tikzpicture}[x=0.75pt,y=0.75pt,yscale=-1,xscale=1]

\draw [color={rgb, 255:red, 0; green, 0; blue, 0 }  ,draw opacity=1 ]   (134.95,101.26) -- (89.3,19) ;
\draw    (43.65,101.26) -- (89.3,19) ;
\draw    (43.65,101.26) -- (89.3,65.19) ;
\draw    (134.95,101.26) -- (89.3,65.19) ;
\draw    (89.3,137.33) -- (134.95,101.26) ;
\draw    (89.3,137.33) -- (43.65,101.26) ;
\draw    (134.95,101.26) -- (43.65,101.26) ;
\draw    (124,146) -- (134.95,101.26) ;
\draw    (58,22) -- (43.65,101.26) ;
\draw [draw opacity=0]   (89.3,137.33) -- (124,146) ;
\draw [draw opacity=0]   (58,22) -- (89.3,19) ;

\draw (17,93) node [anchor=north west][inner sep=0.75pt]   [align=left] {$\displaystyle v_{1}$};
\draw (145,91) node [anchor=north west][inner sep=0.75pt]   [align=left] {$\displaystyle v_{2}$};
\draw (84,142) node [anchor=north west][inner sep=0.75pt]   [align=left] {$\displaystyle u_{1}$};
\draw (83,71) node [anchor=north west][inner sep=0.75pt]   [align=left] {$\displaystyle u_{2}$};
\draw (82,31) node [anchor=north west][inner sep=0.75pt]   [align=left] {$\displaystyle u_{3}$};
\draw (32,11) node [anchor=north west][inner sep=0.75pt]   [align=left] {$\displaystyle u_{4}$};
\draw (131,139) node [anchor=north west][inner sep=0.75pt]   [align=left] {$\displaystyle u_{5}$};

\draw [fill={rgb, 255:red, 0; green, 0; blue, 0 }  ,fill opacity=1 ]  (89.3, 19) circle [x radius= 3, y radius= 3]   ;
\draw [fill={rgb, 255:red, 0; green, 0; blue, 0 }  ,fill opacity=1 ]  (134.95, 101.26) circle [x radius= 3, y radius= 3]   ;
\draw [fill={rgb, 255:red, 0; green, 0; blue, 0 }  ,fill opacity=1 ]  (134.95, 101.26) circle [x radius= 3, y radius= 3]   ;
\draw [fill={rgb, 255:red, 0; green, 0; blue, 0 }  ,fill opacity=1 ]  (134.95, 101.26) circle [x radius= 3, y radius= 3]   ;
\draw [fill={rgb, 255:red, 0; green, 0; blue, 0 }  ,fill opacity=1 ]  (134.95, 101.26) circle [x radius= 3, y radius= 3]   ;
\draw [fill={rgb, 255:red, 0; green, 0; blue, 0 }  ,fill opacity=1 ]  (89.3, 19) circle [x radius= 3, y radius= 3]   ;
\draw [fill={rgb, 255:red, 0; green, 0; blue, 0 }  ,fill opacity=1 ]  (43.65, 101.26) circle [x radius= 3, y radius= 3]   ;
\draw [fill={rgb, 255:red, 0; green, 0; blue, 0 }  ,fill opacity=1 ]  (43.65, 101.26) circle [x radius= 3, y radius= 3]   ;
\draw [fill={rgb, 255:red, 0; green, 0; blue, 0 }  ,fill opacity=1 ]  (89.3, 137.33) circle [x radius= 3, y radius= 3]   ;
\draw [fill={rgb, 255:red, 0; green, 0; blue, 0 }  ,fill opacity=1 ]  (43.65, 101.26) circle [x radius= 3, y radius= 3]   ;
\draw [fill={rgb, 255:red, 0; green, 0; blue, 0 }  ,fill opacity=1 ]  (89.3, 137.33) circle [x radius= 3, y radius= 3]   ;
\draw [fill={rgb, 255:red, 0; green, 0; blue, 0 }  ,fill opacity=1 ]  (89.3, 65.19) circle [x radius= 3, y radius= 3]   ;
\draw [fill={rgb, 255:red, 0; green, 0; blue, 0 }  ,fill opacity=1 ]  (134.95, 101.26) circle [x radius= 3, y radius= 3]   ;
\draw [fill={rgb, 255:red, 0; green, 0; blue, 0 }  ,fill opacity=1 ]  (134.95, 101.26) circle [x radius= 3, y radius= 3]   ;
\draw [fill={rgb, 255:red, 0; green, 0; blue, 0 }  ,fill opacity=1 ]  (134.95, 101.26) circle [x radius= 3, y radius= 3]   ;
\draw [fill={rgb, 255:red, 0; green, 0; blue, 0 }  ,fill opacity=1 ]  (134.95, 101.26) circle [x radius= 3, y radius= 3]   ;
\draw [fill={rgb, 255:red, 0; green, 0; blue, 0 }  ,fill opacity=1 ]  (134.95, 101.26) circle [x radius= 3, y radius= 3]   ;
\draw [fill={rgb, 255:red, 0; green, 0; blue, 0 }  ,fill opacity=1 ]  (124, 146) circle [x radius= 3, y radius= 3]   ;
\draw [fill={rgb, 255:red, 0; green, 0; blue, 0 }  ,fill opacity=1 ]  (43.65, 101.26) circle [x radius= 3, y radius= 3]   ;
\draw [fill={rgb, 255:red, 0; green, 0; blue, 0 }  ,fill opacity=1 ]  (43.65, 101.26) circle [x radius= 3, y radius= 3]   ;
\draw [fill={rgb, 255:red, 0; green, 0; blue, 0 }  ,fill opacity=1 ]  (43.65, 101.26) circle [x radius= 3, y radius= 3]   ;
\draw [fill={rgb, 255:red, 0; green, 0; blue, 0 }  ,fill opacity=1 ]  (58, 22) circle [x radius= 3, y radius= 3]   ;
\end{tikzpicture}
				\caption{Notations for the proof of Theorem \ref{thm : g(n,K_3,P_6)}}
				\label{fig : K_3-P_6 : B strat}
			\end{figure}
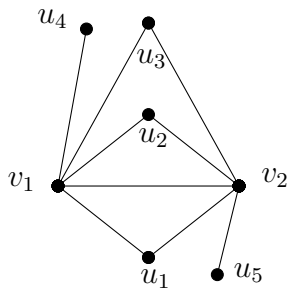

			In all cases, we made sure that when $v_1v_2$ becomes part of two triangles, some edge $v_1u_i$ or $v_2u_i$ is blocked. Then Blocker plays according to Rule (1). He will prevent at least half of the remaining vertices $u_j$ from being in any triangle (that makes $\lfloor\frac{|C|-3}{2}\rfloor$ vertices). In the end there cannot be more than $\lfloor\frac{|C|}{2}\rfloor$ vertices $u_j$ in a triangle and thus $C$ contains at most $\lfloor\frac{|C|}{2}\rfloor $ triangles.
			
			Note that if in some $C$ no edge is in several triangles, then at most one vertex of $C$ can be in several triangles (see Figure \ref{fig : K_3-P_6 : cc}.b), otherwise there would be some $P_6$. Therefore, the number of triangles in such components cannot be more than $\frac{|C|-1}{2}$.
		\end{claimproof}
		
		\medskip
		
		If we now consider all of Constructor's components at the end of the game, if Blocker follows his strategy the number of triangles in $\C$ is at most $\frac{n}{2}$, which gives us our desired result.	
	\end{proof}

	\subsection{Forbidding a star}\label{subsec : results K_3-S}
	We now investigate the case where we forbid Constructor from claiming a star $S_k$ for small values of $k$. Note that for a graph being $S_k$-free is equivalent to being of maximum degree at most $k-1$.
	
	\medskip
	
	First, let us consider the maximum number of triangles a graph of a certain maximum degree can have. This was proven by Chase in~\cite{chase_maximum_2020}:
	
	\begin{prop}[Theorem 1 in~\cite{chase_maximum_2020}]
		For $n \geq 1$ and $k\geq 2$, the following holds:
		$$
		ex(n,K_3,S_k) = (1+o(1)) \frac{n}{k} \binom{k}{3} = (1+o(1))\frac{nk^2}{6} \, .
		$$
	\end{prop}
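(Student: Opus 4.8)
The plan is to prove matching lower and upper bounds, both of which turn out to be elementary; the $(1+o(1))$ only reflects an integrality loss in the construction. I would first record the reformulation already noted in the text: a graph is $S_k$-free if and only if its maximum degree is at most $k-1$, since $S_k=K_{1,k}$ appears as a subgraph precisely when some vertex has at least $k$ neighbours.

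For the lower bound I would exhibit an explicit construction: the disjoint union of $\lfloor n/k\rfloor$ copies of $K_k$, together with $n-k\lfloor n/k\rfloor$ isolated vertices. Each component has maximum degree $k-1$, so the whole graph is $S_k$-free, and it contains exactly $\lfloor n/k\rfloor\binom{k}{3}$ triangles. Since $\lfloor n/k\rfloor\binom{k}{3}\ge\big(\tfrac{n}{k}-1\big)\binom{k}{3}=(1-o(1))\tfrac{n}{k}\binom{k}{3}$ for fixed $k$ as $n\to\infty$, this gives $ex(n,K_3,S_k)\ge(1-o(1))\tfrac{n}{k}\binom{k}{3}$.

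For the upper bound, let $G$ be any $S_k$-free graph on $n$ vertices, so $d(v)\le k-1$ for all $v$. The triangles through a fixed vertex $v$ correspond bijectively to the edges of the induced subgraph $G[N(v)]$, of which there are at most $\binom{d(v)}{2}\le\binom{k-1}{2}$. Summing over all $v$ counts each triangle exactly three times, so $G$ has at most $\tfrac13\sum_{v}\binom{d(v)}{2}\le\tfrac{n}{3}\binom{k-1}{2}$ triangles. The identity $\binom{k}{3}=\tfrac{k}{3}\binom{k-1}{2}$ then yields $\tfrac{n}{3}\binom{k-1}{2}=\tfrac{n}{k}\binom{k}{3}=\tfrac{n(k-1)(k-2)}{6}$, which is $(1+o(1))\tfrac{nk^2}{6}$ as $k\to\infty$. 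Putting the two bounds together gives the stated asymptotics.

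The point worth flagging is that this short double-counting argument only delivers the asymptotic statement, not the exact value of $ex(n,K_3,S_k)$ that Chase's theorem actually determines: when $k$ does not divide $n$ the extremal graph need not be a disjoint union of cliques, and pinning down the precise extremal number requires a more delicate treatment of the leftover vertices (together with a correspondingly tighter upper bound). For the way the proposition is used in the sequel, however, only the asymptotic form is needed, so the argument above suffices.
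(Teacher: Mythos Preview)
Your argument is correct. The double-counting step---bounding the number of triangles through each vertex by $\binom{d(v)}{2}\le\binom{k-1}{2}$ and dividing the sum by three---gives the sharp inequality $ex(n,K_3,S_k)\le\frac{n}{3}\binom{k-1}{2}=\frac{n}{k}\binom{k}{3}$, and together with the clique construction this pins down the asymptotics exactly as stated. You are also right to flag that this does not recover the exact extremal number when $k\nmid n$.

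Your route differs from the one the paper attributes to Chase, which proceeds by induction on $n$: one finds a vertex whose removal destroys few triangles, applies the induction hypothesis to the remaining graph, and tracks the loss precisely. That inductive argument is what yields the \emph{exact} value of $ex(n,K_3,S_k)$ for every $n$, whereas your global double-counting trades precision for brevity. Since only the asymptotic form is used downstream, your shorter argument is entirely adequate here; the inductive approach would be needed only if one cared about the extremal graphs or the exact constant for small residues of $n$ modulo $k$.
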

	
	The proof is done by induction on $n$ and by exhibiting a vertex whose neighbourhood does not intersect too many triangles.
	The extremal graphs are once again disjoint unions of cliques of size $k$, that contain around $\frac{n}{k} \binom{k}{3}$ triangles.
	
	\medskip
	
	Let us turn to the game version. The only way for Constructor to build a triangle while not exceeding maximum degree $2$ in her graph (i.e.~not claiming an $S_3$) is to build an isolated triangle, however Blocker can prevent this from happening, therefore $g(n,K_3,S_3) = 0$. Let us thus turn to the case of $S_4$ and prove Theorem~\ref{thm : g(n,K_3,S_4)} next.
	
	\subsubsection{Forbidding $S_4$}
	
	We will split this case into two parts, one focussing on a strategy for Constructor, and one for Blocker. Afterwards, Theorem~\ref{thm : g(n,K_3,S_4)} will trivially follow from both parts combined. We will start with Constructor's side to get the lower bound, and before we go into more details, we will briefly state the main ideas of the proof. We will show that Constructor can build a big chain of triangles connected by one edge (see Figure~\ref{fig : K_3-S_4 : C strat}, induction). The vertices in this chain are (almost) all in a triangle, and we show that the number of vertices that are not in this chain is negligible. We will state this as a new proposition:
	
	\begin{prop}\label{prop : lower bound g(n,K_3,S_4)}
		Constructor has a strategy to ensure that the following holds:
		$$
		g(n,K_3,S_4) \geq \frac{n}{3}(1+o(1)) \, .
		$$
	\end{prop}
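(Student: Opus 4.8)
The plan is to give Constructor an explicit $S_4$-free target configuration that is triangle-rich and a strategy that realises almost all of it regardless of Blocker.

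\textbf{Target configuration.} Constructor aims at a \emph{chain of triangles}: pairwise vertex-disjoint triangles $T_1,\dots,T_m$ together with, for each $i<m$, one edge joining a vertex of $T_i$ to a vertex of $T_{i+1}$. If the two bridge endpoints used at each $T_i$ are distinct vertices of $T_i$, then every vertex has degree at most $3$, so the configuration is $S_4$-free; moreover it has exactly $3m$ vertices, \emph{all} of which lie in a triangle. Hence if Constructor can build such a chain on $(1-o(1))n$ of the vertices, her final graph contains $(1-o(1))\frac n3$ triangles, which is exactly the bound claimed in Proposition~\ref{prop : lower bound g(n,K_3,S_4)}.

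\textbf{Inductive strategy.} I would have Constructor grow the chain one triangle at a time (this is the induction depicted in Figure~\ref{fig : K_3-S_4 : C strat}), maintaining the invariant that at each stage $\C$ is a chain $T_1,\dots,T_k$ of triangles together with $O(1)$ extra edges and vertices, with a distinguished ``loose end'': a low-degree vertex $v$ incident to the last triangle next to which many vertices untouched by either player are still available. As long as $\Omega(n)$ vertices remain untouched — which holds until $k=(1-o(1))\frac n3$, since each step costs Constructor only $O(1)$ moves — Constructor performs an \emph{extension step}. In it she first builds a cherry on two fresh vertices $w_2,w_3$ rooted at a third fresh vertex $w_1$ (using the threat-creation idea from the proof of Lemma~\ref{lemma : pendant leg}), and then plays $vw_1$; since $w_1$ now has degree $3$, the edges $vw_2$ and $vw_3$ form a \emph{double threat}, each completing a triangle through $v$ and $w_1$. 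Blocker can take at most one of these on his reply, so Constructor closes the other, which produces a new triangle $T_{k+1}$, reincorporates $v$ into a triangle, creates a fresh bridge, and exposes the remaining cherry leaf as the next loose end; the invariant is thus restored with $k+1$ triangles.

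\textbf{Blocker's counterplay and the waste bound.} Blocker's only way to disrupt the extension is to claim, before Constructor does, one of the $O(1)$ relevant edges — a cherry edge, or one of $vw_1,vw_2,vw_3$. Since $\Omega(n)$ vertices are untouched, Constructor can reselect her fresh vertices to dodge any pre-empted edge; and if Blocker pre-empts both threat edges $vw_2,vw_3$, Constructor instead closes the cherry itself into a triangle $\{w_1,w_2,w_3\}$, so in every case she still adds one triangle, only possibly failing to reincorporate $v$. A case analysis of Blocker's replies then shows that, over the whole game, the number of vertices that never end up in a triangle — the loose ends and the debris from the steps where $v$ was not reincorporated — is $o(n)$; carrying out this accounting carefully, and in particular choosing in each step which vertex becomes the next loose end so that the debris stays controlled, is the part I expect to be the main obstacle, since the threat argument itself is local and robust. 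Once no fresh vertices remain Constructor stops: her chain then has $m\ge (n-o(n))/3$ triangles, so $g(n,K_3,S_4)\ge m=\frac n3(1+o(1))$, proving the proposition.
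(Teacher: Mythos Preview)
Your target configuration (a chain of vertex-disjoint triangles joined by bridges) is exactly the one the paper aims for, and the high-level plan is right. The gap is in the extension step. You build the cherry $w_1w_2,\,w_1w_3$ first and only then play $vw_1$, asserting that $vw_2$ and $vw_3$ now form a double threat which Blocker's single reply cannot cover. But Blocker has already had \emph{two} replies while the cherry was being built, and the loose end $v$ is known to him throughout. Concretely, after $w_1w_2$ he plays $vw_2$; after $w_1w_3$ he plays $w_2w_3$; after $vw_1$ he plays $vw_3$. Now all three of $vw_2,\,vw_3,\,w_2w_3$ are gone, and Constructor cannot close \emph{any} triangle on $\{v,w_1,w_2,w_3\}$ --- your fallback ``close the cherry into a triangle $w_1w_2w_3$'' is unavailable too. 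Variants of the order do not help: whatever two of the four relevant edges $\{vw_1,vw_2,vw_3,w_2w_3\}$ Constructor leaves for last, Blocker has three replies to cover them. So the claim that the debris is $o(n)$ is not just unproven but false for this strategy; Blocker can waste a constant fraction of the vertices (and in fact prevent triangles entirely in many steps).

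The paper's fix is to \emph{decouple} cherry-building from attachment: in Stage~I Constructor builds all $\sim n/3$ cherries up front, and only in Stage~II does she begin attaching them to the growing chain. The point is that during Stage~I the future loose ends $x_k$ do not yet exist, so Blocker cannot target the attachment edges; and each Stage~II attachment is then a clean two-move sequence ($x_kv_1$, then whichever of $x_kv_2,x_kv_3$ survives Blocker's single intervening reply), giving a genuine double threat. Blocker's only remaining leverage is to connect cherries to each other or to $x_k$ in $\B$ during Stage~I, and the paper's $\sqrt{n}$ bookkeeping (always pick a cherry joined in $\B$ to at most $\sqrt{n}$ others and not joined to $x_k$) is precisely what keeps this waste at $O(\sqrt{n})$.
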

	
	\begin{proof}[Proof of Proposition~\ref{prop : lower bound g(n,K_3,S_4)}]	
		We provide a strategy for Constructor to build at least $\frac{n}{3}-o(n)$ triangles. This strategy consists of two stages.
		
		\medskip
		
		{\bf Stage I:} During this stage, Constructor creates many many isolated cherries. While there are at least $3\sqrt{n}$ isolated vertices in $\C$, Constructor picks 3 vertices $v_1,v_2,v_3$ that are isolated in $\C$ and are not connected in $\B$. She plays $v_1v_2$. If Blocker plays $v_1v_3$, then Constructor plays $v_2v_3$, otherwise she plays $v_1v_3$. Then $v_1,v_2,v_3$ form a new cherry.
		
		Once there are less than $3\sqrt{n}$ isolated vertices in Constructor's graph, she proceeds to Stage II.
		
		\medskip
		
		{\bf Stage II:} During this stage, Constructor will recursively construct a component $C_k$, which will be a connected component of $\C$ satisfying the following properties:
		\begin{itemize}
			\item $C_k$ contains exactly $k$ triangles.
			\item $C_k$ contains exactly $3k+1$ vertices.
			\item There is a vertex $x_k$ in $C_k$ of degree $1$, connected to at most $\sqrt{n}+2$ cherries in $\B$.
		\end{itemize}
		
		Once she is not able to follow this strategy any more, she stops playing. Set $k_0$ to be equal to $k$ at the final step.
		
		\medskip
		
		We will briefly sketch how Constructor will play during Stage II. Assume, that so far Constructor could follow her strategy, thus there already exist $C_k$ and $x_k$ as described above.
		While there is a cherry that is connected to less than $\sqrt{n}$ other cherries in $\B$ and that is not connected to $x_k$ in $\B$, Constructor picks this cherry to continue her chain. We will name its vertices $v_1,v_2,v_3$, where $v_1$ is the root of this cherry (meaning that $d_{\C}(v_1)=2$). Constructor plays $x_kv_1$. If Blocker plays $x_kv_2$, then Constructor plays $x_kv_3$, otherwise she plays $x_kv_2$. Let this new connected component be $C_{k+1}$ and $x_{k+1}$ be $v_2$ or $v_3$, depending on what Constructor just played.
		
		\medskip
		
		\emph{Remark.} She starts Stage II with $C_0$ being an isolated vertex in $\C$ (namely $x_0$) and then she picks a cherry which is connected to at most $\sqrt{n}+2$ cherries in $\B$ and is not connected to $x_0$, then she follows the instructions as above. The resulting component $C_1$ is actually a triangle with a pendant leg, and its construction corresponds to the one in Lemma \ref{lemma : pendant leg}.
		
		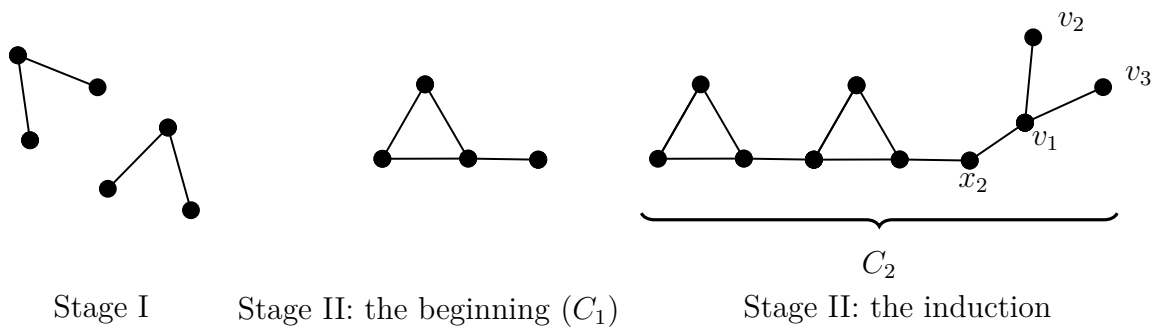
\begin{figure}[h!]
			\centering
			\tikzset{every picture/.style={line width=0.75pt}} 

\begin{tikzpicture}
	[x=0.75pt,y=0.75pt,yscale=-1,xscale=1]

	\draw    (45.56,35.33) -- (51.69,78.2) ;
	\draw    (85.75,51.46) -- (45.56,35.33) ;
	
	\draw    (121.24,71.79) -- (132.85,113.51) ;
	\draw    (90.92,102.7) -- (121.24,71.79) ;
	\draw  [color={rgb, 255:red, 0; green, 0; blue, 0 }  ,draw opacity=1 ] (272.68,87.45) -- (229.38,87.55) -- (250.94,50) -- cycle ;
	\draw    (272.68,87.45) -- (308,88) ;
	\draw [draw opacity=0]   (250.94,50) -- (308,88) ;
	\draw [draw opacity=0]   (308,88) -- (229.38,87.55) ;
	\draw  [color={rgb, 255:red, 0; green, 0; blue, 0 }  ,draw opacity=1 ] (411.68,87.45) -- (368.38,87.55) -- (389.94,50) -- cycle ;
	\draw    (411.68,87.45) -- (447,88) ;
	\draw  [color={rgb, 255:red, 0; green, 0; blue, 0 }  ,draw opacity=1 ] (490.3,87.89) -- (447,88) -- (468.56,50.45) -- cycle ;
	\draw [draw opacity=0]   (525.62,88.45) -- (447,88) ;
	\draw    (490.3,87.89) -- (525.62,88.45) ;
	\draw  [draw opacity=0] (592.89,51.39) -- (553.49,69.36) -- (557.63,26.25) -- cycle ;
	\draw    (525.62,88.45) -- (553.49,69.36) ;
	\draw    (553.49,69.36) -- (557.63,26.25) ;
	\draw    (553.49,69.36) -- (592.89,51.39) ;
	\draw    (389.94,50) -- (368.38,87.55) ;
	\draw    (468.56,50.45) -- (447,88) ;
	
	\draw (555.49,72.36) node [anchor=north west][inner sep=0.75pt]   [align=left] {$\displaystyle v_{1}$};
	\draw (568.49,12.36) node [anchor=north west][inner sep=0.75pt]   [align=left] {$\displaystyle v_{2}$};
	\draw (602.49,39.36) node [anchor=north west][inner sep=0.75pt]   [align=left] {$\displaystyle v_{3}$};
	\draw (518.49,93.36) node [anchor=north west][inner sep=0.75pt]   [align=left] {$\displaystyle x_{2}$};
	\draw[decoration={brace,amplitude=5pt}, decorate, line width=1.2pt] (600,115) -- node[below=10pt] {$C_2$} (360,115);
	\draw (62,155) node [anchor=north west][inner sep=0.75pt]   [align=left] {Stage I};
	\draw (155,155) node [anchor=north west][inner sep=0.75pt]   [align=left] {Stage II: the beginning ($C_1$)};
	\draw (410,155) node [anchor=north west][inner sep=0.75pt]   [align=left] {Stage II: the induction};
	
	\draw [fill={rgb, 255:red, 0; green, 0; blue, 0 }  ,fill opacity=1 ]  (51.69, 78.2) circle [x radius= 4, y radius= 4]   ;
	\draw [fill={rgb, 255:red, 0; green, 0; blue, 0 }  ,fill opacity=1 ]  (51.69, 78.2) circle [x radius= 4, y radius= 4]   ;
	\draw [fill={rgb, 255:red, 0; green, 0; blue, 0 }  ,fill opacity=1 ]  (45.56, 35.33) circle [x radius= 4, y radius= 4]   ;
	\draw [fill={rgb, 255:red, 0; green, 0; blue, 0 }  ,fill opacity=1 ]  (45.56, 35.33) circle [x radius= 4, y radius= 4]   ;
	\draw [fill={rgb, 255:red, 0; green, 0; blue, 0 }  ,fill opacity=1 ]  (85.75, 51.46) circle [x radius= 4, y radius= 4]   ;
	\draw [fill={rgb, 255:red, 0; green, 0; blue, 0 }  ,fill opacity=1 ]  (132.85, 113.51) circle [x radius= 4, y radius= 4]   ;
	\draw [fill={rgb, 255:red, 0; green, 0; blue, 0 }  ,fill opacity=1 ]  (121.24, 71.79) circle [x radius= 4, y radius= 4]   ;
	\draw [fill={rgb, 255:red, 0; green, 0; blue, 0 }  ,fill opacity=1 ]  (90.92, 102.7) circle [x radius= 4, y radius= 4]   ;
	\draw [fill={rgb, 255:red, 0; green, 0; blue, 0 }  ,fill opacity=1 ]  (121.24, 71.79) circle [x radius= 4, y radius= 4]   ;
	\draw [fill={rgb, 255:red, 0; green, 0; blue, 0 }  ,fill opacity=1 ]  (272.68, 87.45) circle [x radius= 4, y radius= 4]   ;
	\draw [fill={rgb, 255:red, 0; green, 0; blue, 0 }  ,fill opacity=1 ]  (272.68, 87.45) circle [x radius= 4, y radius= 4]   ;
	\draw [fill={rgb, 255:red, 0; green, 0; blue, 0 }  ,fill opacity=1 ]  (250.94, 50) circle [x radius= 4, y radius= 4]   ;
	\draw [fill={rgb, 255:red, 0; green, 0; blue, 0 }  ,fill opacity=1 ]  (250.94, 50) circle [x radius= 4, y radius= 4]   ;
	\draw [fill={rgb, 255:red, 0; green, 0; blue, 0 }  ,fill opacity=1 ]  (229.38, 87.55) circle [x radius= 4, y radius= 4]   ;
	\draw [fill={rgb, 255:red, 0; green, 0; blue, 0 }  ,fill opacity=1 ]  (229.38, 87.55) circle [x radius= 4, y radius= 4]   ;
	\draw [fill={rgb, 255:red, 0; green, 0; blue, 0 }  ,fill opacity=1 ]  (308, 88) circle [x radius= 4, y radius= 4]   ;
	\draw [fill={rgb, 255:red, 0; green, 0; blue, 0 }  ,fill opacity=1 ]  (308, 88) circle [x radius= 4, y radius= 4]   ;
	\draw [fill={rgb, 255:red, 0; green, 0; blue, 0 }  ,fill opacity=1 ]  (308, 88) circle [x radius= 4, y radius= 4]   ;
	\draw [fill={rgb, 255:red, 0; green, 0; blue, 0 }  ,fill opacity=1 ]  (411.68, 87.45) circle [x radius= 4, y radius= 4]   ;
	\draw [fill={rgb, 255:red, 0; green, 0; blue, 0 }  ,fill opacity=1 ]  (411.68, 87.45) circle [x radius= 4, y radius= 4]   ;
	\draw [fill={rgb, 255:red, 0; green, 0; blue, 0 }  ,fill opacity=1 ]  (368.38, 87.55) circle [x radius= 4, y radius= 4]   ;
	\draw [fill={rgb, 255:red, 0; green, 0; blue, 0 }  ,fill opacity=1 ]  (389.94, 50) circle [x radius= 4, y radius= 4]   ;
	\draw [fill={rgb, 255:red, 0; green, 0; blue, 0 }  ,fill opacity=1 ]  (447, 88) circle [x radius= 4, y radius= 4]   ;
	\draw [fill={rgb, 255:red, 0; green, 0; blue, 0 }  ,fill opacity=1 ]  (447, 88) circle [x radius= 4, y radius= 4]   ;
	\draw [fill={rgb, 255:red, 0; green, 0; blue, 0 }  ,fill opacity=1 ]  (447, 88) circle [x radius= 4, y radius= 4]   ;
	\draw [fill={rgb, 255:red, 0; green, 0; blue, 0 }  ,fill opacity=1 ]  (490.3, 87.89) circle [x radius= 4, y radius= 4]   ;
	\draw [fill={rgb, 255:red, 0; green, 0; blue, 0 }  ,fill opacity=1 ]  (490.3, 87.89) circle [x radius= 4, y radius= 4]   ;
	\draw [fill={rgb, 255:red, 0; green, 0; blue, 0 }  ,fill opacity=1 ]  (447, 88) circle [x radius= 4, y radius= 4]   ;
	\draw [fill={rgb, 255:red, 0; green, 0; blue, 0 }  ,fill opacity=1 ]  (468.56, 50.45) circle [x radius= 4, y radius= 4]   ;
	\draw [fill={rgb, 255:red, 0; green, 0; blue, 0 }  ,fill opacity=1 ]  (468.56, 50.45) circle [x radius= 4, y radius= 4]   ;
	\draw [fill={rgb, 255:red, 0; green, 0; blue, 0 }  ,fill opacity=1 ]  (525.62, 88.45) circle [x radius= 4, y radius= 4]   ;
	\draw [fill={rgb, 255:red, 0; green, 0; blue, 0 }  ,fill opacity=1 ]  (525.62, 88.45) circle [x radius= 4, y radius= 4]   ;
	\draw [fill={rgb, 255:red, 0; green, 0; blue, 0 }  ,fill opacity=1 ]  (447, 88) circle [x radius= 4, y radius= 4]   ;
	\draw [fill={rgb, 255:red, 0; green, 0; blue, 0 }  ,fill opacity=1 ]  (553.49, 69.36) circle [x radius= 4, y radius= 4]   ;
	\draw [fill={rgb, 255:red, 0; green, 0; blue, 0 }  ,fill opacity=1 ]  (553.49, 69.36) circle [x radius= 4, y radius= 4]   ;
	\draw [fill={rgb, 255:red, 0; green, 0; blue, 0 }  ,fill opacity=1 ]  (553.49, 69.36) circle [x radius= 4, y radius= 4]   ;
	\draw [fill={rgb, 255:red, 0; green, 0; blue, 0 }  ,fill opacity=1 ]  (557.63, 26.25) circle [x radius= 4, y radius= 4]   ;
	\draw [fill={rgb, 255:red, 0; green, 0; blue, 0 }  ,fill opacity=1 ]  (553.49, 69.36) circle [x radius= 4, y radius= 4]   ;
	\draw [fill={rgb, 255:red, 0; green, 0; blue, 0 }  ,fill opacity=1 ]  (553.49, 69.36) circle [x radius= 4, y radius= 4]   ;
	\draw [fill={rgb, 255:red, 0; green, 0; blue, 0 }  ,fill opacity=1 ]  (592.89, 51.39) circle [x radius= 4, y radius= 4]   ;
\end{tikzpicture}
			\caption{Constructor's strategy when $S_4$ is forbidden}
			\label{fig : K_3-S_4 : C strat}
		\end{figure}
		
		This strategy is illustrated in Figure \ref{fig : K_3-S_4 : C strat}. We now show that Constructor can indeed play according to this strategy.
		
		\medskip
		
		{\bf Strategy discussion:}
		
		We will start with a claim, which shows that Constructor can create many cherries.
		
		\begin{claim}
			For $n$ large enough, Constructor can follow Stage I.
		\end{claim}
		
		\begin{claimproof}
			During Stage I, write $L$ the set of isolated vertices in $\C$. Let $\ell:=|L|$, and assume $\ell\geq 3\sqrt{n}$. The number of sets $\{v_1,v_2,v_3\}$ with all vertices isolated in $\C$ but somehow connected in $\B$ is at most
			$$
			\sum_{S\in L^3}\sum_{e \in E(\B)}1_{\{e \in S^2\}} \leq \sum_{e \in E(\B)}\ell \leq n\ell < \binom{\ell}{3}
			$$
			for $n$ large enough. Indeed an edge can be in at most $\ell$ sets of three vertices of $L$, and since in Stage I Constructor claims less than $n$ edges, so does Blocker. Thus there exists a set of three vertices of $L$ that are not connected in any way in $\B$.
		\end{claimproof}
		
		\medskip
		
		Now we show that Constructor is able to find an isolated vertex to play the first move of Stage~II afterwards.
		
		\begin{claim}
			For $n$ large enough, Constructor can construct $C_0$.
		\end{claim}
		
		\begin{claimproof}
			At the end of Stage I, there are at least $3\sqrt{n}-3$ vertices that are isolated in $\C$. Since Blocker has played less than $n$ moves during Stage I, the number of vertices that are connected in $\B$ to at least $\sqrt{n}+3$ cherries is at most $\frac{n}{\sqrt{n}+3} < 3\sqrt{n}-3$ (for $n$ large enough). Thus there is a vertex that is isolated in $\C$ and not connected to more than $\sqrt{n}+2$ cherries in $\B$. Let it be $x_0$.
		\end{claimproof}
		
		\medskip
		
		Lastly, let us show that Constructor can build a chain of triangles.
		
		\begin{claim}
			For $n$ large enough, Constructor can follow Stage II.
		\end{claim}
		
		\begin{claimproof}
			By the previous claim, we can construct $C_0$. Now assume that, for some $k$, $C_k$ is constructed (and satisfies the required properties). Assume that there is a cherry that is both connected to less than $\sqrt{n}$ other cherries in $\B$ and that is not connected to $x_k$ in $\B$.
			The construction of $C_{k+1}$ guarantees the first two properties. Since $x_{k+1}$ was part of the selected cherry, it was not connected to more than $\sqrt{n}$ other cherries in $\B$ at that time. Constructor played two edges, thus $x_{k+1}$ is connected to at most $\sqrt{n}+2$ cherries after $C_{k+1}$ is constructed.		
		\end{claimproof}
		
		\medskip
		
		We check that the strategy is valid, meaning that Constructor does not create an $S_4$.
		
		\begin{claim}
			When Constructor follows this strategy, her graph $\C$ remains $S_4$-free.
		\end{claim}
		
		\begin{claimproof}
			A cherry contains no vertex with degree more than $3$, so $\C$ is $S_4$-free at the end of Stage I. When constructing $C_{k+1}$, Constructor first plays an edge between a vertex of degree one ($x_k$) and a vertex of degree 2 ($v_1$), and next an edge between a vertex of degree 2 ($x_k$) and a vertex of degree 1 ($v_2$ or $v_3$). Therefore all vertices remain of degree at most $3$.
		\end{claimproof}
		
		\medskip
		
		Finally, let us show that Constructor created at least $\frac{n}{3}(1+o(1))$ triangles this way.
		
		\begin{claim}
			Constructor can follow Stage II for $k_0 \geq \frac{n}{3}(1+o(1))$ steps.
		\end{claim}
		
		\begin{claimproof}
			We will count the number of vertices that are not in a triangle at the end of the game. There are three possibilities for this to happen: either it was not put into a cherry during Stage I, or it was in a cherry but this cherry was not used by Constructor during Stage II or it is $x_{k_0}$.
			
			Let $\ell_1$ be the number of isolated vertices in $\C$ at the end of Stage I and $\ell_2$ be the number of isolated cherries in $\C$ at the end of Stage II. We know that $\ell_1 < 3\sqrt{n}$. Let us estimate $\ell_2$.
			
			When Constructor ends Stage II, either all of the remaining cherries are connected to at least $\sqrt{n}$ cherries in $\B$, or all of the remaining cherries are connected to $x_{k_0}$ in $\B$. In the first case, Blocker has claimed at least $\frac{\ell_2\sqrt{n}}{2}$ edges between cherries. Since Constructor has claimed less than $\frac23(n-3\sqrt{n}+3) \leq n$ edges during Stage I, and less than $n$ edges during Stage II, we know that $\frac{\ell_2\sqrt{n}}{2} \leq 2n$ and thus $\ell_2 \leq 4\sqrt{n}$. In the second case, since by assumption $x_{k_0}$ is connected to at most $\sqrt{n}+2$ cherries in $\B$, $\ell_2 \leq \sqrt{n}+2$.
			In both cases, $\ell_2 \leq 4\sqrt{n}$, thus the number of vertices in $C_{k_0}$ is at least $n - 3\sqrt{n}-3\cdot4\sqrt{n} - 1 = n - 15\sqrt{n}-1$. Hence $k_0 \geq \frac{n - 15\sqrt{n}-1}{3} = \frac{n}{3}(1+o(1))$.
		\end{claimproof}
		
		\medskip
		
		This claim concludes the proof of Proposition \ref{prop : lower bound g(n,K_3,S_4)}.	
	\end{proof}
	
	\medskip
	
	To obtain the upper bound, we show that Blocker can prevent every vertex from being in several triangles, thus upper bounding the total number of triangles at $\frac{n}{3}$. The details are trickier, since Blocker's strategy has to tackle many cases. We will state this as a new proposition as well:
	
	\begin{prop}\label{prop : upper bound g(n,K_3,S_4)}
		Blocker has a strategy to ensure that the following holds:
		$$
		g(n,K_3,S_4) \leq \frac{n}{3}(1+o(1)) \, .
		$$
	\end{prop}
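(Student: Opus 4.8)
The plan is to reduce the statement to a structural fact about graphs of maximum degree $3$, together with a Blocker strategy that keeps Constructor's graph (almost) free of diamonds, where by a \emph{diamond} I mean $K_4$ minus an edge.

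\emph{Structural reduction.} Since $\C$ is $S_4$-free it has maximum degree at most $3$. If a vertex $v$ lies in two distinct triangles of $\C$, those triangles cannot meet in $v$ alone (that would force $d_\C(v)\ge 4$), so they share an edge and their union is a diamond; conversely, the two degree-$3$ vertices on the shared edge of a diamond each lie in two triangles. Call a vertex \emph{rich} if it lies in at least two triangles of $\C$, and let $r$ be the number of rich vertices. Counting incidences between vertices and triangles, and using that a degree-$3$ vertex lies in at most $\binom{3}{2}=3$ triangles, we get $3\cdot(\#\text{triangles})=\sum_v(\#\text{triangles containing }v)\le (n-r)\cdot 1 + r\cdot 3 = n+2r$, hence $\#\text{triangles}\le \tfrac n3 + \tfrac{2r}{3}$. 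So it suffices to give Blocker a strategy forcing $r=o(n)$, equivalently keeping the number of diamonds in $\C$ sublinear.

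\emph{Blocker's strategy.} Blocker answers in the connected component of $\C$ in which Constructor just played whenever possible (as in Lemma~\ref{lemma : play in the same cc}), and inside that component follows a priority list aimed at killing diamond threats before they become unstoppable. Call a free edge $e$ a \emph{threat} if Constructor claiming $e$ would create a new triangle sharing an edge with an existing triangle of $\C$ (hence a diamond). The configurations that spawn threats are an isolated triangle gaining a pendant, or more generally a short path closing into a triangle with a pendant tail; the dangerous feature is that a single Constructor move can leave \emph{two} threats at once (connecting the pendant vertex to either endpoint of the would-be shared edge), and then Blocker cannot block both. Blocker's priorities are therefore: (1) if Constructor just completed an isolated cherry or isolated $P_3$, claim its closing edge; (2) if there is a threat edge, claim it — maintaining the invariant that there is never more than one threat edge at a time, so this is always possible; (3) if Constructor's last move produced a new path or near-triangle that could \emph{later} branch into a double threat, claim one well-chosen edge that pre-emptively destroys one of the two future threats — for instance, when a ``clean'' $P_4$ $a\text{-}b\text{-}c\text{-}d$ (endpoints of degree $1$, midpoints of degree $2$) appears, Blocker claims $ac$, which kills the ``$ac$'' threat and, in the surviving branch where Constructor plays $bd$, pre-blocks one of the two diamond-completing edges so that only a single threat remains; when a clean $P_5$ appears Blocker claims its central edge, with the same effect; (4) otherwise play arbitrarily. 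One then checks by induction on the number of moves that this invariant (at most one threat edge, and no new diamond) is preserved.

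\emph{Main obstacle.} The heart of the proof is step (3) and the invariant check: one must enumerate the ways a single Constructor move can produce a structure carrying a potential double threat — isolated triangle plus pendant, $P_4$, $P_5$, a triangle with a short tail, two near-triangles about to be joined, and so on — and in each case exhibit one Blocker edge that both handles the immediate situation and prevents the later branching into two independent threats. This is the ``many cases'' the statement alludes to. A bounded number of configurations (forced by parity at the start of the game, or arising when Blocker is momentarily unable to answer in the relevant component, as allowed by Lemma~\ref{lemma : play in the same cc}) may still let a diamond slip through, so one only obtains $r=o(n)$ rather than $r=0$; this is exactly why the bound is $\frac n3(1+o(1))$ and not $\lfloor n/3\rfloor$. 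Substituting $r=o(n)$ into the inequality of the structural reduction gives $g(n,K_3,S_4)\le \frac n3(1+o(1))$, and combined with Proposition~\ref{prop : lower bound g(n,K_3,S_4)} this proves Theorem~\ref{thm : g(n,K_3,S_4)}.
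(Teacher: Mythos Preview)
Your structural reduction is correct and matches the paper's endgame: in an $S_4$-free graph, a vertex lies in two triangles only if it is one of the two degree-$3$ vertices of a diamond, so bounding diamonds bounds triangles. The paper in fact proves the stronger statement that Blocker can prevent \emph{every} diamond (so $r=0$ and the bound is exactly $\lfloor n/3\rfloor$), not just $r=o(n)$.

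The genuine gap is that you have not proved anything about your strategy. You write ``one then checks by induction on the number of moves that this invariant \ldots\ is preserved'' and ``this is the `many cases' the statement alludes to'', but that check \emph{is} the proof; without it you have only a plan. Moreover, your proposed priorities are not obviously sound. Rule~(1) (close an isolated $P_3$) is irrelevant to diamond prevention. Your rule~(3) examples (claim $ac$ in a clean $P_4$ $a\text{-}b\text{-}c\text{-}d$; claim the central edge of a clean $P_5$) do not cover the configurations that actually generate double threats---for instance a $P_3$ that already has a pendant at its root, or two components being joined, or a triangle receiving its second pendant---and it is exactly these that the paper's case analysis handles. Your explanation for why a few diamonds might ``slip through'' (parity, Blocker unable to answer in the component) is speculative: Lemma~\ref{lemma : play in the same cc} does not say Blocker is ever forced to skip a component, and the paper's strategy in fact never lets a diamond through.

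By contrast, the paper specifies a concrete priority function on free edges (ranked by how many triangles the edge would create, then by the degrees of its endpoints), has Blocker always answer adjacent to Constructor's last move with lowest priority number, and then carries out an exhaustive case split on the last edge of a hypothetical diamond and on the degrees of its two degree-$\le 3$ vertices, showing in each branch that Blocker already blocked some edge of the diamond. That exhaustive verification is precisely what your proposal is missing.
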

	
	\begin{proof}[Proof of Proposition~\ref{prop : upper bound g(n,K_3,S_4)}]
		We introduce a criterion that we call \emph{priority}. Blocker will play according to Lemma~\ref{lemma : play in the same cc}, but we will use priority to help Blocker choose the exact edge he should play. In this context, we say that an edge is \emph{free} if it has not been played by any of the two players and if its endvertices are in the same connected component of $\C$ and have degree 1 or 2. Let $e = uv$ be a free edge. Its priority number is:
		\begin{itemize}
			\item[(1)~] if $\C \cup \{e\}$ has 2 more triangles than $\C$.
			\item[(2)~] if (1) is not satisfied and $d_{\C}(u) = d_{\C}(v) = 1$.
			\item[(3)~] if (1) is not satisfied and exactly one vertex among $u,v$ has degree 1 in $\C$.
			\item[(4)~] if (1) is not satisfied and $d_{\C}(u) = d_{\C}(v) = 2$.
		\end{itemize}
		
		
		Assume Constructor plays the edge $e$. If there is a free edge that might complete a triangle with $e$, then Blocker plays it. If there are several such free edges, then he plays the one with the smallest priority number (breaking the ties arbitrarily). If Constructor just completed a triangle $t$ with $e$ and if there is no free edge that might complete another triangle with $e$, then Blocker looks for a free edge that might complete a triangle adjacent to $t$. If there is one, then Blocker blocks it (if there are several such edges, he uses the priority order). Otherwise he plays arbitrarily.
		
		\begin{claim}
			If Blocker follows this strategy, then at any time during the game, at most one edge has priority number (1).
		\end{claim}
		
		\begin{claimproof}
			Since such an edge is immediately claimed by Blocker when it appears, it is enough to verify that two edges of priority (1) cannot appear simultaneously. Edges of priority (1) are diagonals of $C_4$. If an edge is in two $C_4$, then none of the diagonals can be played by Constructor (because at least one of the endvertices has degree 3). Therefore the only way for two edges of priority (1) to appear simultaneously is if Constructor completes an isolated $C_4$ with no diagonal blocked. However, since Blocker has played in the same connected component as Constructor, he necessarily has claimed one of the diagonals before this $C_4$ was completed. Hence the claim is proven.
		\end{claimproof}
		
		\medskip
		
		We now show that each edge in $\C$ cannot be in several triangles. It is enough to show that there is no \emph{diamond} in $\C$ (i.e.~a $C_4$ with one diagonal, see Figure \ref{fig : K_3-S_4 : B strat} for a drawing) whenever Blocker follows the strategy described above.
		
		\begin{claim}
			If Blocker follows this strategy, then no diamond can occur in $\C$.
		\end{claim}
		
		\begin{claimproof}
			Consider the four vertices $v_1,v_2,v_3,v_4$ and the five edges $v_1v_2,v_2v_3,v_3v_4,v_1v_4,v_1v_3$, making a diamond (see Figure \ref{fig : K_3-S_4 : B strat}). We want to show that these edges cannot all be claimed by Constructor. If this were to happen, then vertices $v_2$ and $v_4$ might have degrees 2 or 3. We will consider these different cases, one of which is illustrated in Figure \ref{fig : K_3-S_4 : B strat}.

			\begin{itemize}
				\item {\bf Case 1:} $d_{\C}(v_2) = d_{\C}(v_4) = 2$.
				
				Since the connected component of $v_1,v_2,v_3,v_4$ is of size four, Blocker has always answered in it whenever Constructor has played in it. Therefore, by Lemma \ref{lemma : play in the same cc}, the five edges cannot all be claimed by Constructor during the game.
				
				\item {\bf Case 2:} $d_{\C}(v_2) = 3$, $d_{\C}(v_4) = 2$.
				
				\begin{itemize}
					\item If Constructor manages to play all the edges but $v_1v_3$, then Blocker will block this edge because it will be the only one with priority number (1) (by the first claim).
					\item If Constructor manages to play all the edges but $v_1v_4$, then let $e$ be the last edge played by Constructor among $v_1v_2,v_2v_3,v_3v_4,v_1v_3$.
					\begin{itemize}
						\item If $e = v_1v_3$ or $v_3v_4$, then after this move the only free edge that might complete a triangle with $e$ is $v_1v_4$, so Blocker claims it. This case is illustrated in Figure \ref{fig : K_3-S_4 : B strat}.
						\item If $e = v_1v_2$, then we look back at the time when the last edge between $v_1v_3$ and $v_3v_4$ was played by Constructor. If $v_2$ had degree 2, then $v_1v_4$ was the only edge with priority (2), so Blocker has already claimed it. Otherwise, observing the first case, Blocker would have played two edges among $v_1v_2,v_2v_4,v_1v_4$. Since Constructor played $v_1v_2$ later, Blocker necessarily played $v_1v_4$.
						\item If $e = v_2v_3$, then as before we look back at the time when the last edge between $v_1v_3$ and $v_3v_4$ was played by Constructor. In order to prevent a triangle, Blocker has already played $v_1v_4$ or $v_2v_3$. Since Constructor played $v_2v_3$ later, Blocker necessarily played $v_1v_4$ at that time.
					\end{itemize}
					\item If Constructor manages to play all the edges but $v_1v_2$, then let $e$ be the last edge played by Constructor among $v_2v_3,v_3v_4,v_1v_4,v_1v_3$.
					\begin{itemize}
						\item $e$ cannot be $v_2v_3$ because Constructor cannot build an isolated triangle.
						\item If $e=v_1v_3$, then when Constructor played it Blocker necessarily played $v_1v_2$.
						\item If $e = v_1v_4$, then $v_2$ had degree 1 when all other three edges were claimed (otherwise Blocker would have played $v_1v_4$ because this would have been the only edge with priority (1)). But this is covered by Case 1.
						\item If $e = v_3v_4$, then when the last edge between $v_1v_3$ and $v_1v_4$ was played, Blocker necessarily played $v_1v_2$ or $v_3v_4$ (and it cannot be $v_3v_4$ because this was played by Constructor later).
					\end{itemize}   
				\end{itemize}
				\item {\bf Case 3:} $d_{\C}(v_2) = d_{\C}(v_4) = 3$.
				\begin{itemize}
					\item If Constructor manages to play all the edges but $v_1v_3$, then again Blocker will block it because of the first claim.
					\item Otherwise, by symmetry, assume that Constructor manages to play all the edges but $v_1v_4$ and let $e$ be the last edge played by Constructor among $v_1v_2,v_2v_3,v_3v_4,v_1v_3$.
					\begin{itemize}
						\item If $e = v_1v_3$ or $v_3v_4$, then after this move the only free edge that might complete a triangle with $e$ is $v_1v_4$, thus Blocker claims it. Indeed $v_3$ already has degree 3.
						\item If $e = v_1v_2$, then we look back at the time when the last edge between $v_1v_3, v_2v_3$, and $v_3v_4$ was played by Constructor. If at that time both $v_2$ and $v_4$ had degree one, then at the end of his turn Blocker would have claimed two edges among $v_1v_2,v_1v_4$, and $v_2v_4$. If it is not the case then because of the priority order Blocker claimed $v_1v_2$ or $v_1v_4$, which means that he claimed $v_1v_4$ because we assumed that the other one was played by Constructor later.
						\item If $e = v_2v_3$, then let $v_5$ be the third vertex adjacent to $v_2$ (at the end of the process) and look back at when the last edge between $v_1v_2$ and $v_1v_3$ was played by Constructor. Blocker then either claimed $v_1v_4, v_2v_3,$ or $v_1v_5$. In the latter case, when Constructor plays $v_2v_3$ Blocker will answer according to the part of his strategy when there is no free edge that might complete a triangle with $v_2v_3$ (because $v_3$ will have degree 3), and therefore he will play $v_1v_4$.
					\end{itemize}
				\end{itemize}
			\end{itemize}
			
			\begin{figure}[h!]
				\centering
				\begin{tikzpicture}[x=0.75pt,y=0.75pt,yscale=-1,xscale=1]

\draw    (96,77.33) -- (61,32) ;
\draw    (26,77.33) -- (61,32) ;
\draw    (96,77.33) -- (26,77.33) ;
\draw    (63.74,1.74) .. controls (75.74,16.74) and (48.74,19.74) .. (61,32) ;
\draw    (96,77.33) -- (61,122.67) ;
\draw    (26,77.33) -- (61,122.67) ;
\draw    (63.74,152.92) .. controls (75.74,137.92) and (48.74,134.92) .. (61,122.67) ;
\draw    (273,77.33) -- (238,32) ;
\draw [draw opacity=0]   (203,77.33) -- (238,32) ;
\draw [color={rgb, 255:red, 208; green, 2; blue, 27 }  ,draw opacity=1 ][line width=1.5]    (273,77.33) -- (203,77.33) ;
\draw    (273,77.33) -- (238,122.67) ;
\draw    (203,77.33) -- (238,122.67) ;
\draw    (240.74,152.92) .. controls (252.74,137.92) and (225.74,134.92) .. (238,122.67) ;
\draw    (441,77.33) -- (406,32) ;
\draw [color={rgb, 255:red, 0; green, 0; blue, 0 }  ,draw opacity=1 ][line width=0.75]  [dash pattern={on 0.84pt off 2.51pt}]  (371,77.33) -- (406,32) ;
\draw [color={rgb, 255:red, 208; green, 2; blue, 27 }  ,draw opacity=1 ][line width=1.5]    (441,77.33) -- (371,77.33) ;
\draw    (441,77.33) -- (406,122.67) ;
\draw    (371,77.33) -- (406,122.67) ;
\draw    (408.74,152.92) .. controls (420.74,137.92) and (393.74,134.92) .. (406,122.67) ;
\draw    (297,63) -- (343,63) ;
\draw [shift={(346,63)}, rotate = 180] [fill={rgb, 255:red, 0; green, 0; blue, 0 }  ][line width=0.08]  [draw opacity=0] (8.93,-4.29) -- (0,0) -- (8.93,4.29) -- cycle    ;

\draw (3,73) node [anchor=north west][inner sep=0.75pt]   [align=left] {$\displaystyle v_{1}$};
\draw (71,114) node [anchor=north west][inner sep=0.75pt]   [align=left] {$\displaystyle v_{2}$};
\draw (105,73) node [anchor=north west][inner sep=0.75pt]   [align=left] {$\displaystyle v_{3}$};
\draw (70,17) node [anchor=north west][inner sep=0.75pt]   [align=left] {$\displaystyle v_{4}$};
\draw (180,73) node [anchor=north west][inner sep=0.75pt]   [align=left] {$\displaystyle v_{1}$};
\draw (248,114) node [anchor=north west][inner sep=0.75pt]   [align=left] {$\displaystyle v_{2}$};
\draw (282,73) node [anchor=north west][inner sep=0.75pt]   [align=left] {$\displaystyle v_{3}$};
\draw (247,17) node [anchor=north west][inner sep=0.75pt]   [align=left] {$\displaystyle v_{4}$};
\draw (348,73) node [anchor=north west][inner sep=0.75pt]   [align=left] {$\displaystyle v_{1}$};
\draw (416,114) node [anchor=north west][inner sep=0.75pt]   [align=left] {$\displaystyle v_{2}$};
\draw (450,73) node [anchor=north west][inner sep=0.75pt]   [align=left] {$\displaystyle v_{3}$};
\draw (415,17) node [anchor=north west][inner sep=0.75pt]   [align=left] {$\displaystyle v_{4}$};

\draw [fill={rgb, 255:red, 0; green, 0; blue, 0 }  ,fill opacity=1 ]  (61, 32) circle [x radius= 3, y radius= 3]   ;
\draw [fill={rgb, 255:red, 0; green, 0; blue, 0 }  ,fill opacity=1 ]  (26, 77.33) circle [x radius= 3, y radius= 3]   ;
\draw [fill={rgb, 255:red, 0; green, 0; blue, 0 }  ,fill opacity=1 ]  (26, 77.33) circle [x radius= 3, y radius= 3]   ;
\draw [fill={rgb, 255:red, 0; green, 0; blue, 0 }  ,fill opacity=1 ]  (26, 77.33) circle [x radius= 3, y radius= 3]   ;
\draw [fill={rgb, 255:red, 0; green, 0; blue, 0 }  ,fill opacity=1 ]  (61, 122.67) circle [x radius= 3, y radius= 3]   ;
\draw [fill={rgb, 255:red, 0; green, 0; blue, 0 }  ,fill opacity=1 ]  (61, 122.67) circle [x radius= 3, y radius= 3]   ;
\draw [fill={rgb, 255:red, 0; green, 0; blue, 0 }  ,fill opacity=1 ]  (96, 77.33) circle [x radius= 3, y radius= 3]   ;
\draw [fill={rgb, 255:red, 0; green, 0; blue, 0 }  ,fill opacity=1 ]  (96, 77.33) circle [x radius= 3, y radius= 3]   ;
\draw [fill={rgb, 255:red, 0; green, 0; blue, 0 }  ,fill opacity=1 ]  (238, 32) circle [x radius= 3, y radius= 3]   ;
\draw [fill={rgb, 255:red, 0; green, 0; blue, 0 }  ,fill opacity=1 ]  (203, 77.33) circle [x radius= 3, y radius= 3]   ;
\draw [fill={rgb, 255:red, 0; green, 0; blue, 0 }  ,fill opacity=1 ]  (203, 77.33) circle [x radius= 3, y radius= 3]   ;
\draw [fill={rgb, 255:red, 0; green, 0; blue, 0 }  ,fill opacity=1 ]  (203, 77.33) circle [x radius= 3, y radius= 3]   ;
\draw [fill={rgb, 255:red, 0; green, 0; blue, 0 }  ,fill opacity=1 ]  (238, 122.67) circle [x radius= 3, y radius= 3]   ;
\draw [fill={rgb, 255:red, 0; green, 0; blue, 0 }  ,fill opacity=1 ]  (238, 122.67) circle [x radius= 3, y radius= 3]   ;
\draw [fill={rgb, 255:red, 0; green, 0; blue, 0 }  ,fill opacity=1 ]  (273, 77.33) circle [x radius= 3, y radius= 3]   ;
\draw [fill={rgb, 255:red, 0; green, 0; blue, 0 }  ,fill opacity=1 ]  (273, 77.33) circle [x radius= 3, y radius= 3]   ;
\draw [fill={rgb, 255:red, 0; green, 0; blue, 0 }  ,fill opacity=1 ]  (406, 32) circle [x radius= 3, y radius= 3]   ;
\draw [fill={rgb, 255:red, 0; green, 0; blue, 0 }  ,fill opacity=1 ]  (371, 77.33) circle [x radius= 3, y radius= 3]   ;
\draw [fill={rgb, 255:red, 0; green, 0; blue, 0 }  ,fill opacity=1 ]  (371, 77.33) circle [x radius= 3, y radius= 3]   ;
\draw [fill={rgb, 255:red, 0; green, 0; blue, 0 }  ,fill opacity=1 ]  (371, 77.33) circle [x radius= 3, y radius= 3]   ;
\draw [fill={rgb, 255:red, 0; green, 0; blue, 0 }  ,fill opacity=1 ]  (406, 122.67) circle [x radius= 3, y radius= 3]   ;
\draw [fill={rgb, 255:red, 0; green, 0; blue, 0 }  ,fill opacity=1 ]  (406, 122.67) circle [x radius= 3, y radius= 3]   ;
\draw [fill={rgb, 255:red, 0; green, 0; blue, 0 }  ,fill opacity=1 ]  (441, 77.33) circle [x radius= 3, y radius= 3]   ;
\draw [fill={rgb, 255:red, 0; green, 0; blue, 0 }  ,fill opacity=1 ]  (441, 77.33) circle [x radius= 3, y radius= 3]   ;
\end{tikzpicture}
				\caption{Support for the proof of Proposition \ref{prop : upper bound g(n,K_3,S_4)}: the diamond, and one of the many cases. The curved lines on the first diamond represent edges that might or might not be there. Constructor's last move is in bold red, and Blocker's answer is in dotted line.}
				\label{fig : K_3-S_4 : B strat}
			\end{figure}
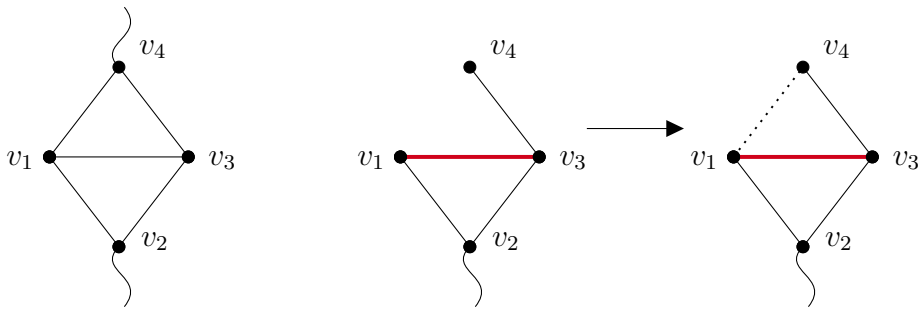
			In all cases, Blocker can prevent a diamond from appearing in $\C$.
		\end{claimproof}
		
		\medskip
		
		Since Constructor's graph will not contain any diamond, each edge in $\C$ can be in at most one triangle. Since each vertex has degree at most $3$ in $\C$, it cannot be in two triangles. This implies that the number of triangles in $\C$ is at most $\frac{n}{3}$.
	\end{proof}
	
	\medskip
	
	\subsubsection{Forbidding $S_5$}
	
	In this section we will prove Theorem~\ref{thm : g(n,K_3,S_5)}. In contrary to the previous case of forbidding $S_4$, here our bounds are not tight. 
	As before we split the proof into two parts and the proofs of two propositions, from which Theorem~\ref{thm : g(n,K_3,S_5)} will then trivially follow. Let us start with the lower bound and a strategy for Constructor.
	
	\medskip
	
	In order to obtain the lower bound, we will reuse the big chain of triangles created via the method presented in the proof of Theorem~\ref{thm : g(n,K_3,S_4)} and show that when we allow Constructor to have vertices of degree $4$, she can roughly double the number of triangles by connecting vertices of this chain in a certain way. We will state the result as a new proposition:
	
	\begin{prop}\label{prop : lower bound g(n,K_3,S_5)}
		Constructor has a strategy to ensure that the following holds:
		$$
		g(n,K_3,S_5) \geq \frac{2n}{3}(1+o(1)) \, .
		$$
	\end{prop}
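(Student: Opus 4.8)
The plan is to first replay, essentially verbatim, the two-stage strategy from the proof of Proposition~\ref{prop : lower bound g(n,K_3,S_4)}. Since every $S_4$-free graph is $S_5$-free, Constructor may legally build the same component: after a cherry-building stage and a recursive chain-building stage she obtains a connected component $C_{k_0}$ of $\C$ with exactly $k_0 = (1+o(1))\frac n3$ triangles on $3k_0+1$ vertices, all but $O(\sqrt n)$ of the vertices of $K_n$ participating, and every vertex having degree at most $3$. Recall the shape of this chain: its triangles $T_1,\dots,T_{k_0}$ are strung together so that consecutive triangles $T_j,T_{j+1}$ are joined by a single edge $e_j=h_jx_j$, where $h_j$ (the ``hub'' of $T_j$) has degree $3$, $x_j$ is the unique vertex of $T_{j+1}$ incident to $e_j$ and also has degree $3$, while the third vertex $m_j$ of $T_j$ has degree $2$.

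The new ingredient is a third stage in which Constructor exploits that, as we now forbid $S_5$ instead of $S_4$, vertices of degree $4$ are allowed. The key observation is that for every joining edge $e_j=h_jx_j$ one can create a \emph{new} triangle by adding a single edge, in several different ways, all staying within degree $4$: the edge $m_jx_j$ yields the triangle $h_jx_jm_j$, the edge $m_{j+1}h_j$ yields $h_jx_jm_{j+1}$, and similarly $x_{j-1}x_j$ and $h_{j+1}h_j$ each yield a triangle on $e_j$. Two of these options use the degree-$2$ vertices $m_j,m_{j+1}$ (which have spare capacity), and, crucially, no single free edge of $K_n$ is incident to vertices of two different options of the same $e_j$ — the relevant ``crossing'' edges ($h_jx_j$, $h_jm_j$, $x_jm_{j+1}$, \dots) already belong to $\C$. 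Constructor interleaves the doubling with the chain: right after building $T_{j+1}$ she plays an available doubling edge for $e_j$, and, more generally, whenever Blocker plays an edge that removes a doubling option at some not-yet-doubled $e_i$, she immediately answers by playing a still-available option at $e_i$. The intended conclusion is that she creates a second triangle on all but $o(n)$ of the joining edges, so that the final count is $k_0+(1-o(1))k_0=(1+o(1))\frac{2n}{3}$; the graph stays $S_5$-free since every vertex ends with degree at most $4$. As in Lemma~\ref{lemma : pendant leg}, the very first triangle of the chain and its first doubling need a small ad-hoc threat argument, contributing only $o(n)$.

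The delicate part — and the technical heart of the argument — is this third stage. A single Blocker edge touching one of the degree-$3$ chain vertices $h_j$ or $x_j$, or an edge between two low-degree chain vertices, can shrink the set of available doubling options at more than one joining edge at once, and Constructor can respond to only one of them per turn; moreover the options for different $e_j$ partially share the scarce remaining degree at the hubs and at the $x_j$'s, so they can collide with one another. One must therefore pin down exactly which joining edge Constructor doubles at each step and which option she uses, and then verify — by a global count of the $O(n)$ relevant moves of both players together with a bound on the free degree remaining at each vertex — that Blocker can spoil only $o(n)$ of the $\approx k_0$ doublings. I expect that organising this responsive strategy so that no threatened doubling is lost (and, if a naive interleaving is not enough, introducing enough extra redundant options at each $e_j$) is where essentially all the work lies; the rest is bookkeeping analogous to the $S_4$ case.
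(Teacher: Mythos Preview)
Your plan has a genuine gap, and the approach does not reach $\tfrac{2n}{3}$.

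The problem is that the four ``doubling options'' at a joining edge $e_j$ can all be killed by Blocker before your interleaved doubling move arrives. Concretely: option~1, $m_jx_j$, is exactly the leaf--leaf edge of cherry $j$; during Stage~I, after Constructor's second move on each cherry the two leaves are known, and Blocker spends that reply on the leaf--leaf edge. So option~1 is gone for every cherry before the chain is even built. In the interleaved block ``build $T_{j+1}$, then double $e_j$'' Blocker has three replies, and he can time them as: after $T_j$ is complete (his last reply of block $j-1$) play $x_{j-1}x_j$ (option~3); after Constructor reveals $h_{j+1}$ (his first reply of block $j$) play $h_{j+1}h_j$ (option~4); after Constructor reveals $m_{j+1}$ (his second reply) play $m_{j+1}h_j$ (option~2). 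All four options are blocked before your doubling move. Your responsive rule does not help: when option~4 is hit, options~2 is not yet defined and options~1,~3 are already dead, so there is nothing to respond with. If Constructor instead spends the failed doubling move on option~3 for $e_{j+1}$, she does get that triangle, but this saturates $x_{j+1}$ and forbids option~3 for $e_{j+2}$; the pattern stabilises at doubling only every second joining edge, i.e.\ about $k_0/2$ extra triangles, giving $\tfrac{3k_0}{2}\sim\tfrac{n}{2}$, not $\tfrac{2n}{3}$.

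The paper avoids this by abandoning locality. After building the full $S_4$-chain, it pairs \emph{arbitrary} triangles $T_i,T_j$ through their degree-$2$ apices $s_i,s_j$: play $s_is_j$, then with two more moves (using the paired options $\{s_il_j,s_ir_j\}$ and $\{s_jl_i,s_jr_i\}$) create two new triangles. The point is that the pair $(i,j)$ is chosen only when no Blocker edge yet runs between $\{s_i,l_i,r_i\}$ and $\{s_j,l_j,r_j\}$; since any Blocker edge rules out at most one pair, a standard $\binom{r}{2}\le |E(\mathcal B)|$ count leaves only $O(\sqrt n)$ triangles unpaired. This global freedom in choosing $(i,j)$ is exactly what your consecutive-edge scheme lacks.
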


	\begin{proof}[Proof of Proposition~\ref{prop : lower bound g(n,K_3,S_5)}]
		We provide a strategy for Constructor to build at least $\frac{2n}{3}-o(n)$ triangles in two stages. We illustrate the second stage in Figure~\ref{fig : K_3-S_5 : C strat}.
		
		\medskip
		
		{\bf Stage I:} During this stage, Constructor applies the strategy presented in the proof of Proposition~\ref{prop : lower bound g(n,K_3,S_4)} to obtain a chain of $k_0 = \frac{n}{3}-o(n)$ triangles on $3k_0+1$ vertices. Once she is done constructing this chain, we number these triangles in an arbitrary order. Then, for the $i^{th}$ triangle, let $s_i$ be the vertex of degree 2 in $\C$ and let  $l_i,r_i$ be its neighbours. Afterwards, Constructor proceeds with Stage II.
		
		\medskip
		
		{\bf Stage II:} During this stage, Constructor will pair the triangles created in Stage I and connect those in a certain way. While there exist $i\neq j$ such that the sets $\{s_i,l_i,r_i\}$ and $\{s_j,l_j,r_j\}$ are not connected in $\B$, Constructor will play the following three moves and then repeat this process.
		
		She starts by claiming $s_is_j$. Then pair the edges $s_il_j$ and $s_ir_j$, and also pair the edges $s_jl_i$ and $s_jr_i$.
		If Blocker plays a paired edge, then Constructor plays the other one. Otherwise she plays one of these four edges arbitrarily. Constructor repeats this move once (playing adjacent to $s_j$ if she played adjacent to $s_i$ before). After these two moves, Constructor will have created two more triangles in her graph while maintaining degree at most 4.
		
		Once there are no more free edges as required available any more, Constructor stops playing.

		\begin{figure}[h!]
			\centering
			\begin{tikzpicture}[x=0.75pt,y=0.75pt,yscale=-1,xscale=1]

\draw    (274.68,179.45) -- (310,180) ;
\draw  [color={rgb, 255:red, 0; green, 0; blue, 0 }  ,draw opacity=1 ] (353.3,179.89) -- (310,180) -- (331.56,142.45) -- cycle ;
\draw [draw opacity=0]   (388.62,180.45) -- (310,180) ;
\draw    (353.3,179.89) -- (388.62,180.45) ;
\draw    (331.74,107) -- (331.56,142.45) ;
\draw  [color={rgb, 255:red, 0; green, 0; blue, 0 }  ,draw opacity=1 ] (388.62,70) -- (427.67,51.28) -- (424.35,94.46) -- cycle ;
\draw    (388.62,70) -- (353.3,69.45) ;
\draw  [color={rgb, 255:red, 0; green, 0; blue, 0 }  ,draw opacity=1 ] (310,69.55) -- (353.3,69.45) -- (331.74,107) -- cycle ;
\draw [draw opacity=0]   (274.68,69) -- (353.3,69.45) ;
\draw    (310,69.55) -- (274.68,69) ;
\draw    (427.67,51.28) -- (424.35,94.46) ;
\draw    (310,69.55) -- (331.74,107) ;
\draw [color={rgb, 255:red, 74; green, 144; blue, 226 }  ,draw opacity=1 ][line width=1.5]    (331.56,142.45) .. controls (351.69,128.38) and (366.69,106.38) .. (353.3,69.45) ;
\draw [color={rgb, 255:red, 245; green, 166; blue, 35 }  ,draw opacity=1 ][line width=1.5]    (331.74,107) .. controls (351.88,121.07) and (366.88,143.07) .. (353.48,180) ;
\draw [color={rgb, 255:red, 74; green, 144; blue, 226 }  ,draw opacity=1 ][line width=1.5]    (331.74,142.55) .. controls (311.61,128.49) and (296.61,106.49) .. (310,69.55) ;
\draw  [color={rgb, 255:red, 0; green, 0; blue, 0 }  ,draw opacity=1 ] (274.68,69) -- (235.63,50.28) -- (238.95,93.46) -- cycle ;
\draw    (238.95,93.46) -- (235.63,50.28) ;
\draw  [color={rgb, 255:red, 0; green, 0; blue, 0 }  ,draw opacity=1 ] (388.62,180.45) -- (427.67,199.16) -- (424.35,155.99) -- cycle ;
\draw    (424.35,155.99) -- (427.67,199.16) ;
\draw  [color={rgb, 255:red, 0; green, 0; blue, 0 }  ,draw opacity=1 ] (274.68,179.45) -- (235.63,198.16) -- (238.95,154.99) -- cycle ;
\draw    (235.63,198.16) -- (238.95,154.99) ;
\draw    (427.67,199.16) -- (454.96,227.97) ;
\draw    (235.63,198.16) -- (208.34,226.97) ;
\draw    (427.67,51.28) -- (457.67,23.28) ;
\draw    (235.63,50.28) -- (205.63,22.28) ;
\draw [draw opacity=0]   (457.67,23.28) -- (454.96,227.97) ;
\draw [draw opacity=0]   (205.63,22.28) -- (208.34,226.97) ;
\draw [color={rgb, 255:red, 245; green, 166; blue, 35 }  ,draw opacity=1 ][line width=1.5]    (331.74,107) .. controls (311.61,121.07) and (296.61,143.07) .. (310,180) ;
\draw [draw opacity=0]   (454.96,227.97) -- (208.34,226.97) ;

\draw (339,93) node [anchor=north west][inner sep=0.75pt]   [align=left] {$\displaystyle s_{j}$};
\draw (338,133) node [anchor=north west][inner sep=0.75pt]   [align=left] {$\displaystyle s_{i}$};
\draw (298.34,182.72) node [anchor=north west][inner sep=0.75pt]   [align=left] {$\displaystyle l_{i}$};
\draw (351.31,183.22) node [anchor=north west][inner sep=0.75pt]   [align=left] {$\displaystyle r_{i}$};
\draw (355.34,46.72) node [anchor=north west][inner sep=0.75pt]   [align=left] {$\displaystyle l_{j}$};
\draw (309.34,45.72) node [anchor=north west][inner sep=0.75pt]   [align=left] {$\displaystyle r_{j}$};

\draw [fill={rgb, 255:red, 0; green, 0; blue, 0 }  ,fill opacity=1 ]  (274.68, 179.45) circle [x radius= 4, y radius= 4]   ;
\draw [fill={rgb, 255:red, 0; green, 0; blue, 0 }  ,fill opacity=1 ]  (274.68, 179.45) circle [x radius= 4, y radius= 4]   ;
\draw [fill={rgb, 255:red, 0; green, 0; blue, 0 }  ,fill opacity=1 ]  (310, 180) circle [x radius= 4, y radius= 4]   ;
\draw [fill={rgb, 255:red, 0; green, 0; blue, 0 }  ,fill opacity=1 ]  (353.3, 179.89) circle [x radius= 4, y radius= 4]   ;
\draw [fill={rgb, 255:red, 0; green, 0; blue, 0 }  ,fill opacity=1 ]  (353.3, 179.89) circle [x radius= 4, y radius= 4]   ;
\draw [fill={rgb, 255:red, 0; green, 0; blue, 0 }  ,fill opacity=1 ]  (331.56, 142.45) circle [x radius= 4, y radius= 4]   ;
\draw [fill={rgb, 255:red, 0; green, 0; blue, 0 }  ,fill opacity=1 ]  (331.56, 142.45) circle [x radius= 4, y radius= 4]   ;
\draw [fill={rgb, 255:red, 0; green, 0; blue, 0 }  ,fill opacity=1 ]  (310, 180) circle [x radius= 4, y radius= 4]   ;
\draw [fill={rgb, 255:red, 0; green, 0; blue, 0 }  ,fill opacity=1 ]  (388.62, 180.45) circle [x radius= 4, y radius= 4]   ;
\draw [fill={rgb, 255:red, 0; green, 0; blue, 0 }  ,fill opacity=1 ]  (310, 180) circle [x radius= 4, y radius= 4]   ;
\draw [fill={rgb, 255:red, 0; green, 0; blue, 0 }  ,fill opacity=1 ]  (388.62, 70) circle [x radius= 4, y radius= 4]   ;
\draw [fill={rgb, 255:red, 0; green, 0; blue, 0 }  ,fill opacity=1 ]  (388.62, 70) circle [x radius= 4, y radius= 4]   ;
\draw [fill={rgb, 255:red, 0; green, 0; blue, 0 }  ,fill opacity=1 ]  (427.67, 51.28) circle [x radius= 4, y radius= 4]   ;
\draw [fill={rgb, 255:red, 0; green, 0; blue, 0 }  ,fill opacity=1 ]  (427.67, 51.28) circle [x radius= 4, y radius= 4]   ;
\draw [fill={rgb, 255:red, 0; green, 0; blue, 0 }  ,fill opacity=1 ]  (424.35, 94.46) circle [x radius= 4, y radius= 4]   ;
\draw [fill={rgb, 255:red, 0; green, 0; blue, 0 }  ,fill opacity=1 ]  (427.67, 51.28) circle [x radius= 4, y radius= 4]   ;
\draw [fill={rgb, 255:red, 0; green, 0; blue, 0 }  ,fill opacity=1 ]  (427.67, 51.28) circle [x radius= 4, y radius= 4]   ;
\draw [fill={rgb, 255:red, 0; green, 0; blue, 0 }  ,fill opacity=1 ]  (353.3, 69.45) circle [x radius= 4, y radius= 4]   ;
\draw [fill={rgb, 255:red, 0; green, 0; blue, 0 }  ,fill opacity=1 ]  (353.3, 69.45) circle [x radius= 4, y radius= 4]   ;
\draw [fill={rgb, 255:red, 0; green, 0; blue, 0 }  ,fill opacity=1 ]  (331.74, 107) circle [x radius= 4, y radius= 4]   ;
\draw [fill={rgb, 255:red, 0; green, 0; blue, 0 }  ,fill opacity=1 ]  (331.74, 107) circle [x radius= 4, y radius= 4]   ;
\draw [fill={rgb, 255:red, 0; green, 0; blue, 0 }  ,fill opacity=1 ]  (353.3, 69.45) circle [x radius= 4, y radius= 4]   ;
\draw [fill={rgb, 255:red, 0; green, 0; blue, 0 }  ,fill opacity=1 ]  (274.68, 69) circle [x radius= 4, y radius= 4]   ;
\draw [fill={rgb, 255:red, 0; green, 0; blue, 0 }  ,fill opacity=1 ]  (238.95, 93.46) circle [x radius= 4, y radius= 4]   ;
\draw [fill={rgb, 255:red, 0; green, 0; blue, 0 }  ,fill opacity=1 ]  (235.63, 50.28) circle [x radius= 4, y radius= 4]   ;
\draw [fill={rgb, 255:red, 0; green, 0; blue, 0 }  ,fill opacity=1 ]  (235.63, 50.28) circle [x radius= 4, y radius= 4]   ;
\draw [fill={rgb, 255:red, 0; green, 0; blue, 0 }  ,fill opacity=1 ]  (388.62, 180.45) circle [x radius= 4, y radius= 4]   ;
\draw [fill={rgb, 255:red, 0; green, 0; blue, 0 }  ,fill opacity=1 ]  (427.67, 199.16) circle [x radius= 4, y radius= 4]   ;
\draw [fill={rgb, 255:red, 0; green, 0; blue, 0 }  ,fill opacity=1 ]  (424.35, 155.99) circle [x radius= 4, y radius= 4]   ;
\draw [fill={rgb, 255:red, 0; green, 0; blue, 0 }  ,fill opacity=1 ]  (424.35, 155.99) circle [x radius= 4, y radius= 4]   ;
\draw [fill={rgb, 255:red, 0; green, 0; blue, 0 }  ,fill opacity=1 ]  (238.92, 155.31) circle [x radius= 4, y radius= 4]   ;
\draw [fill={rgb, 255:red, 0; green, 0; blue, 0 }  ,fill opacity=1 ]  (238.95, 154.99) circle [x radius= 4, y radius= 4]   ;
\draw [fill={rgb, 255:red, 0; green, 0; blue, 0 }  ,fill opacity=1 ]  (235.63, 198.16) circle [x radius= 4, y radius= 4]   ;
\draw [fill={rgb, 255:red, 0; green, 0; blue, 0 }  ,fill opacity=1 ]  (235.63, 198.16) circle [x radius= 4, y radius= 4]   ;
\draw [fill={rgb, 255:red, 0; green, 0; blue, 0 }  ,fill opacity=1 ]  (454.96, 227.97) circle [x radius= 4, y radius= 4]   ;
\draw [fill={rgb, 255:red, 0; green, 0; blue, 0 }  ,fill opacity=1 ]  (457.67, 23.28) circle [x radius= 4, y radius= 4]   ;
\draw [fill={rgb, 255:red, 0; green, 0; blue, 0 }  ,fill opacity=1 ]  (454.96, 227.97) circle [x radius= 4, y radius= 4]   ;
\draw [fill={rgb, 255:red, 0; green, 0; blue, 0 }  ,fill opacity=1 ]  (205.63, 22.28) circle [x radius= 4, y radius= 4]   ;
\draw [fill={rgb, 255:red, 0; green, 0; blue, 0 }  ,fill opacity=1 ]  (310, 69.55) circle [x radius= 4, y radius= 4]   ;
\draw [fill={rgb, 255:red, 0; green, 0; blue, 0 }  ,fill opacity=1 ]  (208.34, 226.97) circle [x radius= 4, y radius= 4]   ;
\end{tikzpicture}
			\caption{The second phase of Constructor's strategy in the game $K_3-S_5$. The blue edges are paired together, and so are the orange ones.}
			\label{fig : K_3-S_5 : C strat}
		\end{figure}
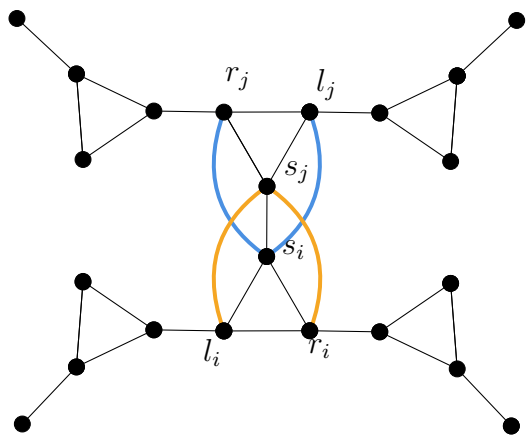

		\medskip
		
		{\bf Strategy discussion:}
		
		The proof of Proposition \ref{prop : lower bound g(n,K_3,S_4)} ensures that Constructor can play according to this strategy for Stage I, and there is nothing to check for Stage II. We now show that this strategy gives the right number of triangles.
		
		\begin{claim}
			During Stage II, all but at most $o(n)$ triangles are paired.
		\end{claim}
		
		\begin{claimproof}
			Denote by $r$ the number of triangles that are in the chain $C_{k_0}$ but that are not paired to any other triangle during Stage II. At the end of the game, all of these $r$ triangles are pairwise connected in $\B$.
			During the whole game, Constructor has played less than $2n$ edges, which means, that Blocker also played less than $2n$ edges. Therefore we get $\frac{r(r-1)}{2} < 2n$, and solving this in $r$ yields $r \leq 2\sqrt{n}$ (for $n$ large enough).
		\end{claimproof}
		
		\medskip
		
		All the triangles that were paired added one more triangle in $\C$ (actually each pair of two triangles added two more triangles). Since the number of triangles at the end of Stage I was $\frac{n}{3}-o(n)$, the number of triangles created by Constructor during Stage II is $\left(\frac{n}{3}-o(n)\right)-o(n)$. Finally, the number of triangles in $\C$ at the end of the game is $\frac{2n}{3}-o(n)$.	
	\end{proof}
	
	\medskip
	
	Now let us turn our attention to the upper bound and a strategy for Blocker. To obtain the upper bound of Theorem~\ref{thm : g(n,K_3,S_5)}, we show that Blocker can prevent any $K_4$ from appearing in Constructor's graph. By looking at all possibilities for a vertex to be in many (i.e.~more than three) triangles, we show that the maximum number of triangles is bounded by $n$. We also state this result as a new proposition:
	
	
	\begin{prop}\label{prop : upper bound g(n,K_3,S_5)}
		Blocker has a strategy to ensure that the following holds:
		$$
		g(n,K_3,S_5) \leq n(1+o(1)) \, .
		$$
	\end{prop}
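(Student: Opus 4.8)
The plan is to hand Blocker a strategy, in the spirit of (but heavier than) the one in Proposition~\ref{prop : upper bound g(n,K_3,S_4)}, that prevents both a $K_4$ and a wheel $W_4$ (a $4$-cycle together with a vertex adjacent to all four of its vertices) from ever occurring in $\C$, and then to read off the bound by a counting argument. Here is why those two obstructions suffice. Since $\C$ is $S_5$-free we have $\Delta(\C)\le 4$. If a vertex $v$ has degree at most $2$ it lies in at most one triangle. If $d_{\C}(v)=3$, with neighbours $a,b,c$, then the three edges $ab,ac,bc$ cannot all lie in $\C$ (that would be a $K_4$), so at most two do, and $v$ lies in at most two triangles. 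If $d_{\C}(v)=4$, with $N(v)=\{a,b,c,d\}$, then absence of $K_4$ means no three of $a,b,c,d$ are pairwise adjacent, so the edges of $\C$ among $\{a,b,c,d\}$ form a triangle-free graph, hence number at most four (Mantel on four vertices), with equality only for a $C_4$; but a $C_4$ on $\{a,b,c,d\}$ together with $v$ is a $W_4$, which is excluded, so these vertices span at most three edges and $v$ lies in at most three triangles. Thus every vertex of $\C$ is in at most three triangles. Summing the number of triangles through a vertex over all $n$ vertices counts each triangle three times, so $3t\le 3n$, i.e. $t\le n$, where $t$ is the number of triangles at the end of the game; this is comfortably within the claimed $(1+o(1))n$.

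For the strategy itself, Blocker plays according to Lemma~\ref{lemma : play in the same cc}, always answering in the connected component of $\C$ in which Constructor just moved, and within that he follows a priority order on the relevant free edges to choose which one to block, exactly as in the $S_4$ proof (a free edge being one joining two vertices of the same component of $\C$ whose claiming by Constructor respects the degree bound). The priority should rank highest any free edge whose addition to $\C$ would immediately complete a $K_4$ or create a fourth triangle through some vertex — these play the role of the priority $(1)$ edges / $C_4$-diagonals of the $S_4$ proof — then edges closing a triangle (prioritising those adjacent to Constructor's last move, so that Blocker's blocked edges cluster), then the rest, with ties broken by the $\C$-degrees of the endpoints. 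A first auxiliary claim, argued as in the $S_4$ case, should state that at any moment at most one such top-priority ``critical'' edge is present (two could appear simultaneously only from Constructor completing an isolated $C_4$ with no diagonal blocked, which is impossible once Blocker has answered in that component), so that Blocker can always take it when it appears.

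The heart of the proof is then the case analysis showing that, against this strategy, neither a $K_4$ nor a $W_4$ is ever completed. One assumes such a configuration is completed by some move of Constructor, looks back at the order in which its edges were claimed, and in every branch either exhibits an edge of the configuration that Blocker must have claimed earlier — because when the corresponding threat ($K_4-e$, or $W_4-e$) first arose the missing edge $e$ had top priority and Blocker, answering in that component, took it — or shows that Blocker claims the missing edge on his present move. For $K_4$ one tracks which of the six edges was played last by Constructor and splits on the $\C$-degrees of the four involved vertices, much as in the diamond analysis; for $W_4$ the analysis is of the same flavour but bulkier, since the hub has degree $4$, the rim is a $C_4$, there are more orderings of the eight edges to consider, and one also invokes that Constructor never closes an isolated triangle. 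Crucially one uses the elementary fact that there are only three distinct $4$-cycles on four labelled vertices, and any two blocked edges sharing a vertex already destroy all of them, so Blocker does not have much to do per component.

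I expect this $W_4$/$K_4$ case analysis to be the main obstacle: keeping track, in each branch, of which of up to eight edges was played last, of the $\C$-degrees of all five vertices at the relevant earlier moments, and of what Blocker's priority rule forced him to block, is exactly the kind of lengthy bookkeeping that made the diamond-exclusion argument the technical core of the $S_4$ case, and here there is simply more to verify. Once both configurations are excluded, the counting paragraph above closes the proof, and the $(1+o(1))$ slack absorbs any negligible deviations Blocker is forced into when he cannot answer in the correct component.
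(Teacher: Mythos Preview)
Your plan differs from the paper's in a key way: you aim to have Blocker prevent both $K_4$ \emph{and} $W_4$, after which the trivial bound ``every vertex lies in at most three triangles'' yields $t\le n$ immediately. The paper instead only prevents $K_4$ (its Lemma~\ref{lemma : g(n,K_4,S_5)}), explicitly allows $W_4$ to appear in $\C$, and then recovers $t\le n$ by a structural counting argument: it classifies the possible local configurations around a vertex lying in four triangles into types A--E (Figure~\ref{fig : W_4}), rules out type~E via Lemma~\ref{lemma : play in the same cc}, partitions $V$ into sets $V_{\mathrm A},V_{\mathrm D},V_\alpha,V_\beta,V_{\mathrm U}$ according to these types, and bounds $\frac13\sum_v t(v)$ set by set. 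So the paper trades a harder counting argument for an easier Blocker task; you propose the opposite trade. Your counting paragraph is correct and considerably cleaner than the paper's.

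The gap is entirely on the strategy side. You assert that at most one top-priority critical edge is present at any time, borrowing the $S_4$ reasoning, but that argument was specific to $C_4$-diagonals in a max-degree-$3$ graph. Here a critical edge may be the last edge of a $W_4$ (eight edges on five vertices, hub saturated, each rim vertex possibly with an outside neighbour), and you give no argument that two such threats --- or a $W_4$-threat together with a $K_4$-threat --- cannot be created by a single Constructor move. Even the paper's $K_4$-only exclusion in the $S_5$ setting already requires handling two simultaneous dangerous diamonds via a substantial case split (Figures~\ref{fig : dangerous diamonds 1}--\ref{fig : dangerous diamonds 2}), which your sketch underestimates; stacking $W_4$-exclusion on top is a further, larger task you have not attempted. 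Your observation that two blocked rim edges sharing a vertex kill all three $4$-cycles on four labelled vertices is correct and would be useful in such an analysis, but it does not by itself deliver the uniqueness-of-threat claim you need. In short: the route is plausible and, if it goes through, would even give the stronger statement $g(n,W_4,S_5)=0$; but the heart of the proof is the $W_4$ case analysis, and that is precisely what is missing.
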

	
	
	\begin{proof}[Proof of Proposition~\ref{prop : upper bound g(n,K_3,S_5)}.]
		We now investigate the upper bound. We first provide a strategy for Blocker ensuring that Constructor does not build a $K_4$, and we will deduce that via this strategy Blocker prevents Constructor from building more than $n$ triangles. Let us state and prove the following lemma:
		
		\begin{lem}\label{lemma : g(n,K_4,S_5)}
			For all $n\geq 0$, Blocker has a strategy such that the following holds:
			$$
			g(n,K_4,S_5)=0 \, .
			$$
		\end{lem}
		
		\begin{proof}
			The strategy we provide for Blocker is very similar to the one presented in the proof of Proposition~\ref{prop : upper bound g(n,K_3,S_4)}, except that there are more cases to handle. Whenever possible, Blocker will play according to Lemma~\ref{lemma : play in the same cc} and adjacent to Constructor's last move (with some additional priority rules), otherwise Blocker will play arbitrarily.
			
			Similarly as in the proof of Proposition \ref{prop : upper bound g(n,K_3,S_4)}, an edge is said to be \emph{free} if it has not been played by any of the two players and if its endvertices are in the same connected component of $\C$ and have degree 1, 2, or 3. Let $e = uv$ be a free edge. Its priority number is:
			\begin{itemize}
				\item[(0)~] if $\C \cup \{e\}$ has three more triangles than $\C$, and additionally there is no edge between the remaining vertices of these triangles.
				\item[(1)~] if (0) is not satisfied but if $\C \cup \{e\}$ has three more triangles than $\C$.
				\item[(2)~] if (0) and (1) are not satisfied and if $\C \cup \{e\}$ has two more triangles than $\C$.
				\item[(3)~] if (0), (1), and (2) are not satisfied and $\C \cup \{e\}$ has one more edge that is in three triangles than $\C$.
				\item[(4)~] if (0), (1), (2), and (3) are not satisfied and we have $d_{\C}(u) \leq 2$ as well as $d_{\C}(u) \leq 2$.
				\item[(5)~] otherwise.
			\end{itemize}
			
			Assume Constructor plays the edge $e$. If there is a free edge that might complete a triangle containing $e$, then Blocker plays it. If there are several such free edges, then he plays the one with the smallest priority number (breaking ties arbitrarily). Otherwise Blocker plays arbitrarily.
			
			\medskip
			
			The only way for Constructor to create a $K_4$ is to create a \emph{dangerous diamond} that will not be blocked by Blocker. We call a dangerous diamond a $C_4$ in $\C$ with one diagonal in $\C$ and the other diagonal being free. Since edges of priority (0) or (1) are diagonals of dangerous diamonds, Blocker claims such edges as soon as they appears. Assume that at some point during the game all dangerous diamonds have been blocked (thus so far there is no $K_4$). Let us show that two dangerous diamonds cannot occur in one Constructor's move. This will prove the lemma, because Blocker will be able to block all dangerous diamonds immediately when they appear and thus prevent any $K_4$.
			
			\medskip
			
			There are two possibilities for two dangerous diamonds to appear at the same time: either these diamonds share exactly one common side edge, or they share a common diagonal. We present these two possibilities in Figure \ref{fig : dangerous diamonds 1}, the black edges have to be claimed by Constructor, the yellow ones need to be free for the two diamonds to be dangerous. We also write $\emptyset$ next to vertices that have degree less than $4$ in $\C$ but cannot have degree $4$ in order for the diamonds to be dangerous.
			
			\begin{figure}[h!]
				\centering
				\input{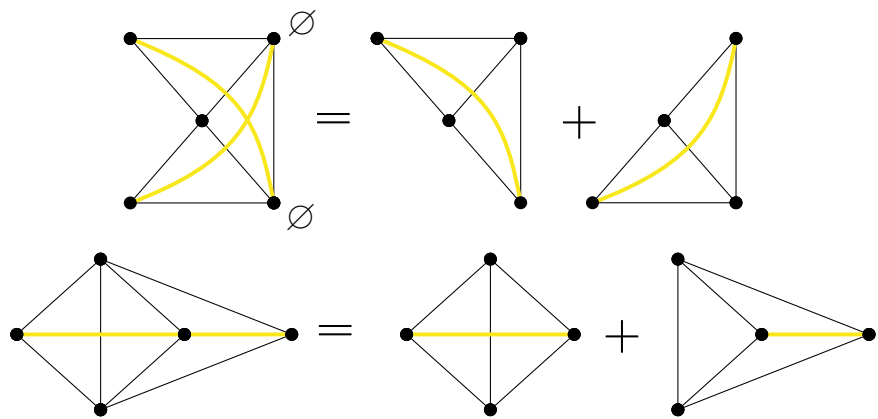}
				\caption{The two possibilities for having two dangerous diamonds simultaneously. In black, Constructor's edges. In yellow, edges that need to be free.}
				\label{fig : dangerous diamonds 1}
			\end{figure}
			
			We examine these two cases and show that Constructor cannot have all the black edges while the yellow ones remain free. To to this, we will further distinguish between black edges whether they have been played by Constructor during her last turn (red) or at a previous point in time (blue). An illustration can be found in Figure~\ref{fig : dangerous diamonds 2}.
			
			\begin{figure}[h!]
				\centering
				\input{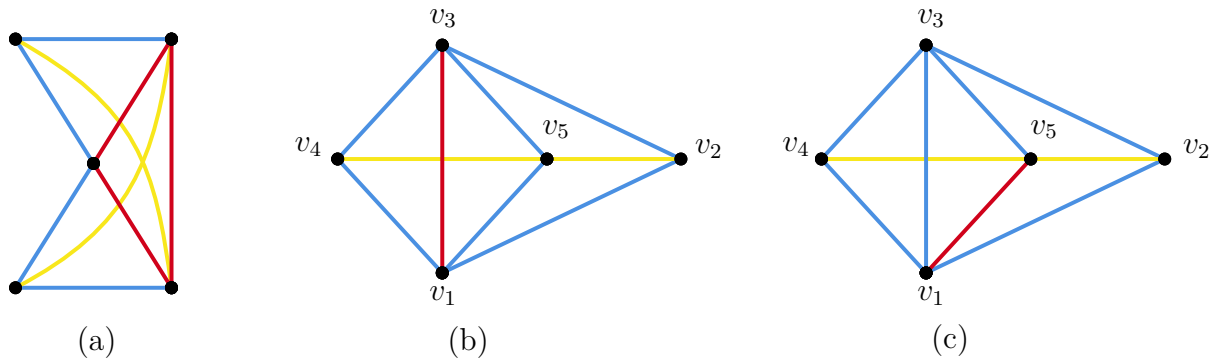}
				\caption{Support for the proof of Lemma \ref{lemma : g(n,K_4,S_5)}. In blue and red, Constructor's edges, the last one being red. In yellow, edges that need to be free when Constructor plays her last edge.}
				\label{fig : dangerous diamonds 2}
			\end{figure}
			
			Now we present an overview of the cases that can happen during the course of the game and how Blocker's strategy handles all these cases.
			
			\begin{itemize}
				\item {\bf Case 1:} There is exactly one common side edge.
				
				This configuration is presented in Figure~\ref{fig : dangerous diamonds 2}.a. The last edge claimed by Constructor needs to be a red one, so that both dangerous diamonds appear at the same time, and the two yellow diagonals need to be free. When the second edge among the red ones was added, Blocker necessarily claimed a yellow, blue, or red edge since there was no other adjacent free edge.
				
				\item {\bf Case 2:} There is a common diagonal.
				
				The two dangerous diamonds are labelled $v_1v_4v_3v_5$ and $v_1v_2v_3v_5$, the diagonals that need to be free being $v_4v_5$ and $v_2v_5$ (in yellow in Figures~\ref{fig : dangerous diamonds 2}b and~\ref{fig : dangerous diamonds 2}c).
				\begin{itemize}
					\item If Constructor manages to play every edge but the common diagonal $v_1v_3$ (Figure~\ref{fig : dangerous diamonds 2}.b), then this edge has priority (0) or (1). If it has priority (0), then it is the only one (note that if two edges of priority (0) would appear at the same time, it would mean that Blocker would not have always played adjacent to Constructor's moves when he could have), hence Blocker plays it and Constructor cannot claim this red edge. 
					
					If this edge has priority (1), then either $v_2v_4,v_2v_5$, or $v_4v_5$ was already blocked. There can be at most one other edge with priority (0) or (1) in this connected component. Since $v_2,v_4,v_5$ can be connected to at most one other vertex outside of $v_1,v_2,v_3,v_4,v_5$, the edge with priority (0) or (1) can only be one of the following:
					\begin{itemize}
						\item $v_4v_5$: This corresponds to $v_4$ and $v_5$ sharing a third common neighbour. But if Blocker claims this one, then not all of the yellow edges will be free.
						\item $v_2v_4$: In this case we said that an edge among $v_2v_5$ and $v_4v_5$ (a yellow edge) was already blocked.
					\end{itemize}
					In each case, Constructor cannot obtain a graph as in Figure~\ref{fig : dangerous diamonds 2}.b, either because she will not be able to claim the red edge, or because some yellow edge will have already been claimed by Blocker.
					\item If Constructor manages to play every edge but say $v_1v_5$ (Figure~\ref{fig : dangerous diamonds 2}.c), then $v_1$ and $v_3$ cannot have any other adjacent edges in $\C$, while $v_2,v_4,v_5$ can each have at most one. Note that $v_1v_3$ has to be the first edge claimed by Constructor among all of the black edges in Figure~\ref{fig : dangerous diamonds 1}, otherwise Blocker would have blocked one of the others. Let $e$ be the last edge played by Constructor among $v_1v_2,v_1v_4,v_2v_3,v_3v_4,v_3v_5$. Now, we distinguish between the following cases for her choice of $e$:
					\begin{itemize}
						\item $v_1v_4$: When Constructor played the last edge among $v_3v_4$ and $v_3v_5$, then clearly no edge of priority at most (3) can have been created, thus Blocker blocked either one of the edges $v_1v_4,v_1v_5,v_2v_5$, or $v_4v_5$ (which are yellow, red, or blue edges) or Blocker blocked $v_2v_4$. In the latter case, when Constructor played $e$, Blocker blocked $v_1v_5$ because there was no free edge of priority (0), (1), or (2), and this was the only one of priority (3) (blocking it would prevent $v_1v_3$ from being in three triangles). So either Constructor cannot claim $v_1v_5$ (red), or a yellow edge is blocked.
						\item $v_3v_4$: Without loss of generality, assume $v_1$ reached degree 3 in $\C$ before $v_3$ did. At that time, Blocker played an edge of priority (4), thus she claimed $v_2v_3$ or $v_3v_4$ (which is not possible because we know for sure that Constructor claimed those later), or $v_2v_4$. In the latter case, when Constructor played $e$, Blocker blocked $v_4v_5$ because there was no free edge of priority (0), (1), (2), or (3), and this was the only one of priority (4).
						\item $v_3v_5$: If this was the last edge Constructor claimed, then just before that the vertices $v_1,v_2,v_3,v_4$ formed a dangerous diamond, and we assumed that at that time Blocker could block all dangerous diamonds, so that is what he did by claiming $v_1v_5$.
						\item We handle all the remaining cases by symmetry.
					\end{itemize}
				\end{itemize}
			\end{itemize}
			
			After examining all cases, we showed that against Blocker's strategy, Constructor cannot create two dangerous diamonds simultaneously. Thus during the whole game every dangerous diamond is neutralized by Blocker and therefore no $K_4$ appears in $\C$.
		\end{proof}
		
		To prove Proposition~\ref{prop : upper bound g(n,K_3,S_5)}, we will apply Blocker's strategy described in Lemma~\ref{lemma : g(n,K_4,S_5)}, and show that when there does not appear a $K_4$ in $\C$ the number of triangles Constructor claims is at most $n$ (which means that every vertex will be on average in at most 3 triangles).
		We argue that there cannot be many vertices which are in more than 3 triangles, more precisely that for every such vertex there are some vertices that are in less than 3 triangles.
		
		Naturally, we first investigate the possibilities for a vertex to be in four triangles, in a graph that has maximal degree at most 4 and no $K_4$.
		The only way for a vertex $v$ to be in four triangles is to be the center of a wheel of size 4, that is, to have a cycle $v_1v_2v_3v_4$ in $\C$ such that $vv_1,\dots,vv_4 \in \C$.
		Since the degree of every $v_i$ in $\C$ is at most 4, it can only be in a triangle if this triangle contains at least another vertex among $v,v_1,v_2,v_3,v_4$. By symmetry, five cases might occur and are illustrated in Figure~\ref{fig : W_4}:
		
		\begin{itemize}
			\item[A:] No vertex $v_i$ is in another triangle.
			\item[B:] There is exactly one additional triangle with some other vertex ($v_1v_2u_1$).
			\item[C:] There are exactly two additional triangles with two other vertices ($v_1v_2u_1$ and $v_3v_4u_2$).
			\item[D:] There are exactly two additional triangles with one other vertex ($v_1v_2u_1$ and $v_1v_4u_1$).
			\item[E:] There are exactly three additional triangles with one other vertex ($v_1v_2u_1$, $v_1v_4u_1$, and $v_2v_3u_1$). We can immediately exclude this structure for the following reason. If this structure were to happen, then, since all vertices are of degree 4, this would be a complete connected component. Constructor would have twelve edges in a connected component of six vertices, which is in contradiction to Lemma~\ref{lemma : play in the same cc}.
		\end{itemize}
		
		\begin{figure}[h!]
			\centering
			\input{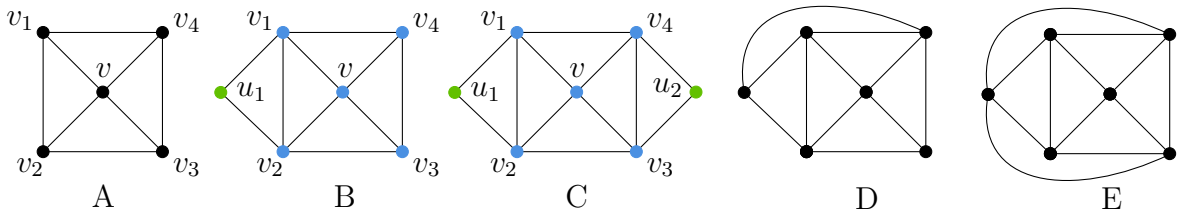}
			\caption{The structures of types A,B,C,D,E that appear in the proof of Proposition \ref{prop : upper bound g(n,K_3,S_5)}}
			\label{fig : W_4}
		\end{figure}
		
		We will denote by $S_\text{A},\dots,S_\text{D}$ the sets of all structures of type A,$\dots$,D in Constructor's graph at the end of the game. Note that the vertices that are in a structure of type A or D cannot be in another structure. When we consider structures of type B or C, the vertices $v,v_1,\dots,v_4$ are in a unique such structure, while the vertices $u_1$ and $u_2$ are in at most two structures of such types. Therefore, we can partition the set of all vertices into the following disjoint sets:
		\begin{itemize}
			\item $V_\text{A}$: the set of vertices that are in a structure of type A.
			\item $V_\text{D}$: the set of vertices that are in a structure of type D.
			\item $V_\alpha$: the set of vertices that are in a structure of type B or C, and labeled as $v,v_1,v_2,v_3$, or $v_4$ (colored in blue in Figure \ref{fig : W_4}).
			\item $V_\beta$: the set of vertices that are in at least one structure of type B or C, labeled as $u_1$ or $u_2$ (colored in green in Figure \ref{fig : W_4}).
			\item $V_\text{U}$: the vertices that are in no structure of type A,B,C, or D.
		\end{itemize}
		
		For any vertex $v$, denote by $t(v)$ the number of triangles containing $v$. The total number of triangles in Constructor's graph is 
		$$
		\frac{1}{3}\sum_{v\in V} t(v) = \frac{1}{3}\left(\sum_{v\in V_\text{A}}t(v) + \sum_{v\in V_\text{D}}t(v) + \sum_{v\in V_\alpha}t(v) + \sum_{v\in V_\beta}t(v) + \sum_{v\in V_\text{U}}t(v)\right) \, .
		$$
		Let us upper bound these sums.
		
		\begin{itemize}
			\item By assumption, all the vertices that are in four triangles are in a structure of type A,B,C, or D, thus $\sum_{v\in V_\text{U}}t(v) \leq 3|V_\text{U}|$.
			\item The vertices that are in a structure of type A or D are in a unique such structure, thus we can partition the sets $V_\text{A}$ and $V_\text{D}$ according to the structures the vertices belong to. Moreover, these vertices cannot be in a triangle with vertices that are not in this structure (simply by assumption on $v,v_1,\dots,v_4$ and because $u_1$ is already of degree 3).
			Therefore,
			$$
			\sum_{v\in V_\text{A}}t(v) = \sum_{s \in S_\text{A}}\sum_{v\in s}t(v) = \sum_{s\in S_\text{A}} 12 = \frac{12}{5}|V_\text{A}|
			$$
			and
			$$
			\sum_{v\in V_\text{D}}t(v) = \sum_{s \in S_\text{D}}\sum_{v\in s}t(v) = \sum_{s\in S_\text{D}} 18 = \frac{18}{6}|V_\text{D}| \, .
			$$
			\item The vertices in $V_\beta$ that are in a structure $s$ can be in at most one triangle outside of $s$, either in another structure of type B or C, or with some other vertices of the graph.		
			Therefore, 
			$$
			\sum_{v\in V_\alpha\cup V_\beta}t(v) \leq \sum_{s \in S_\text{B} \cup S_\text{C}}\sum_{v \in s\cap V_\alpha}t(v) + 2|V_\beta| = 14|S_\text{B}| + 16|S_\text{C}| + 2|V_\beta| \, .
			$$		
			Since the vertices in $V_\alpha$ are in a unique structure, $|V_\alpha| = 5|S_\text{B}|+5|S_\text{C}|$. Moreover, since every structure of type C contains two vertices of $V_\beta$ and since a vertex of $V_\beta$ is in at most two structures, it holds that $|S_\text{C}|\leq |V_\beta|$.	Therefore, 
			$$
			\sum_{v\in V_\alpha\cup V_\beta}t(v) \leq 3|V_\alpha| + |V_\beta| + 2|V_\beta| \, .
			$$
		\end{itemize}
		
		Putting it all together, we get that the following:
		$$
		\frac13 \sum_{v \in V}t(v) \leq \frac{1}{3}\left(\frac{12}{5}|V_\text{A}| + \frac{18}{6}|V_\text{D}| + 3|V_\alpha| + 3|V_\beta| + 3|V_\text{U}|\right) \leq \frac13 \cdot3|V| = n \, .
		$$
		Therefore, the number of triangles in Constructor's graph is at most $n$.	
	\end{proof}

	\medskip
	
	\subsubsection{Forbidding larger stars.}
	
	The strategies for Blocker for $k=4$ and $k=5$ are extremely tricky because we basically explore all of the possible cases. We were not able to find a more general, nicer approach, not even for these small $k$. However, we present an idea for a lower bound when $k$ goes to infinity but does not grow too fast in comparison to $n$.
	
	The idea behind this result is that Constructor can create many stars of size $k(n)$ and then maximizes the number of triangles between the leaves of each star, playing independent copies of the game where we do not forbid anything (see Section~\ref{subsec : res K_3-empty}). We illustrate this in Figure \ref{fig : K_3-S_k : C strat}. Even though Blocker can claim some edges while Constructor creates the stars, the number of triangles Blocker blocks becomes negligible.
	
	This bound does not seem tight to us, because there might be many triangles between different stars that we have not accounted for in the lower bound. Moreover, it does not give us any information about what happens when $k$ is fixed.
	
	\medskip
	
	\begin{figure}[h!]
		\centering
		\input{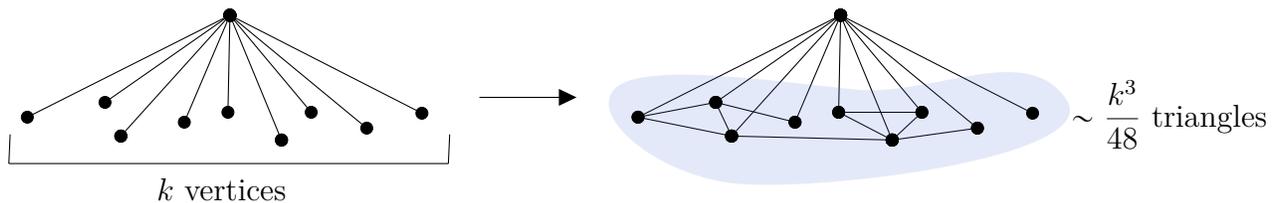}
		\caption{Constructor's strategy for Theorem \ref{thm : g(n,K_3,S_k)} (one of the stars)}
		\label{fig : K_3-S_k : C strat}
	\end{figure}
	
	\begin{proof}[Proof of Theorem \ref{thm : g(n,K_3,S_k)}]
		We will provide a strategy for Constructor. She will start by creating a a star of size $k$. Then, whenever Blocker plays between leaves of this star, so does Constructor, according to Erd\H{o}s-Selfridge's strategy. Whenever it is not the case and while there are at least $3k\sqrt{n}$ isolated vertices in $\C$, Constructor picks a set of $2k+1$ such vertices that are also isolated in $\B$ and creates another star of size $k$.
		
		We could also look at her strategy as playing on many stars by themselves, where every star that is created by Constructor becomes a game independent from what happens in the rest of the graph, since Constructor always answers in a star whenever Blocker claims an edge between its leaves.
		
		Let us state the following claim, which tells us that Constructor can find large sets of independent vertices for a long time.
		\begin{claim}
			While there are at least $3k\sqrt{n}$ isolated vertices in $\C$, Constructor can find a set of $2k+1$ vertices that are isolated in $\C$ and not connected in $\B$. 
		\end{claim}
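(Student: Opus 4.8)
The plan is to bound the number of $\B$-edges lying inside the current set of $\C$-isolated vertices, and then to run a union-bound computation in the spirit of the proof of Proposition~\ref{prop : lower bound g(n,K_3,S_4)}. Write $L$ for the set of vertices that are isolated in $\C$ at the moment considered, put $\ell := |L| \ge 3k\sqrt{n}$, and let $\B_L$ be the subgraph of $\B$ induced by $L$. I claim it suffices to prove $|E(\B_L)| \le n$. Indeed, each edge of $\B_L$ is contained in exactly $\binom{\ell-2}{2k-1}$ subsets of $L$ of size $2k+1$, so the number of $(2k+1)$-subsets of $L$ that contain a $\B$-edge is at most
$$|E(\B_L)|\cdot\binom{\ell-2}{2k-1} = |E(\B_L)|\cdot\frac{2k(2k+1)}{\ell(\ell-1)}\binom{\ell}{2k+1} \le n\cdot\frac{2k(2k+1)}{\ell(\ell-1)}\binom{\ell}{2k+1},$$
and since $\ell(\ell-1)\ge 3k\sqrt{n}\,(3k\sqrt{n}-1) > 2k(2k+1)\,n$ as soon as $5k-2>3/\sqrt{n}$ (true for all $k\ge 1$ once $n$ is large), this is strictly less than $\binom{\ell}{2k+1}$. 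Hence some $(2k+1)$-subset of $L$ induces no edge of $\B$, which is exactly a set of $2k+1$ vertices isolated in $\C$ and pairwise non-adjacent in $\B$, as required.

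It remains to establish $|E(\B_L)|\le n$. Classify Constructor's moves into two types. An \emph{answering move} is a move in which Constructor replies, following Erd\H{o}s-Selfridge's strategy, inside a star she has already fully built, because Blocker just played an edge between two leaves of that star; the number of answering moves is exactly the number of Blocker moves of this kind. Every other Constructor move is a \emph{star-building move}, i.e. an edge incident to the center of a star under construction. Since each completed star permanently removes $k+1$ vertices from the pool of $\C$-isolated vertices and these vertex sets are pairwise disjoint, there are at most $\tfrac{n}{k+1}$ completed stars (plus at most one in construction), each built with $k$ edges, so there are at most $n$ star-building moves in total for $n$ large. As Constructor plays first, Blocker has made at most as many moves as Constructor, that is, at most (number of answering moves) $+$ (number of star-building moves) $\le$ (number of Blocker between-leaves moves) $+\, n$; subtracting, Blocker has made at most $n$ moves that are \emph{not} played between the leaves of a completed star. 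Finally, any edge of $\B_L$ has both endpoints permanently isolated in $\C$ (since $\C$ only grows), so neither endpoint is ever a leaf of a star; such an edge is therefore counted among those $\le n$ Blocker moves, and $|E(\B_L)| \le n$ follows.

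The main obstacle is precisely this bound $|E(\B_L)| = O(n)$. A priori Constructor makes far more than $n$ moves, since she also plays out an $\approx\binom{k}{2}$-move Erd\H{o}s-Selfridge game inside each of the $\approx n/k$ stars, for $\Theta(nk)$ moves overall, so one cannot simply bound Blocker's total number of edges. The point is that all of Constructor's extra moves are \emph{forced} replies to Blocker moves that lie strictly between leaves of completed stars — hence outside $L$ — so only the $\le n$ star-building moves are genuinely "free", and these are what control how many Blocker edges can land inside $L$. Once this is in place, the remainder is the same type of counting already used for the $S_4$ lower bound.
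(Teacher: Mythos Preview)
Your proof is correct and follows essentially the same approach as the paper. The paper's argument is terser but identical in content: it bounds the number of Blocker edges ``not between leaves of a star'' by~$n$ (using precisely your observation that Constructor spends fewer than $n$ moves building stars, the rest being forced replies inside stars), and then runs the same union-bound calculation $n\binom{\ell}{2k-1}<\binom{\ell}{2k+1}$ to find a good $(2k+1)$-set. Your version makes the move-accounting explicit and uses the slightly sharper count $\binom{\ell-2}{2k-1}$, but the mechanism is the same.
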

		\begin{claimproof}
			Similarly as in previous proofs, let $L$ be the set of isolated vertices in $\C$ and $\ell := |L|$. Assume $l \geq 3k\sqrt{n}$.
			Then the number of sets of size $2k+1$ of isolated vertices in $\C$, with at least one Blocker edge, is at most 
			$$
			\sum_{S \in L^{2k+1}}\sum_{e \in E(\B)} 1_{\{|S \cap e|=2 \}} \ \leq \sum_{e \in E(\B) \text{ not inside of a star}}\binom{\ell}{2k-1} \, .
			$$
			Since Constructor will use less than $n$ moves to construct her stars, the number of Blocker edges that are not between leaves of a star is also less than $n$. Thus the previous sum is less than
			$$
			n \binom{\ell}{2k-1} < \binom{\ell}{2k+1}
			$$
			for $n$ large enough.
		\end{claimproof}
		
		\medskip
		
		Inside of every star, Constructor tries to maximize the number of triangles between the leaves of this star. Once Constructor finishes a new star, Blocker might already have claimed (at most) $k$ edges among the leaves, that would appear in $k(k-1)$ triangles. Thanks to Theorem~\ref{thm : ES}, we know that Blocker can play in such a way, that Constructor does not claim more than $\frac{k^3}{48}(1+o(1)) - O(k^2)$ triangles per star. Therefore, the total number of triangles Constructor creates is at least $\frac{n-3k\sqrt{n}}{k+1}\left(\frac{k^3}{48}- O(k^2)\right)$. Since $k(n) = o(\sqrt{n})$, this becomes $\frac{nk^2}{48}(1+o(1))$.
	\end{proof}
	
	\medskip

	We can also wonder what happens when $k$ is greater than $\sqrt{n}$.
	Constructor could secretly pick $k+1$ vertices and play the game where nothing is forbidden only in the smaller induced subgraph $K_{k+1}$. Thanks to Theorem \ref{thm : g(n,K_3,emptyset)}, Constructor can create at least $\frac{k^3}{48}$ triangles.
	In particular, if $k$ is linear in $n$, then this tells us that $g(n,K_3,S_{k+1})$ is of order $n^3$. More precisely, if $k=cn$ with $c \in [0,1]$, then $g(n,K_3,S_{k+1}) \geq \frac{c^3n^3}{48}(1+o(1))$. However, we do not know the order of $g(n,K_3,S_{k+1})$ when $k$ is of order between $\sqrt{n}$ and $n$, especially we wonder if it is of the same order as $ex(n,K_3,S_{k+1})$ -- that is $nk^2$. Another interesting question is when $k = \frac{n}{2}$. We observed via some simulations that if both players follow the Erd\H{o}s-Selfridge's strategy that gave the result when there is no constraint (see Theorem \ref{thm : g(n,K_3,emptyset)} and Theorem \ref{thm : ES}), then Constructor's graph had maximal degree $\frac{n}{2}$. However we do not know how Blocker would play with the knowledge that Constructor cannot add any edge to a vertex of high degree. We wonder if the result is the same as with no degree constraint 
	or if the constant factor is different.

	\section{Results on the planar and embedded Constructor-Blocker game}\label{sec : results planarity}

	In this section, we will focus on our versions of game where we focus on planarity and present our proofs of Theorem~\ref{thm : PCB} and Theorem~\ref{thm : ECB}.
	
	\subsection{The Planar Constructor-Blocker game}\label{subsec : pcb}
	
	We will start with the planar Constructor-Blocker game and the proof of Theorem~\ref{thm : PCB}. 
	Before we proceed to prove our result, let us first state a trivial upper bound to $g_{\text{PCB}}(n,K_3)$.
	
	\begin{prop}[\cite{HS79}]
		For $n\geq 3$, the maximum number of triangles in a planar graph is $3n-8$.
	\end{prop}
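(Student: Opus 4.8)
Since adding an edge to a graph never destroys a triangle, and since for $n\ge 3$ every planar graph on $n$ vertices is a spanning subgraph of a maximal planar graph (a triangulation) on the same vertex set, it suffices to show that a triangulation $T$ on $n\ge 3$ vertices has at most $3n-8$ copies of $K_3$, and that this bound is attained. Fix a plane embedding of $T$. I would use the standard dichotomy: for $n\ge 4$, a triangulation is $3$-connected, every face boundary is a genuine cycle, the $2n-4$ faces are all triangles and yield $2n-4$ \emph{distinct} $3$-cycles, and every $3$-cycle $C$ of $T$ is either one of these facial triangles (when one of the two open regions bounded by $C$ contains no vertex) or else \emph{separating} (both regions contain a vertex of $T$). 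Writing $s$ for the number of separating triangles, the number of copies of $K_3$ in $T$ is $(2n-4)+s$, so the claim is equivalent to $s\le n-4$.

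\textbf{Induction on $n$.} The base cases $n=3$ (the graph $K_3$, with $1=3\cdot 3-8$ triangle) and $n=4$ (the graph $K_4$, with $4=3\cdot 4-8$ triangles, no separating triangle) are immediate. For $n\ge 5$: if $T$ has no separating triangle, then all $3$-cycles are faces, so the count is at most $2n-4\le 3n-8$. Otherwise pick a separating triangle $C=xyz$ and let $A$, $B$ be the sets of vertices strictly inside and strictly outside $C$; both are nonempty and $|A|+|B|=n-3$. Set $T_1=T[A\cup\{x,y,z\}]$ and $T_2=T[B\cup\{x,y,z\}]$. Restricting the embedding of $T$ to each of the two closed disks cut out by $C$ shows that $T_1$ and $T_2$ are themselves triangulations, on $n_1=|A|+3$ and $n_2=|B|+3$ vertices with $4\le n_i\le n-1$ and $n_1+n_2=n+3$. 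No edge of $T$ joins $A$ to $B$ (it would have to cross $C$), so every $3$-cycle of $T$ uses vertices lying entirely in $A\cup\{x,y,z\}$ or entirely in $B\cup\{x,y,z\}$; as $T_1$ and $T_2$ are \emph{induced} subgraphs, their $3$-cycles are precisely these, and the only $3$-cycle counted in both is $C$ itself. Hence
\[
\#\{K_3\text{ in }T\}=\#\{K_3\text{ in }T_1\}+\#\{K_3\text{ in }T_2\}-1\le (3n_1-8)+(3n_2-8)-1=3(n+3)-17=3n-8 ,
\]
where the inequality is the induction hypothesis applied to $T_1$ and $T_2$. This gives the upper bound.

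\textbf{Tightness.} To see $3n-8$ is attained I would use the stacked triangulation: start from $K_4$ and repeatedly insert a new vertex into some triangular face, joining it to the three vertices of that face. Each insertion adds one vertex and exactly three new triangles (the old face survives and three new ones appear), so after $n-4$ insertions one obtains a planar graph on $n$ vertices with $4+3(n-4)=3n-8$ triangles.

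\textbf{Where the care is needed.} The facial/separating dichotomy and the fact that the two ``sides'' $T_1,T_2$ are honest triangulations are routine once $3$-connectivity of triangulations on $\ge 4$ vertices is invoked. The one step I would be most careful about is the inclusion--exclusion across the split: one must verify that \emph{every} $3$-cycle of $T$ is a $3$-cycle of exactly one of $T_1,T_2$ apart from $C$ — and this rests precisely on the absence of $A$--$B$ edges together with $T_1,T_2$ being induced subgraphs. Everything else is bookkeeping, and the constant lands exactly on $3n-8$, which is what makes the induction close.
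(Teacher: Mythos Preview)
Your proof is correct. Note, however, that the paper does not actually prove this proposition: it is quoted as a known result from \cite{HS79}, and the only content the paper adds is the remark that the extremal graphs are the Apollonian networks, built by starting from a triangle and repeatedly inserting a vertex into a face. This is exactly your tightness construction (up to starting at $K_3$ versus $K_4$), so on that point you and the paper agree. Your upper-bound argument via splitting along a separating triangle is the standard one and goes through cleanly; the arithmetic $(3n_1-8)+(3n_2-8)-1=3(n_1+n_2)-17=3n-8$ closes the induction, and your care about the inclusion--exclusion across $C$ is well placed.
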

	
	The extremal graphs matching this number are called \emph{Apollonian networks} (see Figure \ref{fig : Apollonian}) and can be constructed inductively. Start with one triangle, then in each step add a new vertex to some face and connect it to the three vertices of this face.
	
	\begin{figure}[h!]
		\centering
		\begin{tikzpicture}[x=0.75pt,y=0.75pt,yscale=-1,xscale=1]

\draw   (95.73,86.44) -- (13.12,86.6) -- (54.28,16) -- cycle ;
\draw   (203.23,86.44) -- (120.62,86.6) -- (161.78,16) -- cycle ;
\draw    (120.62,86.61) -- (161.88,63.01) ;
\draw    (161.88,63.01) -- (203.23,86.44) ;
\draw    (161.88,63.01) -- (161.78,16) ;
\draw   (312,86.44) -- (229.39,86.6) -- (270.55,16) -- cycle ;
\draw    (229.39,86.61) -- (270.65,63.01) ;
\draw    (270.65,63.01) -- (312,86.44) ;
\draw    (270.65,63.01) -- (270.55,16) ;
\draw    (270.55,16) -- (283.44,55.41) ;
\draw    (283.44,55.41) -- (312,86.44) ;
\draw    (283.44,55.41) -- (270.65,63.01) ;
\draw    (13.12,86.61) -- (54.28,16) ;
\draw    (95.73,86.44) -- (54.28,16) ;

\draw [fill={rgb, 255:red, 0; green, 0; blue, 0 }  ,fill opacity=1 ]  (229.4, 86.6) circle [x radius= 3, y radius= 3]   ;
\draw [fill={rgb, 255:red, 0; green, 0; blue, 0 }  ,fill opacity=1 ]  (312, 86.43) circle [x radius= 3, y radius= 3]   ;
\draw [fill={rgb, 255:red, 0; green, 0; blue, 0 }  ,fill opacity=1 ]  (270.55, 16) circle [x radius= 3, y radius= 3]   ;
\draw [fill={rgb, 255:red, 0; green, 0; blue, 0 }  ,fill opacity=1 ]  (270.55, 16) circle [x radius= 3, y radius= 3]   ;
\draw [fill={rgb, 255:red, 0; green, 0; blue, 0 }  ,fill opacity=1 ]  (312, 86.43) circle [x radius= 3, y radius= 3]   ;
\draw [fill={rgb, 255:red, 0; green, 0; blue, 0 }  ,fill opacity=1 ]  (270.65, 63.01) circle [x radius= 3, y radius= 3]   ;
\draw [fill={rgb, 255:red, 0; green, 0; blue, 0 }  ,fill opacity=1 ]  (270.65, 63.01) circle [x radius= 3, y radius= 3]   ;
\draw [fill={rgb, 255:red, 0; green, 0; blue, 0 }  ,fill opacity=1 ]  (270.65, 63.01) circle [x radius= 3, y radius= 3]   ;
\draw [fill={rgb, 255:red, 0; green, 0; blue, 0 }  ,fill opacity=1 ]  (283.44, 55.41) circle [x radius= 3, y radius= 3]   ;
\draw [fill={rgb, 255:red, 0; green, 0; blue, 0 }  ,fill opacity=1 ]  (283.44, 55.41) circle [x radius= 3, y radius= 3]   ;
\draw [fill={rgb, 255:red, 0; green, 0; blue, 0 }  ,fill opacity=1 ]  (120.63, 86.6) circle [x radius= 3, y radius= 3]   ;
\draw [fill={rgb, 255:red, 0; green, 0; blue, 0 }  ,fill opacity=1 ]  (203.23, 86.43) circle [x radius= 3, y radius= 3]   ;
\draw [fill={rgb, 255:red, 0; green, 0; blue, 0 }  ,fill opacity=1 ]  (161.78, 16) circle [x radius= 3, y radius= 3]   ;
\draw [fill={rgb, 255:red, 0; green, 0; blue, 0 }  ,fill opacity=1 ]  (161.88, 63.01) circle [x radius= 3, y radius= 3]   ;
\draw [fill={rgb, 255:red, 0; green, 0; blue, 0 }  ,fill opacity=1 ]  (161.88, 63.01) circle [x radius= 3, y radius= 3]   ;
\draw [fill={rgb, 255:red, 0; green, 0; blue, 0 }  ,fill opacity=1 ]  (13.13, 86.6) circle [x radius= 3, y radius= 3]   ;
\draw [fill={rgb, 255:red, 0; green, 0; blue, 0 }  ,fill opacity=1 ]  (54.28, 16) circle [x radius= 3, y radius= 3]   ;
\draw [fill={rgb, 255:red, 0; green, 0; blue, 0 }  ,fill opacity=1 ]  (54.28, 16) circle [x radius= 3, y radius= 3]   ;
\draw [fill={rgb, 255:red, 0; green, 0; blue, 0 }  ,fill opacity=1 ]  (95.73, 86.44) circle [x radius= 3, y radius= 3]   ;
\end{tikzpicture}
		\caption{The Apollonian networks on three, four, and five vertices}
		\label{fig : Apollonian}
	\end{figure}
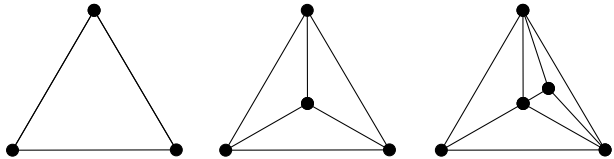
	
	Since Constructor can match this bound from below, we will only consider a strategy for her. 
	Before we go into more details, let us briefly give a sketch of Constructor's strategy. She will try to build a graph which is very close to an Apollonian network. At the beginning of the game, Constructor will take a few turns to build a fan of size 6. Then, as in the inductive construction of the Apollonian networks, Constructor will be able to create three triangles with every new vertex: She will connect one new vertex to the center of the fan, and then (since she does not need to tell Blocker on which face this vertex will end up) she has many possibilities to create new triangles, and Blocker cannot block all of them.
	


	\begin{proof}[Proof of Theorem \ref{thm : PCB}]	
		In the following, we will denote the center of an $F_k$ by $v$ and the $k$ other vertices that form a path by $v_1,\dots,v_k$. We call a vertex red if it is the center of a wheel of size at least 4 or of a fan of size at least 6. We call a vertex orange if this is not true, but it is the center of a $W_3$. Orange vertices might turn red during the game, and the red vertices will remain red forever.
		Before we state Constructor's strategy, we will prove two useful claims.
		
		\begin{claim}\label{clm : building F_6}
			Constructor can build an $F_6$ using 12 moves and 8 vertices.
		\end{claim}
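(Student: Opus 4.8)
The plan is to have Constructor build the fan one blade at a time, using exactly the threat-creation idea already exploited in Lemma~\ref{lemma : pendant leg}: when Constructor cannot complete a triangle in one move, she can create two triangle-threats at once, and Blocker can kill only one. Fix a vertex $v$ that will be the center and a pool of seven further vertices; since the game has just begun, all eight are isolated in both $\C$ and $\B$. Constructor maintains a fan $F_k$ (center $v$, path $p_1\cdots p_k$ among the pool vertices) together with at most one ``spare'' vertex $s$ for which she already owns the center-edge $vs$. To go from $F_k$ to $F_{k+1}$ she picks an unused pool vertex $w$ with $vw$, $p_1w$, $p_kw$ all free and plays $vw$; this simultaneously threatens the triangles $vp_1w$ and $vp_kw$, either of which lengthens the path at an endpoint, so Blocker neutralises at most one and at her next move Constructor completes the other — two Constructor moves per extension. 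If at some point the spare $s$ has a free edge to a current endpoint, Constructor instead plays just that edge and attaches $s$ in one move.

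First I would handle the opening. Constructor plays $vv_1$; as soon as she owns two center-edges $vp_1,vp_2$ with $p_1p_2$ free she closes the triangle and has $F_2$. The only obstruction is Blocker repeatedly claiming the rim-edge that would close the current triangle, but each such claim costs Blocker a move \emph{and} lets Constructor add a third center-edge, after which $vp_1w,vp_2w$ give a genuine double threat that Blocker cannot fully block; closing one of them yields $F_2$ while leaving the untouched center-edge as the spare $s$. So after at most four moves Constructor has $F_2$ (possibly with a spare), and four further extension steps — one of which can be the cheap spare-attachment — produce $F_6$. The resulting bookkeeping gives at most $12$ Constructor moves, and only $v$ together with at most $1+6=7$ pool vertices are ever touched, i.e.\ $8$ vertices in all.

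The part that genuinely needs care — and where a short explicit case analysis on Blocker's replies is unavoidable — is verifying that a usable fresh vertex $w$ (with $vw,p_1w,p_kw$ all unclaimed) is always available and that the spare can ultimately be incorporated. Here one uses that whenever it is Constructor's turn there are at least two candidate fresh vertices (until the final extension), that a single Blocker edge spoils at most one candidate, and that Constructor chooses \emph{which} candidate to play, so she can route around Blocker's last move and, in particular, keep the spare's edge to a future endpoint alive for the cheap attachment. Making this airtight against an adversary who spends his non-forced moves trying to claim center-edges of $v$ (denying pool vertices) or to cut off the spare is the main obstacle; this is exactly where I would put the detailed argument, in the same spirit as the proof of Lemma~\ref{lemma : pendant leg}.
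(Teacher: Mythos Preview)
Your double-threat idea for extending the fan is exactly what the paper does, but your proposal has a genuine gap: you cannot pre-commit to a fixed pool of eight vertices. Suppose Blocker simply steals center-edges whenever one is available. Concretely: C plays $vv_1$, B plays $vv_2$; C plays $vv_3$, B plays $vv_4$; C closes $v_1v_3$ (now $F_2$), B plays $vv_5$; C plays $vv_6$, B plays $vv_7$; C attaches $v_6$ to an endpoint (now $F_3$). At this point every edge $vw$ for $w$ in your pool is claimed, and Constructor owns only three of them. No further fresh $w$ exists, so you are stuck at $F_3$ (or $F_4$ after cashing in the spare). More abstractly, $F_6$ needs six Constructor center-edges, but with seven pool vertices and Blocker moving after each Constructor move, Blocker can always claim at least three of the seven center-edges, leaving Constructor with at most four. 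Your heuristic ``a single Blocker edge spoils at most one candidate'' is correct per move, but the cumulative count does not close: each extension consumes one candidate and Blocker spoils another, so five candidates after $F_2$ run out well before $F_6$.

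The paper avoids this entirely by \emph{not} fixing a pool. It first invokes Lemma~\ref{lemma : pendant leg} to get a triangle with a pendant leg in four moves; the triangle vertices are relabelled $v_1,v_2,v_3$ with $v_1$ as the future center. Then, for each of four extensions, it picks a vertex $u$ \emph{adaptively from all of $K_n$} that is currently isolated (in both $\C$ and $\B$) from $v_1$ and the two current path-endpoints --- such a $u$ exists because only $O(1)$ edges have been played and $n$ is large --- plays $uv_1$, and then plays whichever of the two endpoint-edges Blocker did not block. The ``8 vertices'' in the claim counts the vertices that Constructor's edges actually touch at the end (the seven vertices of $F_6$ plus the pendant vertex from the lemma), not a set reserved in advance. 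If you drop the fixed-pool assumption and choose $u$ adaptively, your argument becomes essentially the paper's.
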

		
		\begin{claimproof}
			Following the strategy of Lemma~\ref{lemma : pendant leg}, Constructor can build a triangle with a pendant leg within four moves (note that this does not break any planarity constraints), and afterwards she arbitrarily denotes these vertices as $v_1,v_2$, and $v_3$. Then (after Blocker's turn) she picks a vertex $u$ that is isolated from this triangle, both in $\C$ and $\B$, and claims the edge $uv_1$. Now, Blocker cannot block both $uv_2$ and $uv_3$, so Constructor can claim one of those edges after Blocker's next turn. Constructor successively adds four vertices this way and creates a fan with center $v_1$ and leaves (after relabelling them) $w_1,\dots,w_6$ such that $w_1w_2,\dots,w_5w_6 \in \C$. She can create such a fan within a total of $4+ 4\cdot2 = 12$ moves.
		\end{claimproof}
		
		\medskip
		
		We will say that a vertex $u$ is \emph{safe} from a $W_4$ (resp. a $W_5$ or an $F_6$) if it is not connected to any of the five (resp. six or seven) vertices of this structure in $\B$. The following claim will be crucial for Constructor in the creation of three new triangles for every vertex she adds to her graph.
		
		\begin{claim}{\textit{\textbf{(Crucial claim)}}}
			If $u$ is safe from a $W_4$, a $W_5$, or an $F_6$, then, in three moves, Constructor can build three new triangles with $v$, and $v$ becomes an orange vertex.
		\end{claim}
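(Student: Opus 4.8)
The plan is to have Constructor attach the new vertex $u$ to the given structure exactly the way one attaches a vertex to a face in the Apollonian construction: join it to the center and to two consecutive rim vertices. Write $v$ for the center of the structure and $v_1,\dots,v_k$, with $k\in\{4,5,6\}$, for its path (if it is an $F_6$) or its cycle (if it is a $W_4$ or $W_5$), so that $vv_i\in\C$ for every $i$ and $v_iv_{i+1}\in\C$ for every consecutive pair (cyclically, in the wheel cases). Constructor's first move is $uv$: this is legal since $u$ is isolated in $\C$ (so $uv\notin\C$) and $u$ is safe from the structure (so $uv\notin\B$), and adding an edge from an isolated vertex to $v$ keeps $\C$ planar. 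Her goal is to finish with $u$ joined to $v$ and to some consecutive pair $v_i,v_{i+1}$: this creates exactly the three new triangles $uvv_i$, $uvv_{i+1}$, $uv_iv_{i+1}$ (all of $vv_i$, $vv_{i+1}$, $v_iv_{i+1}$ are in $\C$), and it makes $\{v,v_i,v_{i+1}\}$ a triangle all of whose vertices are neighbours of $u$, i.e.\ $u$ becomes the center of a $W_3$ and hence an orange vertex. Planarity of the final graph is immediate: $u$ is drawn inside the triangular face of the structure bounded by $v,v_i,v_{i+1}$, which is still empty because in Constructor's strategy each new vertex is placed into a previously unused face.

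So the real content is game-theoretic: Constructor must reach such a configuration despite two intervening Blocker moves. After the move $uv$, Blocker plays one edge $e_1$. Since $u$ was safe, no edge $uv_j$ had been claimed before, so after $e_1$ at most one of $uv_1,\dots,uv_k$ is blocked. I would then count the consecutive triples $(v_{i-1},v_i,v_{i+1})$: there are $4$ of them in the $F_6$ case and in the $W_4$ case, and $5$ in the $W_5$ case, while a single rim vertex lies in at most $3$ of them. (For $F_6$ this is precisely where one needs the path to have length $6$ and not $5$: in a $P_5$-fan the middle vertex lies in all three triples, so a single Blocker edge could destroy every option.) Hence there is still a triple $(v_{i-1},v_i,v_{i+1})$ — with $2\le i\le 5$ in the fan case, and any $i$ in the wheel cases — for which all of $uv_{i-1},uv_i,uv_{i+1}$ are free. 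Constructor plays $uv_i$ as her second move; now $uvv_i$ is already a new triangle, and both $uv_{i-1}$ and $uv_{i+1}$ would finish the job. Blocker's second move $e_2$ blocks at most one of these two, so Constructor plays the other one as her third move, producing the remaining two triangles and turning $u$ orange. (Blocker's edges never enter $\C$, so Blocker cannot interfere with Constructor's planarity at any point.)

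The main obstacle is exactly this middle step: guaranteeing that after Blocker's first reply there is still a consecutive triple of rim vertices all three of whose edges to $u$ are available, and that the triple Constructor then commits to still carries a genuine double threat for the last move. That is where the two hypotheses do their work — "safe" rules out any head start for Blocker, and "$W_4$/$W_5$/$F_6$" guarantees enough rim vertices that one blocked vertex cannot kill every triple — and it is the reason the preliminary claim is stated for an $F_6$ rather than a smaller fan. Everything else (newness of the three triangles, $u$ ending up as a $W_3$-center, and planarity being preserved at each of the three moves) follows directly from the construction described above.
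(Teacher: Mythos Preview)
Your proof is correct and follows essentially the same strategy as the paper: first play $uv$, then commit to the middle vertex of a consecutive triple on the rim that survives Blocker's reply, and finally take whichever of the two neighbouring rim edges Blocker leaves free. The paper phrases the middle step by symmetry (``assume Blocker plays $uv_4$'' or ``$uv_i$ with $i\ge 4$'', then Constructor plays $uv_2$), while you phrase it as a counting argument on consecutive triples; your version has the minor advantage of making explicit why an $F_6$ is needed rather than an $F_5$, but the underlying mechanism is identical.
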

		
		\begin{claimproof}
			The following strategy is illustrated in Figure \ref{fig : PCB}.
			\begin{itemize}
				\item {\bf Case 1:}  If $v$ is the center of a $W_4$ (with leaves $v_1,\dots,v_4$) and $u$ is not connected to $v,v_1,\dots,v_4$ in $\B$, then Constructor plays $uv$. By symmetry, assume that Blocker plays $uv_4$ (or Blocker does not connect any of the $v_i$ to $u$). Constructor then plays $uv_2$. Now, Blocker cannot block both $uv_1$ and $uv_3$, thus Constructor can build a $K_4$ centered at $u$.
				This strategy can easily be adapted to the case of a $W_5$.
				\item {\bf Case 1:}  If $v$ is the center of an $F_6$ (with leaves $v_1,\dots,v_6$) and $u$ is not connected to $v,v_1,\dots,v_6$ in $\B$, then Constructor plays $uv$. By symmetry, assume that Blocker plays $uv_i$ for some $i\geq 4$ (or that Blocker does not connect any of the $v_i$ to $u$). Constructor then plays $uv_2$. Now, Blocker cannot block both $uv_1$ and $uv_3$, thus Constructor can build a $K_4$ centered at $u$. Furthermore $u$ becomes the center of a $W_3$ (with leaves among $v,v_1,\dots,v_6$).
			\end{itemize}
		\end{claimproof}
		
		\medskip
		
		\begin{figure}[h!]
			\centering
			\input{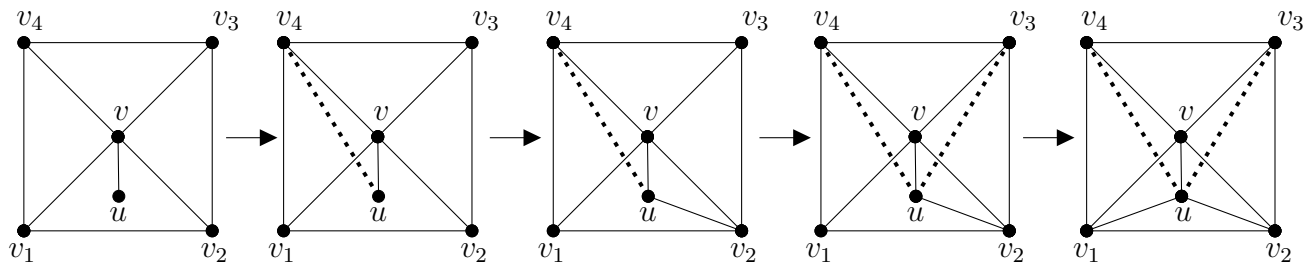}
			\vspace{-13pt}
			\caption{Constructor's strategy to create three triangles with a new vertex $u$ and a $W_4$. Constructor's moves are in plain lines, Blocker's moves are in dotted lines.}
			\label{fig : PCB}
		\end{figure}
		
		With these claims at hand, we can now describe the global strategy for Constructor in three stages.
		
		\medskip
		
		{\bf Stage I:} Constructor creates an $F_6$ using at most twelve moves and eight vertices, then she proceeds to Stage II.
		
		\medskip
		
		{\bf Stage II:} During this stage, Constructor builds up a set $\mathcal{S}$ of structures (for example fans of size 6) that will used later to create further triangles. She repeats the following process $\frac{n}{105}$ times:
		\begin{enumerate}
			\item First she picks a red vertex $v$ that is not the center of a structure in $\mathcal{S}$. Let $S \in \mathcal{S}$ be an associated $W_4,W_5$, or $F_6$.
			
			\emph{Remark.} The first vertex she picks will be the center of the $F_6$ created in Stage I. Afterwards, new red vertices will appear during this stage.
			\item Constructor applies successively the crucial claim 26 times, with vertices that are isolated in $\C$ and safe from $S$. By the pigeonhole principle, there will be at least one face of $S$ that contains at least six of these vertices. These vertices, together with $v$, will form a new $F_6$, which we then add to $\mathcal{S}$. Note that these six vertices were in no structure of $\mathcal{S}$ before. They will be either orange or red, but at least one of them will be red and can take over the role of $v$ during the next iteration of Stage II.
		\end{enumerate}
		Once Constructor is done with Stage II, she proceeds to Stage III.
		
		\medskip
		
		{\bf Stage III:} While there are more than $6\cdot 105 +1$ isolated vertices in $\C$, Constructor picks an isolated vertex $u$ and a structure $S \in \mathcal{S}$ such that $u$ is safe from $S$. She then creates three triangles according to the strategy in the crucial claim. When there are not enough isolated vertices available any more, she stops playing.
		
		\medskip
		
		{\bf Strategy discussion:}
		
		Let us show, that Constructor can actually follow her strategy and during this process creates at least $3n - o(n)$ triangles.
		She can follow Stage I because of Claim~\ref{clm : building F_6}. For Stage II, we will show that there are enough vertices available for Constructor as desired.
		
		The total number of edges that Constructor (and therefore also Blocker) plays during Stage I and Stage II combined is upper bounded by $12 + 3\cdot26\cdot\frac{n}{105} = 12 + \frac{78n}{105}$. Thus, if we pick a fixed $S \in \mathcal{S}$, at most $12 + \frac{78n}{105}$ isolated vertices are not safe from $S$, and we can find enough safe vertices to repeat the process during Stage II.
		
		For Stage III, note that a vertex can be in at most two elements of $\mathcal{S}$. Indeed, it can be used only once as a center of a fan, and only once as a leaf of a fan (because the leaves of fans are vertices that were not in an element of $\mathcal{S}$ before). Thus one Blocker edge can make one vertex not safe from at most two elements of $\mathcal{S}$. During the whole game, Constructor -- and therefore Blocker as well -- will play less than $3n$ edges.
		Therefore, at any time during Stage III, the number of pairs $(u,S)$ which are unavailable for Constructor is upper bounded by
		\begin{align*}
			|\{(u,S) \,|\, u \text{ isolated in }\C, S \in \mathcal{S}, u\text{ not safe from } S\}| & = \sum_{S \in \mathcal{S}}\sum_{\ u \text{ isolated in $\C$}}\sum_{\ uv \in E(\B)} 1_{\{v \in S\}}\\
			& = \sum_{uv \in E(\B)}\sum_{\ S \in \mathcal{S}} 1_{\{u \text{ isolated in }\C\}} 1_{\{v \in S\}}\\
			& \leq 2 \cdot |E(\B)|\\
			& \leq 6n \, .
		\end{align*}
		
		However, during Stage III, the following holds true:
		$$
		|\{(u,S) \,|\, u \text{ isolated in }\C, S \in \mathcal{S}\}| \geq \frac{n}{105}\cdot (6\times 105+1) = 6n + \frac{n}{105} > 6n \, .
		$$	
		Thus, Constructor can always find a pair $(u,S)$ with $u$ isolated in $\C$ and safe from $S$, and construct three new triangles with $u$.
		
		Since Constructor can follow her strategy, at the end of the game she used $n-o(n)$ vertices which each created 3 triangles, thus the number of triangles is asymptotically equivalent to $3n$.	
	\end{proof}
	
	\medskip
	
	\subsection{The Embedded Constructor-Blocker game}
	
	This section is devoted to the proof of Theorem \ref{thm : ECB}. Since here our bounds do not match, we split the proof into two parts, corresponding to the lower bound (Proposition~\ref{prop : lower bound g_{ECB}}) and the upper bound (Proposition~\ref{prop : upper bound g_{ECB}}). As before, we first present the main ideas for both player's strategies to achieve their respective bounds, as well as a result for the non-game version to compare our results to.
	
	\medskip
	
	To get this aforementioned comparison for our result, let us briefly look at the maximum number of triangles a planar graph embedded with vertex set $\mathbb{U}_n$ (the $n^{\text{th}}$ roots of unity) can have. Such a graph is an outerplanar graph, that is a planar graph where all vertices are on the same face, and this question reduces to the maximum number of triangles in an outerplanar graph with $n$ vertices.
	%
	For such graphs it is known that these can have at most $n-2$ triangles.
	
	\medskip
	
	Now let us talk about strategies. Constructor will play a strategy in two phases. First, she claims edges between vertices that are (almost) consecutive and thus she creates a polygon of size between $\frac{n}{2}$ and $n$. Then she builds many triangles inside of this polygon, basically by taking edges between vertices that are close to each other.
	
	\medskip
	
	Considering a strategy for Blocker, he can claim around $\frac{n}{3}$ distinct outer edges (i.e.~edges between two consecutive vertices). At the end of the game, we can contract these edges (i.e.~merge their endvertices) without losing any triangle of $\C$. With this operation we end up with a graph on at most $\frac{2n}{3}$ vertices, thus the number of triangles is at most $\frac{2n}{3}-2$.
	
	\medskip
	
	In both strategies, the outer edges seem crucial. Our intuition is the following: Whenever Constructor plays some edge $e$, all the edges crossing $e$ become useless since Constructor cannot play them any more. Thus Blocker does not have to block all these edges and can thus focus on somewhere else. The further away the endvertices of the edge $e$ are, the more edges it crosses and thus the more it helps Blocker in his job of blocking many triangles. In the same vein, Constructor wants the endvertices to be close to each other to not destroy too many potential triangles.
	
	\medskip
	
	Now, let us present a strategy for Constructor for the lower bound. Therefore, let us state Constructor's side as a new proposition:
	
	\begin{prop}\label{prop : lower bound g_{ECB}}
		Constructor has a strategy to ensure that the following holds:
		$$
		g_{\mathrm{ECB}}(n,K_3) \geq (1+o(1))\frac{n}{2} \, .
		$$
	\end{prop}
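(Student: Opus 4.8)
The plan is to give Constructor a two-phase strategy. Write $v_0,\dots,v_{n-1}$ for the vertices in the cyclic order in which they appear on the unit circle. In the first phase Constructor builds a large \emph{polygon} --- a cycle in $\C$ all of whose edges are short chords --- and in the second phase she triangulates the interior of this polygon.

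In Phase~I, Constructor walks around the circle. She keeps a current vertex $u$, initially $u=v_0$. On each of her turns, writing $u=v_i$, she takes the smallest $k\geq 1$ for which $v_iv_{i+k}$ (indices mod $n$) has not yet been claimed by Blocker, claims $v_iv_{i+k}$, and moves to $u:=v_{i+k}$, so that $v_{i+1},\dots,v_{i+k-1}$ are skipped. Since every move is a short forward chord, her edges are pairwise non-crossing and this play is always legal. She continues until $u$ has travelled all the way around back to $v_0$, closing the cycle with a last edge if it is free (failing to close it costs at most one triangle and is irrelevant). Let $P$ be the resulting polygon, on a vertex set $S$ of size $m$. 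The crucial point is $m\geq n/2$: each skipped vertex $v_{i+j}$ witnesses a distinct Blocker edge $v_iv_{i+j}$, so the number $n-m$ of skipped vertices is at most the number of moves Blocker made during Phase~I, which is its length $m$; hence $n-m\leq m$.

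In Phase~II Constructor owns the cycle $P$, whose vertices are in convex position, and her goal is to add non-crossing chords so that her graph ends up with $m-o(n)$ triangular faces. She does this by an adaptive ear-cutting procedure: she maintains a current sub-polygon $Q$ (initially $Q=P$), and on each turn she cuts off an ear of $Q$ by claiming a short diagonal $q_{j-1}q_{j+1}$, which completes exactly one new triangle and deletes $q_j$ from $Q$. If Blocker has just blocked the short diagonal she intended to use, she instead cuts the ear at a neighbouring vertex of $Q$; that diagonal shares an endpoint with the blocked one, so Blocker's move does not obstruct it, and the recursion stays ``linear'' (a single shrinking sub-polygon), so the only triangles she can lose are the bounded number at the very end if $Q$ reaches a quadrilateral both of whose diagonals have been pre-emptively blocked. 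Equivalently, one may phrase Phase~II as Constructor maintaining a target triangulation of $P$ and performing a local diagonal flip whenever Blocker takes one of its chords; a flip changes neither the number of chords nor the number of triangles, and the flipped-in chord crosses nothing else in the target. Since the whole game lasts only $O(n)$ rounds, a counting argument should show that Blocker can force only $o(n)$ of these re-routings to fail, so Constructor finishes with at least $m-o(n)\geq n/2-o(n) = (1+o(1))\tfrac n2$ triangles.

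The routine part is Phase~I, essentially just the inequality $n-m\le m$. The main obstacle is making Phase~II rigorous: one must verify that at every stage Constructor has an admissible ear (equivalently, an admissible flip) available, and bound by $o(n)$ the number of ears she is forced to abandon. The delicate point is that the short diagonals Constructor would like to use as detours can cross one another, so she must order her ear-cuts carefully --- for instance, always cutting the ear closest to Blocker's last move --- to keep the family of chords she actually claims non-crossing while still triangulating all but a sublinear portion of $P$; that bookkeeping is where the real work lies.
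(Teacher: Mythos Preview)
Your Phase~I is essentially the paper's, and your inequality $n-m\le m$ is correct. What you do not extract, and what the paper uses crucially, is the stronger fact that only $2m-n$ of Blocker's $m$ Phase~I moves can be chords of the polygon $\P$ (the other $n-m$ are your ``witnesses'', each touching a skipped vertex). Without this bound on $l_0$, the number of pre-blocked diagonals inside $\P$, your Phase~II is stuck.

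The real gap is Phase~II. Your assertion that Constructor obtains $m-o(n)$ triangles is false, not just unproved. Take the run where Blocker ignores the boundary entirely and spends every Phase~I move on a $1$-ear $v_iv_{i+2}$; then $m=n$, but at the start of Phase~II essentially \emph{all} $1$-ears are blocked, and neither your ``cut a neighbouring ear'' nor your ``flip a diagonal'' move is available: the neighbouring $1$-ear is blocked too, and the flipped-in diagonal is blocked too. In that scenario Constructor cannot get anywhere near $m-o(n)=n-o(n)$ triangles; she gets roughly $n/2$. So the loss is $\Theta(n)$, not $o(n)$, and your proposed counting argument cannot exist.

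The paper's Phase~II is genuinely different from what you sketch. Constructor does not try to stay on $1$-ears; instead she looks for the smallest $k$ such that some $k$-ear is free, and then either plays a $k$-ear that \emph{crosses} a long Blocker chord (case~(a)), or plays a $(k{+}1)$-ear and pairs the two free $k$-ears it cuts off (case~(b)). The point is that a $k$-ear with $k\ge2$ gains no triangle but cuts off at least $\frac{(k-1)(3k+2)}{2}\ge 2k$ Blocker chords from the sub-polygon; tracking the potential $(T_t,s_t,l_t)$ one gets $T_{t_f}\ge m-s_{t_f}-\tfrac12(l_0-l_{t_f})$, and now the Phase~I bound $l_0\le 2m-n$ converts this into $\tfrac n2-O(1)$. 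The trade-off ``one wasted vertex costs at most half a Blocker chord'' is the missing idea in your sketch.
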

	
	\begin{proof}[Proof of Proposition~\ref{prop : lower bound g_{ECB}}]
		We provide a strategy for Constructor in two stages:
		
		\medskip
		
		{\bf Stage I:} During the first $m$ turns of the game, Constructor creates a polygon $\P$ of size $m$ (where $\frac{n}2 \leq m \leq n$). Details of how she achieves this can be found in the strategy discussion. Afterwards, Constructor proceeds to Stage II.
		
		\medskip
		
		{\bf Stage II:}  Constructor creates a graph that is close to a triangulation of $\P$, i.e. she creates many triangles inside of $\P$. Details can be found in the strategy discussion. Once there are no more edges available, Constructor stops playing.
		
		\medskip
		
		{\bf Strategy discussion:}
		
		We first focus on Stage I, when Constructor wants to build a large polygon with not so many Blocker edges inside of it. Let us state and prove the following claim:
		
		\begin{claim}
			For any $n\geq 1$, Constructor can claim all (maybe except for one) edges of a polygon $\P$, that has $m \in \left[ \frac{n}{2},n \right]$ edges, and such that Blocker will have claimed at most $2m-n$ edges inside this polygon.
		\end{claim}
		
		\begin{claimproof}
			We give the precise strategy for Constructor to achieve this goal. First, we order the vertices $v_1,\dots,v_n$ clockwise. Then Constructor starts by playing $v_1v_2$, and then initialises the sets $V_C := \{v_1,v_2\}$ and $V_B := \emptyset$.
			
			Now, whenever Blocker plays $v_iv_j$ with $i<j$, if $v_j \notin V_B \cup V_C$, then we add it to $V_B$, otherwise we do not change the sets.
			
			Next, assume it is Constructor's turn and let $i:= \max\{j,v_j \in V_C\}$ and $k := \min\{j>i,v_j \notin V_B\}$. Constructor then plays $v_iv_k$ and we add $v_k$ to $V_C$. If at some point $\{j>i,v_j \notin V_B\} = \emptyset$, then Constructor plays $v_iv_1$ (if this edge was not taken by Blocker). For Stage II, we will consider that this edge is claimed by Blocker, if it is not the case then the number of triangles completed by Constructor might decrease by one. The polygon $\P$ is made of the vertices of $V_C$ that are connected in increasing order. An example is given in Figure \ref{fig : ECB : polygon}.
			
			\begin{figure}[h!]
				\centering
				\input{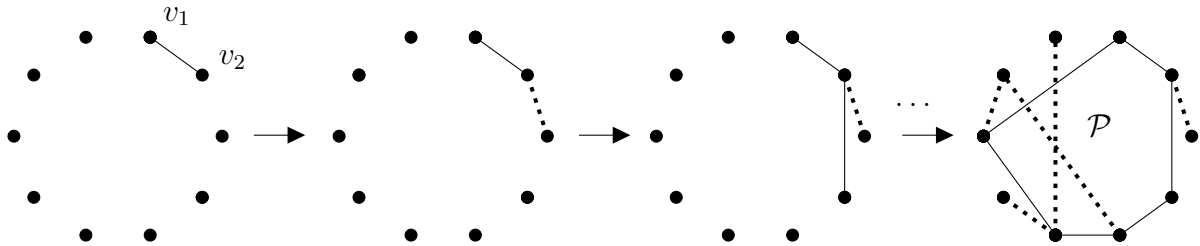}
				\caption{An example of Constructor creating her polygon, in the early stages of the process and at the end. Blocker's edges are displayed as dotted lines.}
				\label{fig : ECB : polygon}
			\end{figure}
			
			Now let us analyze the number of Blocker edges inside of $\P$ with regards to the size of $\P$.
			Denote by $m$ the number of edges -- and therefore of vertices -- in $\P$. Since $V(C) \cup V(B) = \mathbb{U}_n$ and $|V_B| \leq |V_C|$, we immediately have $m \geq \frac{n}{2}$. Moreover, the number of Blocker edges between vertices of $V_C$ is at most $m-(n-m) = 2m-n$. Indeed each player played $m$ edges and from these Blocker used $n-m$ moves to put all of the $n-m$ vertices into $V_B$ (thus he played edges that are outside of $\P$).
		\end{claimproof}
		
		\medskip
		
		We now explain how Constructor can (partially) triangulate $\P$ during Stage II.
		We forget about the vertices in $V_B$ and Blocker's edges that were adjacent to those vertices. We relabel the vertices of our polygon $\P_0 := \P$ as $v_1(0) < v_2(0)<\dots<v_m(0)$. Once both players will have played $t$ further moves, we will consider another polygon $\P_t$ (see below) of size $s_t$ and denote its vertices by $v_1(t)<\dots<v_{s_t}(t)$. We will also denote by $l_t$ the number of Blocker's edges between vertices of $\P_t$, and by $T_t$ the number of triangles in $\C$ at this time (when each player has played $m+t$ moves in total). Recall that $l_0 \leq 2m-n$.
		
		\medskip
		
		We will call a \emph{$k$-ear} of $\P_t$ an edge between $v_i(t)$ and $v_{i+k+1}(t)$ for some $i$. The strategy for Constructor when she has to play her $(t+1)^{th}$ move is the following:
		
		\medskip
		
		Assume that there is at least one Blocker edge and one available edge in $\P_t$. Let $k_t = k$ be the minimal integer such that all of the $1$-ears, $2$-ears,\dots, $(k-1)$-ears of $\P_t$ are blocked but not all of the $k$-ears. Two cases arise and will be illustrated in Figure \ref{fig : ECB : tiangulation}:
		\begin{itemize}
			\item[(a)] If there is a Blocker's edge in $\P_t$ that is not a $1$-ear,\dots,($k-1$)-ear, then there is an available $k$-ear that crosses such an edge, which Constructor can claim as her next move. Then she keeps playing in $\P_{t+1}$, the polygon with $s_{t+1}=s_t-k$ vertices that she just created.
			\item[(b)] Otherwise, Constructor plays any ($k+1$)-ear, say $v_1(t)v_{k+2}(t)$, and now consider the polygon $\P_{t+1}$ with $s_{t+1}=s_t-k-1$ vertices that she just created. Whenever Blocker plays inside of $\P_{t+1}$, Constructor answers according to her strategy, and whenever Blocker claims $v_1(t)v_{k+1}(t)$ or $v_2v_{k+2}(t)$ Constructor claims the other one of those edges, thus creating one triangle. We can therefore assume that the latter immediately happens without loss of generality.
		\end{itemize}
		
		\begin{figure}[h!]
			\centering
			\input{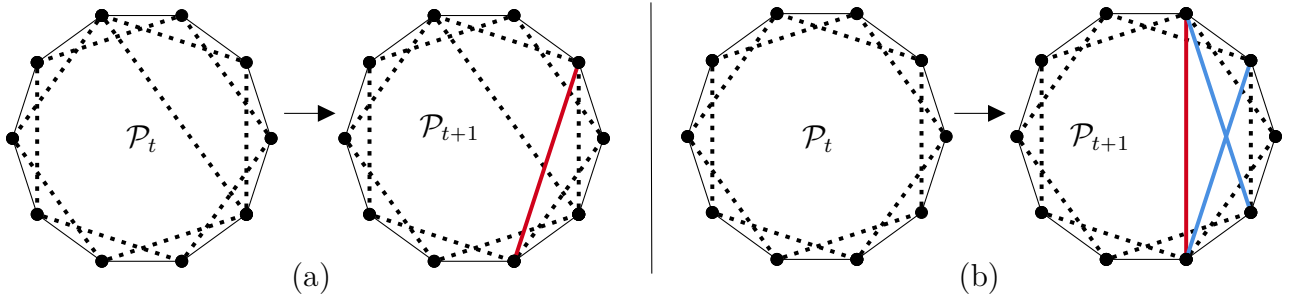}
			\vspace{-11pt}
			\caption{Constructor's strategy to triangulate her polygon. In both examples, $k=2$, Blocker's edges are depicted in dotted lines, and the red edge is Constructor's next move. In case (b), Constructor takes a ($k+1$)-ear, and pairs the blue edges, thus she will be able to create a triangle with one of them.}
			\label{fig : ECB : tiangulation}
		\end{figure}
		
		Constructor stops this strategy at $t_f$ when either all of the edges inside $\P_t$ are claimed by Blocker or all of the edges inside $\P_t$ are free. In the latter case, the following claim shows that Constructor then has a strategy to perfectly triangulate $\P_t$.

		\begin{claim}\label{clm : triangulate P_t}	
			If at time $t_f$ there is no Blocker edge in $\P_{t_f}$, then Constructor can perfectly triangulate $\P_{t_f}$ and thus create $s_{t_f}-2$ more triangles. 
		\end{claim}
		
		\begin{claimproof}
			We proceed by induction on $s_{t_f}$. If $s_{t_f}=4$, then, even if there is one Blocker edge, Constructor is sure to completely triangulate her polygon. If $s_{t_f}\geq 5$, then Constructor can claim any $1$-ear and build one triangle. Blocker will be able to play only one edge in $\P_{t+1}$ and Constructor will then claim a 1-ear crossing this edge, and so on.
		\end{claimproof}
		
		\medskip

		Let us compute the number of triangles $T_{t_f}$ next.
		
		\begin{claim}
			During the game, if Constructor follows her strategy, the number of triangles in her graph is at least
			$$
			T_{t_f} \geq m-s_{t_f} - \frac{l_0-l_{t_f}}{2} \, .
			$$
		\end{claim}
		
		\begin{claimproof}		
			Let us see how $T_t,s_t$, and $l_t$ evolve depending on the cases:
			\begin{itemize}
				\item[(a)] Let us say that the ear Constructor plays is $v_i(t)v_{i+k+1}(t)$. Then for all $1 \leq j \leq k-1$, the $k+j+1$ edges $v_{i-j}(t)v_{i+1}(t),\dots,v_{i+k}(t)v_{i+k+j+1}(t)$ plus the additional existing Blocker edge were in $\P_t$ but not in $\P_{t+1}$. In one move Blocker can claim one more edge in $\P_{t+1}$. Thus $l_t-l_{t+1} \geq \sum_{j=1}^k(k+j-1)+1-1 = \frac{(k-1)(3k+2)}{2}$. Therefore, in this case we get the following:
				$$
				\begin{matrix*}[l]
					T_{t+1} = T_t + 1_{\{k=1\}} \, ,\\
					s_{t+1} = s_t - k \, ,\\
					l_{t+1} \leq l_t - \frac{(k-1)(3k+2)}{2} \, .
				\end{matrix*}
				$$
				\item[(b)] Note that since in this situation there is at least one Blocker edge and $k\geq 2$. For all $1 \leq j \leq k-1$, the $k+j+2$ edges $v_{s_t-j+1}(t)v_{2}(t),\dots,v_{k+1}(t)v_{k+2+j}(t)$ were in $\P_t$ but not in $\P_{t+1}$. In one move Blocker can claim one more edge in $\P_{t+1}$. Therefore, in this case we get the following:
				$$
				\begin{matrix*}[l]
					T_{t+1} = T_t + 1 \, ,\\
					s_{t+1} = s_t - k-1 \, ,\\
					l_{t+1} \leq l_t - \frac{(k-1)(3k+4)}{2}+1 \leq l_t - \frac{(k-1)(3k+2)}{2} \, .
				\end{matrix*}
				$$
			\end{itemize}
			
			Thus, using telescopic sums we get the following:
			\begin{align*}
				l_0-l_{t_f} & = \sum_{t=0}^{t_f-1} \left(l_t-l_{t+1}\right) \\
				& \geq \sum_{t= 0}^{t_f-1} \frac{(k_t-1)(3k_t+2)}{2} = \sum_{t= 0}^{t_f-1} 1_{\{k_t\geq 2\}}\frac{(k_t-1)(3k_t+2)}{2} \\
				& \geq \sum_{t= 0}^{t_f-1} 1_{\{k_t\geq 2\}}\frac{4k_t}{2}
			\end{align*}
			and
			\begin{align*}
				m-s_{t_f}-T_{t_f} & = \sum_{t=0}^{t_f-1} \left(s_t-s_{t+1}+T_t-T_{t+1}\right) = \sum_{t=0}^{t_f-1} (k_t - 1_{\{k_t=1\}}) = \sum_{t=0}^{t_f-1} k_t1_{\{k_t\geq 2\}} \\
				& \leq \frac{l_0-l_{t_f}}{2} \, ,
			\end{align*}
			thus we have 
			$$
			T_{t_f} \geq m-s_{t_f} - \frac{l_0-l_{t_f}}{2} \, .
			$$
		\end{claimproof}
		
		\medskip
		
		Finally let us bound the number of triangles in $\C$ at the end of the game.
		
		\begin{itemize}
			\item If at time $t_f$ all the edges inside of $\P_{t_f}$ are claimed by Blocker, then
			$$
			l_{t_f} = \frac{s_{t_f}(s_{t_f}-1)}{2}-s_{t_f} = \frac{s_{t_f}(s_{t_f}-3)}{2}
			$$
			and
			$$
			T_{t_f} \geq m - \frac{l_0}{2} + s_{t_f}\cdot \frac{s_{t_f}-7}{4} \geq m-4-\frac{l_0}{2} \, .
			$$
			\item Otherwise, all the edges inside of $\P_{t_f}$ are free, and we know from Claim~\ref{clm : triangulate P_t} that Constructor can perfectly triangulate $\P_{t_f}$.
			Therefore, in this case, the number of triangles in $\C$ at the end of the game is at least $T_{t_f} + s_{t_f}-2 \geq m-\frac{l_0}{2}$.
		\end{itemize}

		In both cases, using $l_0 \leq 2m-n$, we get that the number of triangles in $\C$ at the end of the game is at least $\frac{n}{2}-4$ (actually $\frac{n}{2}-5$ if we consider the case where the polygon $\P$ was not actually closed).
	\end{proof}
	
	Let us switch to Blocker's side now. The upper bound we provide does not match our lower bound, but still gives a non-trivial upper bound on the number of triangles Constructor can create. We can state this as a new proposition:
	
	\begin{prop}\label{prop : upper bound g_{ECB}}
		Blocker has a strategy to ensure that the following holds:
		$$
		g_{\mathrm{ECB}}(n,K_3) \leq (1+o(1))\frac{2n}{3} \, .
		$$
	\end{prop}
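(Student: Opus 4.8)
The plan is to let Blocker reserve a linear‑sized \emph{matching} of boundary edges $v_iv_{i+1}$ (writing $v_1,\dots,v_n$ for the roots of unity in cyclic order), and then, once the game is over, contract these edges in $\C$. The point is that a boundary edge reserved by Blocker is not an edge of $\C$, so no triangle of $\C$ can use both of its endpoints; since moreover the reserved edges are pairwise disjoint, contracting them turns $\C$ into an \emph{outerplanar} graph on only about $\tfrac{2n}{3}$ vertices without decreasing the triangle count, and an outerplanar graph on $N$ vertices has at most $N-2$ triangles (the bound recorded just before this proposition).

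Setting this up, discard at most two leftover vertices and split the rest into consecutive triples $T_j=\{v_{3j+1},v_{3j+2},v_{3j+3}\}$; write $a_j=v_{3j+1}v_{3j+2}$ and $b_j=v_{3j+2}v_{3j+3}$ for the two \emph{inner} boundary edges of $T_j$, and note that choosing one edge out of each pair $\{a_j,b_j\}$ always yields a matching. Blocker plays greedily with the priorities: (1) if some triple has exactly one of its two inner edges claimed by Constructor and neither claimed by Blocker, claim the other inner edge of that triple; (2) otherwise, if there is a triple in which Blocker owns no inner edge and some inner edge is still free, claim such an edge; (3) otherwise play arbitrarily. First I would check that at any moment there is at most one triple of the kind targeted by rule (1): Constructor can create such a triple only on a turn where she claims an inner edge, and Blocker destroys it on his very next move. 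Hence Constructor can never hold both $a_j$ and $b_j$, so no triple ever has both inner edges taken by Constructor, and therefore Blocker claims a new inner edge on each of his turns until he owns one inner edge of every triple --- which happens within his first $\lfloor n/3\rfloor$ turns. In particular he ends up holding a matching $M$ of $\lfloor n/3\rfloor$ boundary edges. (If the game ends before Blocker finishes this, then Constructor has played fewer than $n/3$ edges, hence fewer than $n/3<\tfrac{2n}{3}$ triangles, and we are done.)

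It then remains to analyse $\C':=\C/M$. For each $v_pv_{p+1}\in M$ we have $v_pv_{p+1}\in\B$, so no triangle of $\C$ contains both $v_p$ and $v_{p+1}$; hence every triangle of $\C$ maps onto three distinct vertices of $\C'$ that still span a triangle there. For injectivity, suppose two distinct triangles of $\C$ have the same image. If they share an edge $xy$, their two differing third vertices form a matched pair, hence are consecutive on the circle; but two triangular faces of an outerplanar graph that share an edge lie on opposite sides of that edge, which forces their third vertices to be separated on the boundary circle by both endpoints of $xy$ --- impossible for consecutive vertices. The remaining cases (the two triangles differing in at least two vertices) are handled by the elementary observation that if $\{u,u'\}$ and $\{w,w'\}$ are each pairs of consecutive vertices on the circle, then one of the two pairings $\{uw,u'w'\}$, $\{uw',u'w\}$ consists of two crossing chords, together with the fact that the edges at the common vertex do not cross; this again contradicts that $\C$ is crossing‑free. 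Thus $\C$ and $\C'$ have the same number of triangles. Finally $\C$ is outerplanar (its vertices lie on the boundary circle and its edges do not cross), so its minor $\C'$ is outerplanar as well, and $\C'$ has $n-|M|\le\tfrac{2n}{3}+O(1)$ vertices; therefore $\C$ has at most $\tfrac{2n}{3}-2+O(1)=(1+o(1))\tfrac{2n}{3}$ triangles, as required.

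The part that needs the most care is the second paragraph: checking that Blocker's greedy priority rule genuinely forces Constructor to leave one inner edge free in every triple and that Blocker collects the whole matching before the game can end against him. The contraction argument of the third paragraph is short once one has the crossing observation, the injectivity case analysis being routine, and the outerplanar triangle bound is already available.
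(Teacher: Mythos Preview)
Your proof takes essentially the same approach as the paper's: Blocker secures a matching of about $n/3$ boundary edges (the paper groups the outer \emph{edges} into blocks of three rather than the vertices, a cosmetic difference) and then contracts them to pass to an outerplanar graph on about $2n/3$ vertices, invoking the $N-2$ triangle bound. You go further than the paper in arguing injectivity of the contraction map on triangles---the paper simply asserts that the triangle count does not increase---and your Case~A is clean; your Case~B sketch is terse (the ``crossing pair'' observation alone does not finish it, since the crossing pair need not lie in $\C$), but the claim is true and follows from the same ``consecutive vertices lie on the same side of any other chord'' fact you used in Case~A, so this is easily patched.
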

	
	\begin{proof}[Proof of Proposition~\ref{prop : upper bound g_{ECB}}]
		The strategy we present for Blocker consists of taking as many non-adjacent outer edges as possible. 
		We first count the number of such edges Blocker can claim.
		
		\begin{claim}
			Blocker can claim at least $\lfloor\frac{n}{3}\rfloor$ non-adjacent outer edges.
		\end{claim}
		
		\begin{claimproof}
			We give a strategy for Blocker.
			Assume that Constructor's first move is an outer edge (or else can pretend that she claimed an outer edge). Order the outer edges $\{1,\dots,n\}$ clockwise, the one claimed by Constructor being the $n^{th}$ one. Blocker then claims the $(n-1)^{th}$ edge. Now group the edges from 1 to $3\cdot \lfloor\frac{n-2}{3}\rfloor$ in groups of three, and assign a type to each edge, clockwise $a,b,c,a,b,c,\dots$
			\begin{itemize}
				\item Whenever Constructor plays an edge of type $a$ (resp. $b$), Blocker plays the adjacent edge of type $b$ (resp. $a$).
				\item If Constructor plays an edge of type $c$ or an inner edge, then Blocker plays an arbitrary edge of type $a$ or $b$ that is not adjacent to an edge he has already claimed. If at some point he would claim a particular edge of type $a$ or $b$ that he has already claimed, he plays an arbitrary edge of type $a$ or $b$ as well.
				\item When Blocker cannot play according to this strategy, he plays arbitrarily.
			\end{itemize}
			
			An example of this strategy is given in Figure \ref{ECB : B strat}. 	
			At the end of the game, Blocker will have an edge of type $a$ or $b$ in all blocks of three consecutive edges, thus he will have claimed $\lfloor\frac{n-2}{3}\rfloor + 1 \geq \lfloor\frac{n}{3}\rfloor$ non-adjacent outer edges.
		\end{claimproof}
		
		\medskip
		
		The next claim will now give us an upper bound on the number of triangles Constructor can still create when many non-adjacent outer edges are claimed by Blocker.
		
		\begin{claim}
			If Blocker has claimed $m$ non-adjacent outer edges, then Constructor can create at most $n-m-2$ triangles.
		\end{claim}
		
		\begin{claimproof}
			Let $e_1,\dots,e_m$ be the non-adjacent outer edges claimed by Blocker. The idea is to contract these edges to obtain a bound on the number of triangles in $\C$. We consider Constructor's graph $\C$ at the end of the game. Let $\hat{\C}$ be the graph obtained from $\C$ by contracting every edge $e_i$.
			
			Every triangle in $\C$ corresponds to a triangle in $\hat{\C}$, because it cannot have an edge $e_i$ as an edge. Thus the number of triangles in $\C$ is upper bounded by the number of triangles in $\hat{\C}$, which is at most $|V(\hat{\C})|-2$, hence $n-m-2$. Note that we use that the edges are not adjacent. Indeed if two edges $v_1v_2$ and $v_2v_3$ were adjacent, then contracting these edges would mean contracting $v_1v_3$ as well, but this edge might be claimed by $\C$ and inside a triangle.
		\end{claimproof}
		
		\medskip
		
		If we now combine both claims we get that
		$$
		g(n,K_3) \leq n - \lfloor\frac{n}{3}\rfloor - 2  \leq  \frac{2n}{3} \, .
		$$
	\end{proof}
	
	\begin{figure}[h!]
		\centering
		\input{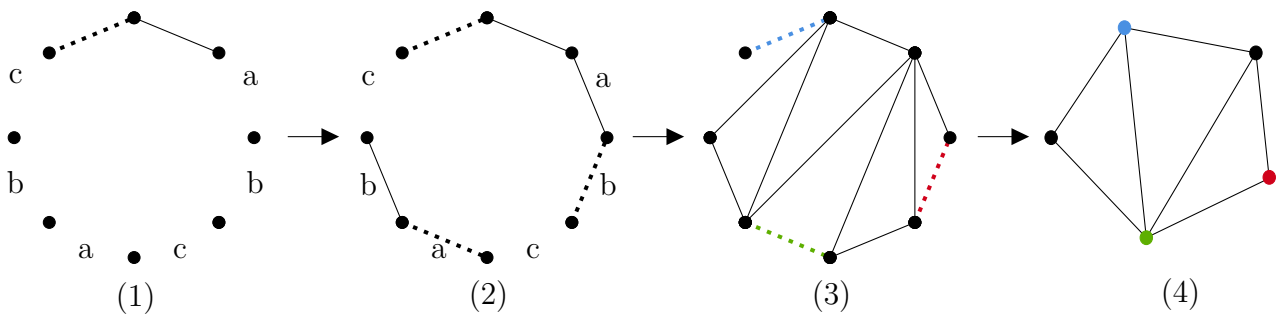}
		\caption{An example for Blocker's strategy for ECB. Constructor's edges are in solid lines and Blocker's edges are in dotted lines. (1) Labels of the outer edges after one move for each player; (2) Example of the graph later on; (3) Example of the graph at the end of the game; (4) Contraction of the Blocker edges.}
		\label{ECB : B strat}
	\end{figure}

	\section{Concluding remarks}
	
	In this paper, we studied the Constructor-Blocker game where Constructor wants to maximize the number of triangles in her graph while Blocker wants to minimize it. Figure~\ref{fig : table results} summarizes new and existing results, our contributions being in bold font.
	
	\begin{figure}[h!]
		\centering
		\begin{tabular}{|c|c|c|c|}
			\hline
			\rule[-10pt]{0pt}{25pt} $F$ & $ex(n,K_3,F)$ & $g(n,K_3,F)$ & ref. for $g(n,K_3,F)$\\
			\hline
			\noalign{\vskip 0pt}
			\hline
			\rule[-11pt]{0pt}{30pt} $\emptyset$ & $\frac{n^3}{6}$ & \boldmath$\frac{n^3}{48}$ & Thm \ref{thm : g(n,K_3,emptyset)}\\
			\hline
			\rule[-11pt]{0pt}{30pt} $P_5$ & $n$ & $\frac{n}{4}$ & \cite{patkos_constructor-blocker_2022}\\
			\hline
			\rule[-11pt]{0pt}{30pt} $P_6$ & $2n$ & \boldmath$\frac{n}{2}$ & Thm \ref{thm : g(n,K_3,P_6)}\\
			\hline
			\rule[-11pt]{0pt}{30pt} $S_4$ & $n$ & \boldmath $\frac{n}{3}$ & Thm \ref{thm : g(n,K_3,S_4)}\\
			\hline
			\rule[-11pt]{0pt}{30pt} $S_5$ & $2n$ & \boldmath$\frac{2n}{3} \leq \cdot \leq n$ & Thm \ref{thm : g(n,K_3,S_5)}\\
			\hline
			\rule[-11pt]{0pt}{30pt} $C_4$ & $\Theta(n^{3/2})$ & $ \Theta\left(n^{3/2}\text{e}^{-c\sqrt{\log(n)}}\right) \leq \cdot \leq O(n^{3/2})$ & \cite{balogh_constructor-blocker_2024}\\
			\hline
			\rule[-11pt]{0pt}{30pt} $F,\, \chi(F) = s>3$ & $\frac{(s-2)(s-3)}{6(s-1)^2}n^3$ & $\frac{(s-2)(s-3)}{48(s-1)^2}n^3$ & \cite{balogh_constructor-blocker_2024}\\ 
			\hline\noalign{\vskip 5pt}
			\hline
			\rule[-11pt]{0pt}{30pt} game & max. number of triangles & $g_{\text{game}}(n,K_3)$ & ref.\\
			\hline
			\noalign{\vskip 0pt}
			\hline
			\rule[-11pt]{0pt}{30pt} PCB & $3n$ & \boldmath$3n$ & Thm \ref{thm : PCB}\\
			\hline
			\rule[-11pt]{0pt}{30pt} ECB & $n$ & \boldmath$\frac{n}{2} \leq \cdot \leq \frac{2n}{3}$ & Thm \ref{thm : ECB}\\
			\hline
		\end{tabular}
		\caption{Recap table of asymptotic results}
		\label{fig : table results}
	\end{figure}
	
	Despite some similarities in the methods and strategies for the different forbidden subgraphs, we were not able to provide very generic statements that would handle many cases of $H$ and $F$. This would be really interesting and provide a deeper understanding of this problem. More precisely, we would be very interested in finding some strategies leading to the asymptotics of $g(n,K_3,S_k)$ and $g(n,K_3,P_k)$ for a general $k$. We would also like to close the gaps between the bounds for $g(n,K_3,S_5)$ and $g_\text{ECB}(n,K_3)$.
	
	Many other questions remain open. For instance, it is common in positional games to introduce a bias. Instead of the two players each claiming one edge during their turns, we can allow Constructor to claim $a$ edges and Blocker to play $b$ edges. This bias could change the score of the game, even if $a=b$ a priori. It would be interesting to see how the score of the game evolves with $a$ and $b$.
	
	Finally, even without improving the bounds, we would like to have nicer strategies, especially for Blocker. In most of the proofs, we describe the strategies for Blocker very locally, with a case study, but we think there could be easier strategies for Blocker giving the same results.

	\bibliographystyle{amsplain}
	\bibliography{references}

\end{document}